\def\wt{\widetilde}
\def\wh{\widehat}
\def\ov{\overline}
\def \im{{\rm Im\,}}
 \def\up{\upharpoonright}
\def\cA{\mathcal A} \def\cH{\mathcal H}  \def\cB{\mathcal B} \def\cC{\mathcal C}
\def\cD{\mathcal D} \def\cF {\mathcal F}   
\def\cK{\mathcal K} \def\cL{\mathcal L}
 \def\cN{\mathcal N} 
  \def\cS{\mathcal S}
  \def\cI{\mathcal I}
\def\B{\mbox{\boldmath$B$}}
\def \gH{\mathfrak H}   \def \gN{\mathfrak N}
\def \bC{\mathbb C} \def \bN{\mathbb N} \def \bZ{\mathbb Z}
\def\bR{\mathbb R}
\def \l{\lambda}
\def \a{\alpha} \def \b{\beta}   \def \L{\Lambda}  \def \s{\sigma} \def \t{\theta}
 \def\g {\gamma}
\def\d {\delta}
\def \f{\varphi} \def\D {\Delta} 
 \def \G{\Gamma} 
\def \C{\widetilde {\mathcal C}}
 \def \CAt {C(\wt A_\tau)}
\def \cd {\cdot}
\def\Rm {R_{\rm mer}} \def\RM {\wt R_{\rm mer}}
\def\nt {\wt\gN_t}  
\def\EP {{\rm ENP}(\cH)}
\def\lI {\cL_\Delta^2(\cI)} \def\LI {L_\Delta^2(\cI)}
\def\AC {AC(\cI)}
\def\lS {\cL^2(\xi; \bC^r)} \def\LS {L^2(\xi ; \bC^r)}
   \def\Tmi{T_{\min}}
  \def\Sma{S_{\max}} \def\Smi{S_{\min}}
\def\Dma{\cD_{\rm max}} \def\Dmi{\cD_{\rm min}}
\def \ex { {\rm ext} (A)} \def \cex {\overline {\rm ext }(A)}
\def \SA {S_{\wt A}}
\def \dom {{\rm dom}\,}  \def \ran {{\rm ran}\,}  \def \ker{{\rm
ker\,}}
 \def \mul {{\rm mul}\,}
\def  \RH {\wt R (\cH)}  \def  \RCH {\wt R_c (\cH)}
\def \CR {\bC\setminus\bR}
\def\bt{\{\cH,\G_0,\G_1\}}
\newcommand{\Romannumeral}[1]{\uppercase\expandafter{\romannumeral #1\relax}}
\newtheorem{theorem}{Theorem}[section]
\newtheorem{proposition}[theorem]{Proposition}
\newtheorem{corollary}[theorem]{Corollary}
\newtheorem{lemma}[theorem]{Lemma}
\newtheorem{assertion}[theorem]{Assertion}
\theoremstyle{definition}
\theoremstyle{definition}
\newtheorem {definition} [theorem]{Definition}
\theoremstyle{remark}
\newtheorem{remark}[theorem]{Remark}
\numberwithin{equation}{section}
\begin{document}
\title[On eigenfunction expansions ]
{On eigenfunction expansions of differential equations with degenerating  weight}
\author{Vadim  Mogilevskii}
\address{Department of Mathematical Analysis and Informatics, Poltava National V.G. Korolenko Pedagogical University,  Ostrogradski Str. 2, 36000 Poltava, Ukraine }
\email{vadim.mogilevskii@gmail.com}

\subjclass[2010]{34B09,34B40,34L10,47A06,47A20,47B25}
\keywords{differential equation,  eigenvalue problem, eigenfunction expansion, selfadjoint extension, Shtraus family}
\begin{abstract}
Let  $A$ be a symmetric  operator. By using the method of boundary triplets we parameterize in  terms of a Nevanlinna parameter $\tau$ all exit space extensions $\wt A=\wt A^*$ of $A$ with the discrete spectrum $\s(\wt A)$ and  characterize the Shtraus family of $\wt A$  in terms of abstract boundary conditions.  Next we apply these results to the eigenvalue problem for the $2r$-th order differential equation $l[y]= \l \D(x)y$ on an interval $[a,b), \; -\infty <a<b\leq \infty,$ subject to $\l$-depending separated boundary conditions with entire operator-functions $C_0(\l) $ and $C_1(\l)$, which form a Nevanlinna pair $(C_0,C_1)$. The weight $\D(x)$ is nonnegative and may vanish on some intervals $(\a,\b)\subset \cI$.  We show that in the case when the minimal  operator  of the equation has the discrete spectrum (in particular, in the case of the quasiregular equation) the set of eigenvalues of the eigenvalue  problem is an infinite  subset of $\bR$ without finite limit points and  each function $y\in\LI$ admits the eigenfunction expansion $y(x)=\sum_{k=1}^\infty y_k (x)$ converging in $\LI$. Moreover,  we give an explicit method for calculation of eigenfunctions  $y_k$  in this expansion  and specify boundary conditions on $y$ implying the uniform convergence of the eigenfunction expansion of y. These results develop the known  ones obtained for the case of the positive weight $\D$ and the more restrictive class of $\l$-depending boundary conditions.
\end{abstract}
\maketitle
\section{Introduction}
We study the eigenvalue problem  for the differential equation of an even order $2r$
\begin{gather}\label{1.1}
l[y]= \sum_{k=0}^r  (-1)^k \left( p_{r-k}(x)y^{(k)}\right)^{(k)}=\l \D(x)y,\quad x\in \cI=[a,b\rangle , \quad -\infty <a<b\leq \infty
\end{gather}
subject to separated boundary conditions
\begin{gather}\label{1.2}
(\cos B) y^{(1)}(a)+(\sin B) y^{(2)}(a)=0, \qquad C_0(\l)\G_{0b}y+C_1 (\l) \G_{1b}y =0
\end{gather}
depending on the parameter $\l\in\bC$.  It is assumed that the coefficients $p_j$ and the weight $\D$ in \eqref{1.1} are real-valued functions on an interval $\cI=[a,b\rangle $ such that $\D(x)\geq 0$ a.e.on $\cI$ and $p_0^{-1}, \; p_1, \dots, p_r, \D$ are  integrable on each compact interval $[a,b']\subset \cI$ (the latter means that the endpoint $a$ is regular). Below we denote by $\B (\bC^m)$ the set of all linear operators in $\bC^m$ or equivalently the set of all $m\times m$-matrices. In \eqref{1.2} $B=B^*\in\B (\bC^r)$,  $y^{(1)}$ and $ y^{(2)}$ are vectors of quasi-derivatives  of a function $y$, $\G_{jb}y\in\bC^m$  are singular boundary values of $y^{(j)}$ at the endpoint $b$ and $C_0, C_1$ are entire $\B (\bC^{m})$-valued functions which form a Nevanlinna pair $(C_0,C_1)$. The particular case of \eqref{1.2} are  the following selfadjoint separated boundary conditions with $B_1=B_1^*\in\B (\bC^{d-r})$:
\begin{gather}\label{1.3}
(\cos B) y^{(1)}(a)+(\sin B) y^{(2)}(a)=0, \qquad (\cos B_1)\G_{0b}y +(\sin B_1) \G_{1b}y =0
\end{gather}
Note that eigenfunction expansions generated by problems of the type \eqref{1.1}, \eqref{1.2} have been studied by many authors (see, e.g., \cite{Bin02,Dij80,Ful77,Hin79,Wal73} and their references).

Denote by $\LI$ the Hilbert space of all  functions $f$ on $\cI$ such that $\int\limits_{\cI}\D (x)|f(x)|^2 \, dx <\infty$ and by $(\cd,\cd)_\D$ the inner product in $\LI$. Recall that equation \eqref{1.1} is called quasiregular if each solution $y$ of \eqref{1.1} belongs to $\LI$ and regular if $\cI=[a,b]$ is a compact interval (clearly, each regular equation is quasiregular). For regular equation one can put in \eqref{1.2} and \eqref{1.3} $\G_{0b}y =y^{(1)}(b)$ and $\G_{1b}y =y^{(2)}(b)$.
\begin{definition}\label{def1.0}
The weight  $\D$ in \eqref{1.1} is called  positive, if $\D(x)>0$ a.e. on $\cI$ and nontrivial,  if the set $\{x\in\cI:\D(x)>0\}$ has the positive Borel measure.
\end{definition}
Clearly, non triviality  is the weakest restriction on $\D$, which saves the interest to studying of \eqref{1.1}.

According to \cite{Mog20}  boundary problem \eqref{1.1}, \eqref{1.3} for equation \eqref{1.1} with the nontrivial weight  $\D$ generates an  operator $T=T^*$ in $\LI$ (for the case of the positive Weight see e.g. \cite{Wei}). Equation \eqref{1.1} is said to have the discrete spectrum if for some (and hence any) boundary problem \eqref{1.1}, \eqref{1.3} the   operator  $T$ has the discrete spectrum.

An eigenvalue of the problem \eqref{1.1}, \eqref{1.2} is defined as $\l\in\bC$  for which this problem has a solution $y\in\LI,\, y\neq 0$; this solution is called an eigenfunction.  The following eigenfunction expansion theorem is the classical result for the selfadjoint  eigenvalue problem \eqref{1.1}, \eqref{1.3} (see e.g \cite{Col,DunSch}).
\begin{theorem}\label{th1.1}
Assume that equation \eqref{1.1} with the positive weight $\D$ has the discrete spectrum. Then:

{\rm (i)} The set  of all eigenvalues of the problem \eqref{1.1}, \eqref{1.3}  is an infinite  countable subset $\{t_k\}$ of $\bR$ without finite limit points. Moreover, eigenfunctions for different $t_k$ are mutually orthogonal in $\LI$.

{\rm (ii)} Each absolutely continuous  function $y\in\LI$ with absolutely continuous quasiderivatives $y^{[k]}$ satisfying   $\D^{-1}l[y]\in\LI$ and the boundary conditions \eqref{1.3} admits the eigenfunction expansion
\begin{gather}\label{1.4}
y(x)=\sum_{k=1}^{\infty}y_k(x),
\end{gather}
which converges absolutely and uniformly on each compact interval $[a,b']\subset \cI$. In \eqref{1.4} $y_k(x)=(y,v_k)_\D v_k(x)$, where $\{v_k\}$ is an orthonormal  system of eigenfunctions.
\end{theorem}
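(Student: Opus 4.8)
The plan is to reduce everything to the spectral theory of the self-adjoint operator $T=T^*$ in $\LI$ generated by the selfadjoint boundary problem \eqref{1.1}, \eqref{1.3}; its existence for the nontrivial, hence in particular positive, weight $\D$ is guaranteed by \cite{Mog20} (cf. \cite{Wei} for the classical positive case), and by construction $\l$ is an eigenvalue of \eqref{1.1}, \eqref{1.3} with eigenfunction $y$ exactly when $y\in\dom T$ and $Ty=\l y$.

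First I would dispose of part (i) together with the $\LI$-convergence in (ii). By hypothesis $T$ has discrete spectrum, so $\s(T)$ is a set of isolated real eigenvalues of finite multiplicity without finite accumulation point, and $\LI$ possesses an orthonormal basis $\{v_k\}$ of eigenfunctions, $Tv_k=t_k v_k$. Since $\D>0$ a.e., $\LI$ is infinite-dimensional, so the list $\{t_k\}$ is necessarily infinite; mutual orthogonality of eigenfunctions attached to distinct $t_k$ is immediate from $T=T^*$. Expanding an arbitrary $y\in\LI$ in the basis $\{v_k\}$ gives \eqref{1.4} with $y_k=(y,v_k)_\D v_k$ and convergence in $\LI$.

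The only substantial point is the absolute and locally uniform convergence in (ii) under the extra hypotheses on $y$, which are precisely the conditions for $y\in\dom T$. Fix a real $\mu\notin\{t_k\}$. The crux is the estimate
\[
m(x):=\sum_{k}\frac{|v_k(x)|^2}{(t_k-\mu)^2}\le C(x)^2,\qquad x\in\cI,
\]
with $C(x)$ locally bounded on $\cI$: since $a$ is regular, the resolvent $(T-\mu)^{-1}$ of the problem \eqref{1.1}, \eqref{1.3} is an integral operator with Green function $G_\mu(\cd,\cd)$ built from a fundamental system of solutions of $l[y]=\mu\D y$, the discreteness of $\s(T)$ ensures $G_\mu(x,\cd)\in\LI$ near $b$, the map $x\mapsto G_\mu(x,\cd)$ is continuous from $\cI$ into $\LI$, and $m(x)=\|G_\mu(x,\cd)\|_\D^2$; equivalently one may invoke that point evaluation is bounded on $\dom T$ equipped with the graph norm, with a locally bounded constant, and apply it to $(T-\mu)^{-1}g$. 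Granting this, set $g:=(T-\mu)y\in\LI$; self-adjointness gives $(y,v_k)_\D=(t_k-\mu)^{-1}(g,v_k)_\D$, so Cauchy--Schwarz yields
\[
\sum_{k>N}|(y,v_k)_\D|\,|v_k(x)|\le\Bigl(\sum_{k>N}|(g,v_k)_\D|^2\Bigr)^{1/2}m(x)^{1/2}.
\]
The first factor is the tail of $\sum_k|(g,v_k)_\D|^2=\|g\|_\D^2$ and tends to $0$ independently of $x$, while $m(x)^{1/2}\le C(x)$ is bounded on each $[a,b']\subset\cI$; this gives simultaneously the absolute convergence of \eqref{1.4} and its uniform convergence on $[a,b']$, and the sum equals $y$ because the series already converges to $y$ in $\LI$.

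The main obstacle is establishing the displayed estimate on $m(x)$ --- i.e. that the resolvent of $T$ is realized by a Green function continuous in $x$ with locally bounded $\LI$-norm, equivalently the local boundedness of $x\mapsto\sum_k(t_k-\mu)^{-2}|v_k(x)|^2$. For the $2r$-th order equation this rests on the explicit construction of $(T-\mu)^{-1}$ from a fundamental system of solutions of $l[y]=\mu\D y$ on $\cI$, the regularity of the left endpoint $a$, and the square-integrable/boundary behaviour at $b$ forced by $\s(T)$ being discrete; the remaining steps are the routine Hilbert-space argument above.
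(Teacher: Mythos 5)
Your argument is essentially correct, but it follows the classical route of the sources the paper itself cites for this theorem (Collatz, Dunford--Schwartz, Naimark) rather than the paper's own machinery. You reduce everything to the spectral theorem for the self-adjoint operator $T$ with discrete spectrum, which immediately gives (i) and the $\LI$-convergence in (ii), and you obtain the absolute and locally uniform convergence from the Bessel/Parseval bound $\sum_k|v_k(x)|^2(t_k-\mu)^{-2}\le\|G_\mu(x,\cd)\|_\D^2$ together with Cauchy--Schwarz applied to $g=(T-\mu)y$; the only step you do not actually carry out is the construction of the Green kernel $G_\mu$ with $G_\mu(x,\cd)\in\LI$ and $x\mapsto G_\mu(x,\cd)$ continuous, which you correctly identify as the crux --- for positive weight and a regular endpoint $a$ this is the standard (nontrivial but classical) resolvent representation, so there is no substantive error, only an appeal to known facts. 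The paper proceeds quite differently: it never proves Theorem \ref{th1.1} directly, but recovers and extends it (Corollary \ref{cor5.19.1}, via Theorems \ref{th5.12}, \ref{th5.17}, \ref{th5.19}) through boundary triplets, exit-space self-adjoint extensions $\wt A_\tau$ and their Shtraus families, discrete spectral functions $\xi=\{F,\Xi\}$ of the Weyl-type function $m$, and the uniform-convergence theorem for the inverse generalized Fourier transform from \cite{Mog20}. Your approach buys a short, self-contained argument in the positive-weight self-adjoint case, including orthogonality of the $y_k$; the paper's approach buys the generalizations that are its actual point --- degenerate (merely nontrivial) weight, $\l$-dependent Nevanlinna-pair boundary conditions, explicit residue formulas for the eigenfunctions $y_k$, and uniform convergence under boundary conditions encoded by the relation $\eta_C$ --- none of which are accessible by the Green-kernel/orthonormal-basis argument, since in that setting the expansion is no longer orthogonal and the relevant operator lives in an exit space.
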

F. Atkinson in \cite[Theorem 8.9.1]{Atk} extended Theorem \ref{th1.1} to regular Sturm-Liouville equations
\begin{gather}\label{1.6}
l[y]=-(p(x)y')' + q(x)y=\l \D(x)y,\quad x\in \cI=[a,b\rangle
\end{gather}
such that $0\leq p(x)\leq \infty$ and the  following conditions are satisfied: (i) $\D(x)\geq 0$ a.e. on $\cI$; (ii) there is no  interval $(a,b')\subset\cI$ such that $\D(x)=0$ a.e. on  $(a,b')$ and there is no interval $(a',b)\subset \cI$ such that $\D(x)=0$ a.e. on $(a',b)$; (iii) if $\D(x)=0$ a.e. on an interval $(a',b')\subset \cI$, then $q(x)=0 $ a.e. on $(a',b')$.

Eigenfunction expansions for the regular Sturm-Liouville  equations \eqref{1.6} with the positive weight $\D$ subject to the $\l$-depending boundary conditions of the special form
\begin{gather}\label{1.7}
\cos B\cd y(a)+\sin B\cd (py')(a)=0, \quad  (M_0-\l N_0)y(b) + (M_1-\l N_1)\cd (py')(b) =0
\end{gather}
were studied in the papers \cite{Wal73,Ful77,Hin79}. It is assumed there that $p>0$ and the coefficients $M_j$ and $N_j$ in \eqref{1.7} are such that $(M_0-\l N_0)(M_1-\l N_1)^{-1}$ is a Nevanlinna function. In these papers problem \eqref{1.6}, \eqref{1.7} is associated with a certain self-adjoint operator $\wt A$ in a Hilbert space $H\supset \LI$. It follows from the results of \cite{Ful77,Hin79} that eigenvalues of the problem  \eqref{1.6}, \eqref{1.7} form a strictly increasing unbounded sequence $\{\l_k\}, \; \l_k\in\bR,$  and each function $y\in\AC$  such that  $py'\in\AC$  and $\D^{-1}l[y]\in\LI$ admits the eigenfunction expansion  \eqref{1.4} converging absolutely and uniformly on $\cI$. The results of \cite{Ful77,Hin79} were extended to boundary problems for  regular equations  \eqref{1.1} of an order $2r$ with the positive weight $\D$ subject to linear  boundary conditions of the type \eqref{1.7} in \cite{Bin02} and boundary conditions \eqref{1.2} with polynomial matrices $C_j(\l)$ in \cite{Dij80} . Note that eigenfunctions of the problem \eqref{1.6}, \eqref{1.7}  are not mutually orthogonal and the method of  \cite{Bin02,Dij80,Ful77,Hin79} does not give explicit formulas for calculation of $y_k$ in \eqref{1.4}.

In the present paper the above results are developed  in the following two directions:

(i) Instead of positivity we assume that the weight $\D\geq 0$ in \eqref{1.1} is nontrivial, so that  the equality $\D=0$ may holds on some intervals $(\a,\b)\subset \cI$.

(ii) It is assumed that $C_j(\l)$ in the boundary conditions \eqref{1.2} are entire operator functions which form a Nevanlinna pair $(C_0,C_1)$.  This assumption generalizes the known results  to a wider class of $\l$-depending boundary conditions.

In the case of the Sturm-Liouville equation \eqref{1.6} our main results can be formulated in the form of the following two theorems.
\begin{theorem}\label{th1.2}
Assume that  equation \eqref{1.6} with the nontrivial weight $\D$  is  quasiregular. Let $\Dma$ be the set of all functions $y\in\AC\cap\LI$ such that $y^{[1]}:=py'\in\AC$ and $-(y^{[1]})'+q y=\D(x) f_y$ (a.e.on $\cI$) with some $f_y\in\LI$. For a given $B\in\bR$ denote by $\f_B(\cd,\l)$ and $\psi_B(\cd,\l)$ the solutions of \eqref{1.6} with $\f_B(a,\l)=\sin B, \; \f_B^{[1]}(a,\l)=-\cos B$ and $\psi_B(a,\l)=\cos B, \; \psi_B^{[1]}(a,\l)=\sin B$. Denote also by $\G_{0b}y$ and $\G_{1b}y$ singular boundary values of a function $y\in\Dma$ at the endpoint $b$ given by
\begin{gather*}
\G_{0b}y=\lim_{x\to b} (\psi_B^{[1]}(x,0)y(x) - \psi_B(x,0)y^{[1]}(x)),\;\;
\G_{1b}y=\lim_{x\to b} (-\f_B^{[1]}(x,0)y(x) + \f_B(x,0)y^{[1]}(x)).
\end{gather*}
Let $C=(C_0,C_1)$ be an entire Nevanlinna pair, i.e., a pair of entire functions $C_0$ and $C_1$ without common zeros and such that
\begin{gather*}
 \quad \im \l \cd \im (C_1(\l)\ov{C_0(\l)})\geq 0, \quad C_1(\ov \l)\ov{C_0(\l)}-\ov {C_1( \l)} C_0 (\ov\l)=0,\quad \l\in\CR.
\end{gather*}
Consider the eigenvalue problem for equation \eqref{1.6} subject to the boundary conditions
\begin{gather}\label{1.11}
\cos B\cd y(a)+\sin B\cd y^{[1]}(a)=0, \qquad C_0(\l)\G_{0b}y +C_1(\l)\G_{1b}y=0.
\end{gather}
and let $ EV$ be the set of all eigenfunctions of this problem. Then:

{\rm (i)} $ EV=\{t_k\}$ is an infinite countable subset of $\bR$ without finite limit points and the linear-fractional transform
\begin{gather} \label{1.12}
m(\l)= \frac
{w_2(\l)C_0(\l)+w_4(\l)C_1(\l)} {w_1(\l)C_0(\l)+w_3(\l)C_1(\l)}, \quad \l\in\CR
\end{gather}
with the coefficients
\begin{gather*}
w_1(\l)=1+\l\smallint_\cI \psi_B(x,0)\D(x) \f_B(x,\l)\,dx, \qquad w_2(\l)=\l\smallint_\cI \psi_B(x,0)\D(x) \psi_B(x,\l)\,dx\\
w_3(\l)=-\l\smallint_\cI \f_B(x,0)\D(x) \f_B(x,\l)\,dx, \qquad w_4(\l)=1-\l\smallint_\cI \f_B(x,0)\D(x) \psi_B(x,\l)\,dx.
\end{gather*}
defines a meromorphic Nevanlinna function $m$ such that $ EV$ coincides with the set of all poles of $m$.

{\rm (ii)} Assume that  $y\in\LI$, $\wh y_k=\int\limits_\cI \f_B(x,t_k)\D(x)y(x)\, dx$ is the Fourier coefficient of $y$ and $\wh\xi_k$ is the residue of $m$ at the pole $t_k$. Then the equality
\begin{gather}\label{1.12.1}
y_k(x)=-\wh \xi_k\,\wh y_k\,\f_B(x,t_k)
\end{gather}
defines eigenfunctions $y_k$ of the problem \eqref{1.6}, \eqref{1.11} such that  eigenfunction expansion \eqref{1.4} of $y$ converging in $\LI$ is valid.
\end{theorem}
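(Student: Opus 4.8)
The plan is to realize the $\lambda$-dependent boundary problem \eqref{1.6}, \eqref{1.11} as the Shtraus family of an exit-space self-adjoint extension $\wt A=\wt A^*$ of the minimal operator $A$ associated with \eqref{1.6}, and then to transport the abstract eigenfunction expansion for $\wt A$ (which has discrete spectrum, since the minimal operator is quasiregular and hence $A$ has discrete spectrum by the hypothesis) down to $\LI$. First I would set up a boundary triplet $\bt$ for $A^*$ whose boundary operators at $a$ encode $\cos B\cd y(a)+\sin B\cd y^{[1]}(a)$ and whose boundary operators at $b$ are the singular boundary values $\G_{0b}y,\G_{1b}y$; the $\gamma$-field and Weyl function of this triplet are computed from the solutions $\f_B(\cd,\l)$, $\psi_B(\cd,\l)$. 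One checks that the Weyl function is, up to the explicit Cayley-type coefficients $w_1,\dots,w_4$, exactly the transfer matrix appearing in \eqref{1.12}; this is the computational heart of part (i). Because $(C_0,C_1)$ is an entire Nevanlinna pair, the abstract parametrization result quoted from the excerpt produces an exit-space extension $\wt A=\wt A^*$ with discrete spectrum, and its Shtraus family $\sma_\tau(\l)$ is precisely the set of $y\in\Dma$ satisfying both conditions in \eqref{1.11}. Then $\lambda$ is an eigenvalue of \eqref{1.6}, \eqref{1.11} iff $\lambda\in\s(\wt A)$, giving an infinite discrete real set $\{t_k\}$ with no finite accumulation point. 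The identification of $\{t_k\}$ with the poles of $m$ follows from the Krein-type resolvent formula: the compressed resolvent $P_{\LI}(\wt A-\l)^{-1}\up\LI$ equals $R_\l^0 + \gamma(\l)\,(C_0(\l)-C_1(\l)M(\l))^{-1}C_1(\l)\,\gamma(\ov\l)^*$ (schematically), whose singularities are the poles of the scalar Nevanlinna function $m$ built from $M$ and $(C_0,C_1)$, and $m$ is meromorphic because $C_0,C_1,M$ are.

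For part (ii) I would use the spectral expansion of the self-adjoint operator $\wt A$ in $H\supset\LI$: for any $y\in\LI\subset H$ one has $y=\sum_k P_k y$ in $H$ (hence a fortiori in $\LI$ after projecting), where $P_k$ is the Riesz projection onto $\ker(\wt A-t_k)$. The task is to compute $P_k y$ explicitly in terms of data living on $\cI$. The key is that the eigenspace of $\wt A$ at $t_k$, when intersected/compressed with $\LI$, is one-dimensional and spanned by $\f_B(\cd,t_k)$ (the unique, up to scalar, $\LI$-solution of \eqref{1.6} satisfying the regular endpoint condition at $a$), and that the Riesz residue of the compressed resolvent at $t_k$ is a rank-one operator $y\mapsto c_k\,(y,\f_B(\cd,t_k))_\D\,\f_B(\cd,t_k)$. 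Matching this residue against the residue of the scalar function $m$ at its pole $t_k$ — which by the resolvent formula carries exactly the scalar factor $\wh\xi_k$ — identifies the constant $c_k$ with $-\wh\xi_k$, yielding \eqref{1.12.1}. Convergence of $\sum_k y_k$ in $\LI$ is then immediate: it is the orthogonal (in $H$) expansion of $y$ along the $P_k$, compressed to $\LI$, and compression is a contraction, so the $\LI$-series converges (to $y$, since $y$ itself lies in the closed span of the eigenprojections because $\wt A$ has purely discrete spectrum and $\LI$ reduces nothing outside that span).

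The main obstacle I expect is not the abstract machinery (which is handed to us by the earlier sections) but two concrete identifications: (a) verifying that the Weyl function $M(\l)$ of the chosen boundary triplet is the matrix whose Cayley transform by $(C_0,C_1)$ produces precisely the coefficients $w_1,\dots,w_4$ in \eqref{1.12} — this requires a careful Lagrange-identity computation of $\int_\cI \psi_B(x,0)\D(x)\f_B(x,\l)\,dx$ and its three companions, using $l[\f_B(\cd,\l)]=\l\D\f_B(\cd,\l)$ and the Wronskian normalizations at $a$; and (b) pinning down the residue constant, i.e., showing that the rank-one residue of the compressed resolvent of $\wt A$ at $t_k$ has scalar weight exactly $-\wh\xi_k$ relative to the normalized eigenfunction $\f_B(\cd,t_k)$ rather than some other multiple. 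Both are finite computations, but they are where sign conventions and normalization choices must be handled with care; everything else is assembling the quoted results about Shtraus families, exit-space extensions with discrete spectrum, and Krein's formula.
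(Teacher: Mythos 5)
Your overall architecture is the same as the paper's: encode the $\l$-dependent condition at $b$ by the entire Nevanlinna pair $C=(C_0,C_1)$ via $\tau=\tau_C\in\RM(\bC)$ (Proposition \ref{pr2.6.8}), realize \eqref{1.11} as the Shtraus family of the exit space extension $\wt A_\tau$ (Propositions \ref{pr3.8} and \ref{pr4.4}), get discreteness of $\s(\wt A_\tau)$ from quasiregularity of the equation plus $\tau_C\in\RM(\bC)$ (Proposition \ref{pr5.20}, Theorem \ref{th4.9}), and obtain \eqref{1.4} as the compressed spectral expansion $y=\sum_k PE(\{t_k\})y$ (Theorems \ref{th4.10} and \ref{th5.12}). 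Where you genuinely diverge is in the two concrete identifications of part (i)--(ii). The paper does not perform a Krein-formula/residue computation: the formula \eqref{1.12} for $m$, the equality of $EV$ with the poles of $m$, and the residue formula \eqref{1.12.1} are all imported from the generalized Fourier transform machinery of \cite{Mog20} and \cite{Mog15} --- Theorem \ref{th5.9} (the spectral function $\xi$ of $m$ is a spectral function of the equation), Theorem \ref{th5.10} (unitary equivalence of $\wt S_\tau$ with the multiplication operator $\L_\xi$, whence $PE(\{t_k\})\up\LI=V_\xi^*E_\xi(\{t_k\})V_\xi$ and $y_k=\f_B(\cdot,t_k)\,\xi_k\,\wh y_k$ with $\xi_k=-{\rm res}_{t_k}m$), and Proposition \ref{pr5.22} for the explicit coefficients $w_1,\dots,w_4$. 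Your Titchmarsh-type plan (compute the rank-one residue of the compressed resolvent and match it against ${\rm res}_{t_k}m$) is a viable alternative, but be aware that ``singularities of the compressed resolvent are the poles of $m$'' does not follow from the schematic Krein formula you wrote: the unperturbed term $(A_0-\l)^{-1}$ contributes its own singularities and cancellations can occur, so you would in effect have to establish the Green-kernel representation of the compressed resolvent through $m$ and $\f_B$, i.e.\ re-prove the content of Theorems \ref{th5.9}--\ref{th5.10}; the unitary-equivalence route disposes of this identification, and of the infiniteness and discreteness of the pole set, in one stroke.

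One concrete point your sketch omits, and it matters precisely because the weight degenerates: eigenvalues and eigenfunctions of \eqref{1.6}, \eqref{1.11} are defined through nonzero solutions $y$ on $\cI$, whereas $\wt A_\tau$, its Shtraus family and the projections $PE(\{t_k\})$ live in $\LI$, i.e.\ act on classes modulo $\ker\pi_\D$. To conclude $EV=\s_p(\wt A_\tau)$, and to know that the compressed eigenspace at $t_k$ is really spanned by the nonzero class of $\f_B(\cdot,t_k)$ so that \eqref{1.12.1} produces genuine eigenfunctions, you must exclude a nonzero solution with $\D(x)y(x)=0$ a.e.\ on $\cI$. The paper handles this through the definiteness of the equation for a nontrivial weight (\cite{Mog20}, used in the proof of Theorem \ref{th5.12} to get $\ker(\pi_\D\up\gN_t)=\{0\}$). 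With that supplement, and with the two ``finite computations'' you flag actually carried out (they are exactly what Proposition \ref{pr5.22} and Theorem \ref{th5.17} supply), your plan is sound.
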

If $C=(C_0,C_1)$ is an entire Nevanlinna pair and $C_1(\l)\neq 0,\;\l\in\CR$, then $\tau:=-C_0 C_1^{-1} $ is a  Nevanlinna function. Therefore there exist the limits $\qquad\cB_\infty=\lim\limits_{y\to \infty}\tfrac 1 {iy} \tau (iy)\in \bR$,  $ \;\wh D_\infty=\lim\limits_{y\to \infty} y\im \tau(i y)\leq \infty$ and one of the following alternative cases holds:

Case 1.  $\;\cB_\infty\neq 0$;\;\; Case 2. $\; \cB_\infty = 0$  and $\wh D_\infty <\infty$; \;\; Case 3. $\;\cB_\infty = 0$ and $\wh D_\infty =\infty$.
\vskip 1mm
\noindent Moreover, in Case 2 there exists the limit $D_\infty= \lim\limits_{y\to\infty} \tau(iy) $.
\begin{theorem}\label{th1.3}
Let under the assumptions of Theorem \ref{th1.2} $C_1(\l)\neq 0, \; \l\in\CR,$ and let  a function $y\in\Dma$ satisfies one of the following boundary conditions \eqref{1.13} -- \eqref{1.15} depending on Cases 1 -- 3:
\begin{gather}
\text{in Case 1}:\quad \cos B\cd y(a)+\sin B\cd y^{[1]}(a)=0, \quad  \G_{0b}y =0;\label{1.13}\\
\text{in Case 2}:\quad \cos B\cd y(a)+\sin B\cd y^{[1]}(a)=0, \quad  \G_{1b}y=D_\infty\G_{0b}y ;\label{1.14}\\
\text{in Case 2}:\quad\cos B\cd y(a)+\sin B\cd y^{[1]}(a)=0, \quad \G_{0b}y =\G_{1b}y=0.\label{1.15}
\end{gather}
Then the eigenfunction expansion \eqref{1.4} of $y$ converges absolutely  and uniformly on each compact interval $[a,b']\subset \cI$.
\end{theorem}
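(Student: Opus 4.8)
The plan is to recognise that the boundary conditions \eqref{1.13}--\eqref{1.15} are exactly those that place $y$, once embedded into the exit space, into the \emph{domain} of the self-adjoint operator attached to the problem, and then to cash in the resulting gain of one power of the eigenvalue against a bound, uniform on compact subintervals, for the diagonal of the resolvent kernel. Concretely, let $\wt A=\wt A^*$ act in $H=\LI\oplus H_1$ as the self-adjoint operator associated with \eqref{1.6}, \eqref{1.11} by the abstract part of the paper, so that $\wt A$ acts as $\D^{-1}l[\cd]$ on the $\LI$-component, $\s(\wt A)$ is discrete, and the spectral expansion in $H$ of any vector of $\LI$ runs only over the eigenvalues $t_k$ of Theorem~\ref{th1.2}; write $v(x)$ for the value at $x$ of the $\LI$-component of $v\in H$. \emph{The decisive --- and, I expect, hardest --- step} is to prove that for $y\in\Dma$ the element $u:=(y,0)$ lies in $\dom\wt A$ if and only if $y$ satisfies the condition among \eqref{1.13}--\eqref{1.15} attached to the case at hand; one reads this off the explicit description of $\dom\wt A$ (equivalently of the Shtraus family) by inserting the minimal self-adjoint realization of $\tau=-C_0C_1^{-1}$ at infinity --- in Case~1 a non-zero linear term of $\tau$ forces $\G_{0b}y=0$ on elements with zero $H_1$-component, in Case~2 the finite limit $D_\infty$ gives the single scalar relation $\G_{1b}y=D_\infty\G_{0b}y$, and in Case~3 the infinite measure forces $\G_{0b}y=\G_{1b}y=0$.

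With $u=(y,0)\in\dom\wt A$, set $g:=(\wt A-i)u\in H$ and expand $g=\sum_k(g,e_k)_H e_k$ in an orthonormal basis $\{e_k\}$ of eigenfunctions, $\wt A e_k=t_ke_k$, whose $\LI$-components are scalar multiples of $\f_B(\cd,t_k)$. Applying $(\wt A-i)^{-1}$ and projecting onto $\LI$ produces, term by term, the expansion \eqref{1.4}, \eqref{1.12.1} of Theorem~\ref{th1.2}: $y=\sum_ky_k$ with $y_k(x)=(g,e_k)_H\,\langle e_k,M_x\rangle_H$, where $M_x\in H$ is characterised by $\langle w,M_x\rangle_H=\big((\wt A-i)^{-1}w\big)(x)$; in particular $\langle e_k,M_x\rangle_H=\tfrac1{t_k-i}e_k(x)$. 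Since $u\in\dom\wt A$, the series $\wt A u=\sum_k t_k(u,e_k)_H e_k$ converges in $H$, and projecting it onto $\LI$ gives $\sum_kt_ky_k=f_y$ in $\LI$.

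Quasiregularity is used precisely to control $M_x$: the Green function $G_\l(x,t)=\f_B(x\wedge t,\l)\,\chi_\l(x\vee t)$ with $\chi_\l=\psi_B(\cd,\l)+m(\l)\f_B(\cd,\l)$ has $G_\l(x,\cd)\in\LI$ for each fixed $x$ and each $\l\notin EV$ (because $\f_B(\cd,\l),\chi_\l\in\LI$), so $M_x$ is a genuine vector of $H$, the map $x\mapsto M_x$ is continuous from $\cI$ into $H$, hence $N(b'):=\sup_{x\in[a,b']}\|M_x\|_H<\infty$ for every $[a,b']\subset\cI$, and the same holds for the kernels $M_x^{[1]}$ obtained by replacing $\f_B,\chi_\l$ by their quasi-derivatives. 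Bessel's inequality gives $\sum_k|\langle e_k,M_x\rangle_H|^2\le\|M_x\|_H^2\le N(b')^2$ on $[a,b']$, whence by Cauchy--Schwarz
\[
\sum_{k>n}|y_k(x)|\le N(b')\Big(\sum_{k>n}|(g,e_k)_H|^2\Big)^{1/2},
\]
whose right-hand side tends to $0$ as $n\to\infty$ uniformly in $x\in[a,b']$, since $\sum_k|(g,e_k)_H|^2=\|g\|_H^2<\infty$; the estimate with $M_x^{[1]}$ shows likewise that $\sum_ky_k^{[1]}$ converges absolutely and uniformly on $[a,b']$. It remains to identify the uniform limit $\wt y$ with $y$: passing to the limit in the integrated form of \eqref{1.6} (using these uniform convergences together with $\sum_kt_ky_k=f_y$ in $\LI$) gives $l[\wt y]=\D f_y=l[y]$ a.e.\ on $\cI$, so $e:=\wt y-y$ solves $l[e]=0$ while vanishing on $\{x:\D(x)>0\}$, a set of positive measure; since $e$ and $e^{[1]}$ satisfy a first-order linear system with locally integrable coefficients, a non-trivial $e$ cannot vanish on a set of positive measure, so $e\equiv0$. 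Thus $\wt y=y$ on $\cI$, and \eqref{1.4} converges absolutely and uniformly on each $[a,b']\subset\cI$.
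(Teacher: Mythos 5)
Your route is genuinely different from the paper's (the paper computes $\eta_C$ from \eqref{5.51} in the three cases and then simply invokes Proposition \ref{pr5.22}/Theorem \ref{th5.19}, i.e. the uniform-convergence theorem imported from \cite{Mog20}), but as it stands it has a genuine gap at exactly the point you yourself flag as decisive. The equivalence ``$y$ satisfies \eqref{1.13}--\eqref{1.15} $\iff$ $(y,0)\in\dom\wt A$'' is not something you can ``read off the explicit description of $\dom\wt A$ (equivalently of the Shtraus family)'': the Shtraus family described in this paper (Theorems \ref{th3.2}, \ref{th3.6}) concerns $S_{\wt A}(t)$ at \emph{finite real} $t$, whose graph contains pairs with nonzero exit-space entrance component, so it does not determine $\dom\wt A\cap\gH$. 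What you need is the description of the \emph{compression} $C_{\wt A_\tau}=P_\gH\wt A_\tau\up\gH$ in terms of the behaviour of $\tau$ at infinity (the relation $\eta_C$ of \eqref{5.50}); that is precisely the external result ([Mog19, Theorem 3.8], resp.\ [Mog20, Theorem 2.4]) on which the paper's own Theorem \ref{th5.19} rests, and it is neither stated as a theorem in this paper nor proved in your proposal. Since the whole purpose of your argument is to gain the factor $(t_k-i)^{-1}$ from $u\in\dom\wt A$, this unproved step carries the entire weight of the proof.

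Two further points would fail as written even granting that step. First, your representer $M_x$ must satisfy $\langle w,M_x\rangle_H=((\wt A-i)^{-1}w)(x)$ for \emph{all} $w\in H$, because $g=(\wt A-i)u$ generally has a nonzero component in $H_1$; the Green-function argument only bounds the restriction of this functional to $\LI$, and you also ignore that pointwise evaluation is not defined on $\LI$-classes when $\D$ degenerates (it is meaningful only through the unique $\Dma$-representative, via definiteness of the equation, and its boundedness on $\dom S^*$ with the graph norm needs an argument). This is fixable (the $H_1$-contribution lands, after projection, in the finite-dimensional space $\cN_i$), but it is not done. Second, your closing identification argument is wrong as stated: with $p^{-1}$ and $q$ merely locally integrable (they may vanish on intervals), a nontrivial solution of $l[e]=0$ \emph{can} vanish on a set of positive measure, so unique continuation from $\{\D>0\}$ fails; what actually yields $e=0$ is the definiteness of the equation ([Mog20, Proposition 5.11], used in the proof of Theorem \ref{th5.12}), i.e. $l[e]=0$ together with $\D e=0$ a.e.\ forces $e=0$. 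With these three repairs your scheme would give a legitimate alternative, essentially self-contained, proof of the uniform convergence; without them it does not close.
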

\begin{remark}\label{rem1.4}
If in addition to the assumptions of Theorems \ref{th1.2} and \ref{th1.3} the equation \eqref{1.6} is regular, then statements of this theorems are valid  with $\G_{0b}y=y(b)$, $\G_{1b}y =y^{[1]}(b)$ and  the equality \eqref{1.12} should be replaced with
\begin{gather}\label{1.16}
m(\l)= \frac
{\psi_B(b,\l)\,C_0(\l)+\psi_B^{[1]}(b,\l)\,C_1(\l)} {\f_B(b,\l)\,C_0(\l)+\f_B^{[1]}(b,\l\,)C_1(\l)}, \quad \l\in\CR.
\end{gather}
\end{remark}
Similar theorems are proved in the paper for equation \eqref{1.1} of an even order $2r$ with the nontrivial  weight $\D$ and the discrete spectrum.  We study eigenvalue  problem for this equation subject to the boundary conditions \eqref{1.2} with an entire Nevanlinna pair $C=(C_0, C_1)$, i.e, a pair of entire operator functions $C_0$ and $C_1$ such that $\ran C(\l)=\bC^m$, $i\im \l\cd C(\l)J C^*(\l)\leq 0$ and $C(\l)J C^*(\ov\l)=0, \;\l\in\bC,$ with $J=\begin{pmatrix}  0 &-I_m\cr I_m & 0 \end{pmatrix}$. The  operator functions $C_0$ and $C_1$ take on values  in $\B (\bC^m)$, where $m=d-r$ and $d$ is the deficiency index of the equation \eqref{1.1}. We show that the set of eigenvalues of the problem   \eqref{1.1}, \eqref{1.2}  is an infinite countable subset of $\bR$ without finite limit points and  each function $y\in\LI$ admits the eigenfunction expansion \eqref{1.4} converging in $\LI$ (the eigenfunctions $y_k$ in \eqref{1.4} are not generally speaking mutually  orthogonal). Moreover, similarly to Theorems \ref{th1.2} and \ref{th1.3} we give an explicit method for calculation of $y_k$  in terms of the original entire Nevanlinna pair $C=(C_0,C_1)$ and specify boundary conditions on $y$ implying the uniform convergence of \eqref{1.4} (see Theorems \ref{th5.12}, \ref{th5.17} and \ref{th5.19}). In the case of the selfadjoint eigenvalue problem \eqref{1.1}, \eqref{1.3} these results extend Theorem \ref{th1.1} to equations \eqref{1.1} with the nontrivial weight $\D$.

Our approach  is based on the method of boundary triplets in the extension theory of a symmetric  operator $A$ in the Hilbert space $\gH$ (see \cite{BHS,DM17,GorGor} and references therein). By using this method  we parameterize in  terms of a Nevanlinna parameter $\tau$ all exit space extensions $\wt A=\wt A^*$ of $A$ with the discrete spectrum $\s(\wt A)$. Moreover, we characterize the Shtraus family of the extension $\wt A$ \cite{BHS,Sht70} in terms of abstract boundary conditions and describe extensions $\wt A$ with certain special properties of their Shtraus family. These results enables  us to define an abstract eigenvalue  problem for $A^*$, which generates the abstract eigenvector expansion  in $\gH$. In the case when $A$ is the minimal operator of the equation \eqref{1.1} the abstract eigenvalue problem for $A^*$  takes the form \eqref{1.1}, \eqref{1.2} and the abstract eigenvector expansion turns into \eqref{1.4}.

In conclusion note that this paper is a continuation of \cite{Mog20}, where ''continuous'' eigenfunction expansions in the form of the generalized Fourier transform were considered.

\section{Preliminaries}
\subsection{Some notations and definitions}
The following notations will be used throughout the paper: $\gH$, $\cH$ denote separable  Hilbert spaces; $\B (\cH_1,\cH_2)$  is the set of all bounded linear operators defined on $\cH_1$ with values in  $\cH_2$; $\B(\cH)=\B(\cH,\cH)$; $A\up \mathcal L$ is a restriction of the operator $A\in \B(\cH_1,\cH_2)$ to the linear manifold $\mathcal L\subset\cH_1$; $P_\cL$ is the orthoprojection in $\gH$ onto the subspace $\cL\subset \gH$;  $\bC_+\,(\bC_-)$ is the open  upper (lower) half-plane  of the complex plane.

A non-decreasing strongly left continuous operator function $\xi: \bR\to \B(\cH)$ with $\xi(0)=0$ is called a distribution function or shortly a distribution.

We denote by $\cF$ the class of all sets $F\subset \bR$ satisfying at least one (and hence all) of the following equivalent  conditions: (i) the set $F\cap [a,b]$ is finite or empty for any compact interval $[a,b]\subset \bR$; (ii) $F$ is closed and all points of $F$ are isolated; (iii) the set $F$ has no finite limit points. Clearly, each set $F\in\cF$ admits the representation  in the form of a finite or infinite increasing  sequence $F=\{t_k\}_{\nu_-}^ {\nu_+}$, where $\nu_-\in\bZ\cup \{-\infty\}, \; \nu_+\in\bZ\cup \{\infty\}$, $-\infty\leq \nu_-\leq \nu_+\leq\infty$ and $k$ is an integer between $\nu_-$ and $\nu_+$. We assume that $\emptyset\in\cF$.

Recall that a linear manifold $T$ in the Hilbert space $\cH_0\oplus\cH_1$ ($\cH\oplus\cH$) is called a  linear relation from $\cH_0$ to $\cH_1$ (resp. in $\cH$). The set of all closed linear relations from $\cH_0$ to $\cH_1$ (in $\cH$) will be denoted by $\C (\cH_0,\cH_1)$ (resp. $\C(\cH)$). Clearly for each linear operator $T:\dom T\to\cH_1, \;\dom T\subset \cH_0,$ its graph ${\rm gr} T =\{\{f,Tf\}:f\in \dom T\} $  is a linear relation from $\cH_0$ to $\cH_1$. This fact enables one to consider an operator $T$ as a linear relation. In the following we denote by $\mathcal C (\cH_0,\cH_1)$ the set of all closed linear operators $T:\dom T\to \cH_1$ with $\dom T\subset \cH_0$. Moreover, we let $\mathcal C (\cH)=\cC (\cH,\cH)$.

For a linear relation $T$ from $\cH_0$ to $\cH_1$  we denote by $\dom T,\; \ker T, \; \ran T$ and  $\mul T:=\{h_1\in\cH_1: \{0,h_1\}\in T\}$
the domain, kernel, range and multivalued part  of $T$ respectively; moreover, we let $\wh\mul T=\{0\}\oplus\mul T$. Clearly $T$ is an operator if and only if $\mul T=\{0\}$. Denote also by $T^{-1}$ and $T^*$ the inverse and adjoint linear relations of $T$ respectively.

A linear relation $T$ in $\gH$ is called symmetric (self-adjoint) if $T\subset T^*$ (resp. $T=T^*$). As is known, for a symmetric relation $T\in\C (\gH)$  the decompositions
\begin{gather}\label{2.7}
\gH=\gH_0\oplus \mul T, \qquad  T= {\rm gr}\, T_{\rm op}\oplus \wh\mul T
\end{gather}
hold with $\gH_0=\gH\ominus \mul T$ and a symmetric operator $T_{\rm op}\in\cC (\gH_0)$ (the operator part of $T$). Clearly, $T^*=T$ if and only if $T_{\rm op}^*=T_{\rm op}$.

For a linear relation $T\in\C (\gH)$ and $\l\in\bC$ we let
\begin{gather*}
\gN_\l(T)=\ker (T-\l)=\{f\in\gH:\{f,\l f\}\in T\}, \quad \wh\gN_\l(T)=\{\{f,\l f\}:f\in\gN_\l(T)\}.
\end{gather*}
Moreover, we use the following notations: $\rho(T)=\{\l\in\bC:(T-\l)^{-1}\in\B(\gH)\}$ is the  resolvent set, $\s(T)=\bC\setminus \rho (T)$ is the spectrum,  $\s_p(T)=\{\l\in\bC:\gN_\l(T)\neq 0\}$ is the point spectrum  and $\wh\rho(T)=\{\l\in\bC: \gN_\l(T)=\{0\} \;\; {\rm and}\;\; \ran (T-\l)\;\;\text{\rm  is closed}\}$
is the set of regular type points of $T$.

A subspace $\gH_1\subset \gH$ reduces a relation $T\in\C (\gH)$ if $T=T_1\oplus T_2$,  where $T_1\in \C (\gH_1)$ and $T_2\in \C (\gH\ominus\gH_1)$.

For an operator $T=T^*\in \B(\gH)$ we write $T\geq 0$ if $(Tf,f)\geq 0,\; f\in \gH,$ and $T>0 $ if $T-\a I\geq 0$ with some $\a>0$.
\subsection{Nevanlinna functions}
Recall that  a holomorphic operator function $\tau:\CR\to \B(\cH)$ is called a Nevanlinna function if $\im\l\cd\im \tau (\l)\geq 0$ and $\tau^*(\l)=\tau (\ov\l), \; \l\in\CR$. The class of all  Nevanlinna $\B(\cH)$-valued functions will be denoted by $R[\cH]$.

As is known (see e.g. \cite{BHS,DM17}), an operator function $\tau:\CR\to \B (\cH)$ belongs to the class $R[\cH]$ if and only if it admits the integral representation
\begin{gather}\label{2.9}
\tau(\l)=\cA + \l \cB + \int_\bR \left (\frac 1 {t-\l}- \frac t {t^2+1}\right) \, d\xi(t), \quad \l\in\CR,
\end{gather}
with self-adjoint operators $\cA,\;\cB\in \B(\cH), \; \cB\geq 0,$ and a distribution  $\xi:\bR\to \B(\cH)$ such that $\int\limits_\bR \left(t^2+1 \right )^{-1}(d\xi(t)h,h) <\infty, \; h\in\cH $. The distribution  $\xi$ in \eqref{2.9} is called a spectral function of $\tau$ (it defines uniquely by $\tau$).

The operator-function $\tau\in R[\cH]$ belongs to the class: (i) $R_c[\cH]$, if $\ran \im \tau(\l)$ is closed for all $\l\in\CR$; (ii)  $R_u[\cH]$ if $\im\l\cd\im \tau (\l) > 0, \; \l\in\CR$. Clearly, in the case $\dim \cH<\infty $ one has   $R_c [\cH]=R [\cH]$.

The following proposition is well known (see e.g. \cite{Mal92}).
\begin{proposition}\label{pr2.4}
Let  $\tau\in R[\cH]$. Then:

{\rm (i)} The equalities
\begin{gather}
\cB_{\tau \infty}=s\text{-}\lim_{y\to\infty}\tfrac 1 {iy}\tau(iy), \nonumber\\
\dom D_{\tau\infty} =\{h\in\cH: \lim_{y\to\infty}  y\im (\tau(iy)h,h)< \infty\}, \;\; D_{\tau\infty} h=\lim_{y\to\infty}  \tau(iy)h, \quad h\in \dom D_{\tau\infty}\nonumber
\end{gather}
correctly define  the operator $\cB_{\tau\infty}\in \B(\cH),\; \cB_{\tau\infty}\geq 0$, the (not necessarily closed) linear manifold $\dom D_{\tau\infty} \subset \cH$ and the  linear operator $D_{\tau\infty}:\dom D_{\tau\infty}\to \cH$. Moreover, $\dom D_{\tau\infty} \subset \ker\cB_{\tau\infty}$.

{\rm (ii)} For each $t\in\bR$ the equalities
\begin{gather}
\cB_{\tau}(t)=s\text{-}\lim_{y\to 0}(-iy\,\tau(t+iy)),  \qquad
\dom D_{\tau}(t) =\{h\in\cH: \lim_{y\to 0}  \tfrac 1 y \im (\tau(t+iy)h,h)< \infty\}\nonumber\\
 D_{\tau}(t) h=\lim_{y\to 0}  \tau(t+iy)h, \quad h\in \dom D_{\tau}(t)\nonumber
\end{gather}
correctly define  the operator $\cB_{\tau}(t)\in \B(\cH),\; \cB_{\tau}(t)\geq 0$, the (not necessarily closed) linear manifold $\dom D_{\tau}(t) \subset \cH$ and the  linear operator $D_{\tau}(t):\dom D_{\tau}(t)\to \cH$.
\end{proposition}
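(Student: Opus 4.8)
The plan is to substitute the integral representation \eqref{2.9} of $\tau$ and reduce both assertions to elementary limit computations for the rational kernels occurring in it. Write $\tau(\l)=\cA+\l\cB+\int_\bR k_\l(t)\,d\xi(t)$ with $k_\l(t)=\tfrac1{t-\l}-\tfrac t{t^2+1}$, and put $d\mu(t)=(t^2+1)^{-1}d\xi(t)$; by the hypothesis on $\xi$ the form $h\mapsto\int_\bR d(\mu(t)h,h)$ is finite on all of $\cH$, hence $\mu$ is a \emph{finite} positive operator measure. All the passages to the limit below are of the same kind: a pointwise limit of the relevant scalar kernel, a $y$-uniform majorant, dominated (or monotone) convergence, and the operator Cauchy--Schwarz estimate $\|\int_\bR g\,d\eta\,h\|^2\le\|\eta(\bR)\|\int_\bR|g|^2\,d(\eta(t)h,h)$ to upgrade form convergence to strong operator convergence, applied to a finite measure $\eta$.

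For part (i) I would first use $\tfrac1{iy}\tau(iy)=\tfrac1{iy}\cA+\cB+\tfrac1{iy}\int_\bR k_{iy}(t)\,d\xi(t)$ and the fact that $\tfrac1{iy}k_{iy}(t)(t^2+1)=\tfrac1{iy}\cd\tfrac{1+ity}{t-iy}$ is bounded for $y\ge1$ uniformly in $t$ and tends to $0$ for every $t$, so dominated convergence against $\mu$ gives $\cB_{\tau\infty}=\cB\in\B(\cH)$, $\cB_{\tau\infty}\ge0$. Using $\im k_{iy}(t)=y(t^2+y^2)^{-1}$ one gets $y\,\im(\tau(iy)h,h)=y^2(\cB h,h)+\int_\bR\tfrac{y^2}{t^2+y^2}\,d(\xi(t)h,h)$, in which both summands increase with $y$, so by monotone convergence the limit exists in $[0,\infty]$ and is finite exactly when $\cB h=0$ and $\int_\bR d(\xi(t)h,h)<\infty$. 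Hence $\dom D_{\tau\infty}=\ker\cB\cap\{h:\int_\bR d(\xi(t)h,h)<\infty\}$, which is a linear manifold (kernel of $\cB_{\tau\infty}$ intersected with the finiteness set of the positive form $h\mapsto\int_\bR d(\xi(t)h,h)$) and is contained in $\ker\cB_{\tau\infty}$; on it $d(\xi(\cdot)h,h)$ is a finite measure, $\cB h=0$, $|k_{iy}(t)|\le\tfrac32$ for $y\ge1$ and $k_{iy}(t)\to-\tfrac t{t^2+1}$ pointwise, so dominated convergence yields $D_{\tau\infty}h=\cA h-\int_\bR\tfrac t{t^2+1}\,d\xi(t)h$, a linear operator on $\dom D_{\tau\infty}$.

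For part (ii) I would run the parallel argument with the point $t$ in place of $\infty$. For $\cB_\tau(t)$ one writes $-iy\tau(t+iy)=-iy\cA-iy(t+iy)\cB-iy\int_\bR k_{t+iy}(s)\,d\xi(s)$; since $-iy\,k_{t+iy}(s)(s^2+1)=-iy\cd\tfrac{1+s(t+iy)}{s-t-iy}$ tends to $0$ for $s\ne t$ and to $1+t^2$ for $s=t$, a limit passage against the finite measure $\mu$, carried out by splitting $\{|s|\le N\}$ (where this integrand is uniformly bounded) and $\{|s|>N\}$ (chosen of small $\mu$-mass first, the integrand being bounded there too), gives $\cB_\tau(t)=\xi(\{t\})$, the jump of $\xi$ at $t$, so $\cB_\tau(t)\in\B(\cH)$ and $\cB_\tau(t)\ge0$. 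From $\im k_{t+iy}(s)=y((s-t)^2+y^2)^{-1}$ one obtains $\tfrac1y\,\im(\tau(t+iy)h,h)=(\cB h,h)+\int_\bR\tfrac{d(\xi(s)h,h)}{(s-t)^2+y^2}$, which increases as $y\downarrow0$, so its limit exists in $[0,\infty]$ and is finite exactly when $\int_\bR(s-t)^{-2}\,d(\xi(s)h,h)<\infty$ — which in particular forces $\xi(\{t\})h=0$, so $\dom D_\tau(t)$ is a linear manifold and is contained in $\ker\cB_\tau(t)$. On $\dom D_\tau(t)$ one has, near $s=t$, $|k_{t+iy}(s)|\le|s-t|^{-1}$ with $|s-t|^{-1}\in L^1$ of $d(\xi(\cdot)h,h)$ by Cauchy--Schwarz, and away from $t$, $|k_{t+iy}(s)|\le C_t(s^2+1)^{-1}$, so dominated convergence gives $\tau(t+iy)h\to\cA h+t\cB h+\int_\bR\bigl(\tfrac1{s-t}-\tfrac s{s^2+1}\bigr)d\xi(s)h=:D_\tau(t)h$, a linear operator.

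The routine part is the verification of the pointwise limits and the $y$-uniform majorants for $k_\l$. The one point calling for mild care is the upgrade from quadratic-form convergence to the asserted strong operator convergence of $\cB_{\tau\infty}$, $\cB_\tau(t)$, $D_{\tau\infty}h$ and $D_\tau(t)h$, which is supplied by the operator Cauchy--Schwarz estimate together with the finiteness of $\mu$, respectively of $d(\xi(\cdot)h,h)$ on $\dom D_{\tau\infty}$; for $\cB_\tau(t)$ one additionally needs the truncation in $s$, because against the finite measure $\mu$ the natural integrand $-iy\,k_{t+iy}(s)(s^2+1)$ is not uniformly bounded as $|s|\to\infty$. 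Finally, the linearity of $\dom D_{\tau\infty}$ and $\dom D_\tau(t)$ is just the elementary fact that the finiteness set of a positive quadratic form is a linear manifold.
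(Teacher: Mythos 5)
The paper does not prove this proposition at all: it is quoted as well known, with a pointer to \cite{Mal92}, so there is no argument of the author's to compare yours against. Your proof — reading everything off the integral representation \eqref{2.9}, with monotone convergence for the imaginary parts and dominated convergence plus an operator Cauchy--Schwarz bound for the strong limits — is the standard argument and is essentially correct, including the identifications $\cB_{\tau\infty}=\cB$ and $\cB_\tau(t)=\xi(\{t\})$ and the characterizations of the two domains as finiteness sets of positive forms.

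Two small points. First, your closing remark that the integrand $-iy\,k_{t+iy}(s)(s^2+1)$ is ``not uniformly bounded as $|s|\to\infty$'' is not accurate: for $0<y\le 1$ one has $|s-t-iy|\ge |s|/2$ for $|s|$ large, so this integrand is bounded on all of $\bR$ by a constant depending only on $t$, and plain dominated convergence against the finite measure $\mu$ already gives $\cB_\tau(t)=\xi(\{t\})$; your truncation in $s$ is redundant (and, since in the body you correctly assert boundedness on $\{|s|>N\}$ as well, internally consistent), so no gap results. Second, for the strong convergence of $\tau(iy)h$ on $\dom D_{\tau\infty}$ the constant majorant $|k_{iy}(s)|\le \tfrac32$ suffices for the scalar integrals against $d(\xi(\cdot)h,h)$ but not, as written, for the Cauchy--Schwarz upgrade with $\eta=\mu$: there one needs $|k_{iy}(s)-k_\infty(s)|^2(s^2+1)$ to have a $y$-independent majorant integrable against $d(\xi(\cdot)h,h)$, which requires the slightly sharper bound $|k_{iy}(s)|\le \sqrt2\,(s^2+1)^{-1/2}$ for $y\ge1$ (immediate from $k_{iy}(s)=\tfrac{1+isy}{(s-iy)(s^2+1)}$). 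This is the same elementary computation you already use for $\cB_{\tau\infty}$, so it is a notational tightening rather than a missing idea; in part (ii) your stated majorants ($|s-t|^{-1}$ near $t$, $C_t(s^2+1)^{-1}$ away from $t$) already do the job.
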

\begin{definition}\label{def2.4.1}$\,$\cite{DM17,BHS}
A function $\tau:\CR\to \C (\cH)$ is referred to the class $\RH$ of Nevanlinna relation valued functions if: (\romannumeral 1) $\im (f',f)\geq 0, \quad \{f,f'\}\in\tau (\l),\quad \l\in\bC_+$; (\romannumeral 2)  $(\tau(\l)+i)^{-1}\in \B (\cH)$  for any $\l\in\bC_+$ and $(\tau (\l)+i)^{-1}$ is a holomorphic operator-function in $\bC_+$;
(\romannumeral 3) $\tau^*(\l)=\tau (\ov\l),\;\l\in\CR$.
\end{definition}

It is clear that $R [\cH]\subset \wt R(\cH)$.

According to \cite{KreLan71} for each function $\tau\in \RH$ the multivalued part $\cK:=\mul \tau(\l)$ of $\tau (\l)$ does not depend on $\l\in\CR$ and for each such $\l$ the decompositions
\begin{equation}\label{2.11}
\cH=\cH_0 \oplus\cK,\qquad
\tau (\l)=\tau_0(\l)\oplus \wh \cK=\{\{h_0, \tau_0(\l)h_0\oplus k\}: h_0\in\cH_0,k\in\cK\}
\end{equation}
hold with subspaces $\cH_0=\cH\ominus\cK$, $\wh\cK=\{0\}\oplus\cK$  and a function  $\tau_0\in \wt R (\cH_0)$, whose values are densely defined operators. The operator function $\tau_0$ is called the operator part of $\tau$.

A  function $\tau\in \RH$ will be referred to the class $\RCH$  if  $\tau_0\in R_c [\cH_0]$. In the case  $\dim \cH<\infty$  one has $\RCH=\RH$.
\subsection{Meromorphic Nevanlinna functions and entire Nevanlinna pairs}
Below within this subsection we assume that $\cH$ is a Hilbert space with $\dim\cH<\infty$.

\begin{definition}\label{def2.6}
A  distribution $\xi:\bR\to\B(\cH)$ will be called discrete if
there exist sequences $F=\{t_k\}_{\nu_-}^{\nu_+}\in \cF$ and $\Xi=\{\xi_k\}_{\nu_-}^{\nu_+}$ with $0\leq\xi_k\in\B (\cH),\; \xi_k\neq 0,$ such that $\xi(t)$ is constant on each interval $(t_{k-1}, t_k)$ and $\xi(t_k+0)-\xi(t_k)=\xi_k$. For a discrete distribution $\xi$ defined by sequences $F$ and $\Xi$ we write $\xi=\{F,\Xi\}$
\end{definition}
\begin{definition}\label{def2.6.1}
An operator-valued function $\tau_0\in R[\cH_0]$ will be referred to the class $\Rm[\cH_0]$ if it admits a  continuation to an entire or meromorphic  function in $\bC$ (this function is denoted by $\tau_0$ as well). A relation-valued function $\tau \in \wt R(\cH)$ will be referred to the class $\RM (\cH)$ if its operator part $\tau_0$ (see \eqref{2.11}) belongs to $\Rm[\cH_0]$.
\end{definition}
For a function $\tau_0\in \Rm [\cH_0]$ we denote by $F_{\tau_0}$ the set of all poles of $\tau_0$. Clearly, $F_{\tau_0}$ admits the representation as a sequence $F_{\tau_0}=\{t_k\}_{\nu_-}^{\nu_+}\in \cF$.

As is known (see e.g. \cite[Proposition A.4.5]{BHS}) a function $\tau_0\in R [\cH_0]$ admits a continuation to  an interval
$(\a,\b)\subset \bR$ if and only if the spectral function $\xi$ of $\tau$ is constant on $(\a,\b)$. This fact and \eqref{2.9} yield the following assertion.
\begin{assertion}\label{ass2.6.2}
{\rm (i)} Let $\tau_0\in R [\cH_0]$. Then the following assertions are equivalent: {\rm (a)} $\tau_0\in\Rm [\cH_0]$; {\rm (b)} there is a set $F\in\cF$ such that $\tau_0$ admits a holomorphic continuation to $\bR\setminus F$; {\rm (c)} the  spectral function $\xi$ of $\tau$ is discrete.

{\rm (i)} Let $\tau_0 \in \Rm [\cH_0]$  and let $\xi=\{F, \Xi\}$ be a (discrete) spectral function of $\tau_0$ with $F=\{t_k\}_{\nu_-}^{\nu_+}$ and $\Xi=\{\xi_k\}_{\nu_-}^{\nu_+}$. Then $F=F_{\tau_0}$ and for any $t_k\in F_{\tau_0}$ there exists a function $\tau_k\in R [\cH_0]$ holomorphic at $t_k$ and  such that
\begin{gather}\label{2.13}
\tau_0(\l)=\tfrac 1 {t_k-\l}\,\xi_k+\tau_k(\l),\quad \l\in \bC\setminus F_{\tau_0}
\end{gather}
Equality \eqref{2.13} means that each $t_k\in F_{\tau_0}$ is a pole of $\tau_0$ of the first order and
\begin{gather}\label{2.15}
\xi_k= -\lim_{\l\to t_k}(\l-t_k)\tau_0 (\l)=-\underset{t_k}{\rm res}\,\tau_0,
\end{gather}
where $\underset{t_k}{\rm res}\,\tau_0$  denotes the residue of $\tau_0$ at  $t_k$.
\end{assertion}
In the following we deal with pairs $(C_0, C_1)$ of holomorphic operator functions $C_j:\cD\to \B (\cH), \; \cD\subset \bC, \; j\in\{0,1\}$. Clearly, the equality
\begin{gather*}
C(\l)=(C_0(\l),C_1(\l)):\cH\oplus\cH\to \cH, \quad \l\in\cD
\end{gather*}
enables one to identify such a pair with a holomorphic operator function $C:\cD\to \B (\cH^2,\cH)$.

Recall (see e.g. \cite{DM00,DM17}) that a pair $C=(C_0,C_1)$ of holomorphic operator functions $C_j:\CR\to \B (\cH),  \; j\in\{0,1\},$ is called a Nevanlinna pair in $\cH$ if for any $\l\in\CR$
\begin{gather}\label{2.18}
\im\l\cd\im (C_1(\l)C^*_0(\l)\geq 0,\quad C_1(\l)C_0^*(\ov\l)-C_0(\l)C_1^*(\ov\l)=0, \quad \ran C(\l)=\cH.
\end{gather}
Let  $J_\cH\in\B(\cH^2)$ be the operator given by
\begin{equation} \label{2.18.1}
J_\cH=\begin{pmatrix} 0 & -I_\cH \cr  I_\cH&
0\end{pmatrix}:\cH\oplus \cH \to \cH\oplus \cH.
\end{equation}
Then the conditions \eqref{2.18} can be written as
\begin{gather}\label{2.19}
i\, \im \l \cd C(\l)J_\cH C^*(\l)\leq 0, \quad C(\l)J_\cH C^*(\ov\l)=0, \quad \ran C(\l)=\cH, \;\; \l\in\CR.
\end{gather}
\begin{definition}\label{def2.6.4}
A pair $C=(C_0,C_1)$ of entire operator functions  $C_j:\bC\to \B (\cH)$ is said to be an entire Nevanlinna pair in $\cH$ if its restriction onto $\CR$ is a Nevanlinna pair and $\ran C(t)=\cH,\; t\in\bR$. The set of all entire Nevanlinna pairs in $\cH$ is denoted by $\EP$.
\end{definition}
Clearly, for a pair $C=(C_0, C_1)\in\EP$ the relations \eqref{2.18} (or equivalently \eqref{2.19}) hold for any $\l\in\bC$.

As is known (see e.g. \cite{DM00,DM17}) for each Nevanlinna pair $C=(C_0,C_1)$ in $\cH$ the equality
\begin {equation}\label{2.20}
\tau_C (\l):=\ker (C(\l))=\{\{h,h'\}\in
\cH^2: C_0(\l)h+C_1(\l)h'=0\},\quad \l\in\CR
\end{equation}
defines a relation-valued function $\tau_C\in\wt R (\cH)$.

Recall that an operator $X\in\B (\cH^2)$ is called $J_\cH$-unitary if $X^* J_{\cH} X=0$. If $X\in\B (\cH)$ is $J_\cH$-unitary, then $0\in\rho (X)$ and $X^{-1}$ is $J_{\cH}$-unitary as well.
\begin{lemma}\label{lem2.6.6}
Let $C=(C_0,C_1)$ be a Nevanlinna pair in $\cH$ and let $X\in\B (\cH^2)$ be a $J_\cH$-unitary operator. Then the equality $\wt C(\l)=C(\l)X^{-1},\; \l\in\CR,$ defines a Nevanlinna pair $\wt C= (\wt C_0, \wt C_1)$ in $\cH$ such that $\tau_{\wt C}(\l)= X \tau_C(\l)$. If in addition $C\in\EP$, then  the  equality $\wt C(\l)=C(\l)X^{-1},\; \l\in\bC,$ defines a pair $\wt C= (\wt C_0, \wt C_1)\in\EP$.
\end{lemma}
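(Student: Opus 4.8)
The plan is to verify directly that $\wt C(\l):=C(\l)X^{-1}$ satisfies the three defining conditions \eqref{2.19} of a Nevanlinna pair and then to compute its kernel. First I would record the linear algebra of $J_\cH$-unitary operators that the computation needs: since $X$ is $J_\cH$-unitary it is boundedly invertible and $X^{-1}$ is again $J_\cH$-unitary (both facts stated just before the lemma), and inverting and transposing the relation $X^*J_\cH X=J_\cH$ (using $J_\cH^{-1}=-J_\cH$) gives the companion identities $XJ_\cH X^*=J_\cH$ and $X^{-1}J_\cH(X^{-1})^*=J_\cH$. Since $C:\CR\to\B(\cH^2,\cH)$ is holomorphic and $X^{-1}\in\B(\cH^2)$ is a fixed operator, $\wt C(\l)=C(\l)X^{-1}$ is a holomorphic $\B(\cH^2,\cH)$-valued function on $\CR$, hence corresponds to a pair $(\wt C_0,\wt C_1)$ of holomorphic $\B(\cH)$-valued functions (the components being the linear combinations of $C_0,C_1$ with the block entries of $X^{-1}$).

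Next I would check \eqref{2.19} for $\wt C$. For $\l\in\CR$ I would compute
\[
\wt C(\l)J_\cH\wt C^*(\l)=C(\l)\,X^{-1}J_\cH(X^{-1})^*\,C^*(\l)=C(\l)J_\cH C^*(\l),
\]
and likewise $\wt C(\l)J_\cH\wt C^*(\ov\l)=C(\l)\,X^{-1}J_\cH(X^{-1})^*\,C^*(\ov\l)=C(\l)J_\cH C^*(\ov\l)$; hence the sign condition $i\,\im\l\cd\wt C(\l)J_\cH\wt C^*(\l)\leq 0$ and the symmetry condition $\wt C(\l)J_\cH\wt C^*(\ov\l)=0$ are inherited from the corresponding properties of $C$. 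For the range condition, $\ran\wt C(\l)=\ran(C(\l)X^{-1})=\ran C(\l)=\cH$ because $X^{-1}$ maps $\cH^2$ onto $\cH^2$. This shows $\wt C$ is a Nevanlinna pair in $\cH$, and so $\tau_{\wt C}\in\RH$ is well defined via \eqref{2.20}.

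To identify $\tau_{\wt C}$ I would argue, for $\l\in\CR$,
\[
\tau_{\wt C}(\l)=\ker\wt C(\l)=\{v\in\cH^2:C(\l)X^{-1}v=0\}=\{Xw:w\in\ker C(\l)\}=X\,\tau_C(\l),
\]
the middle step using that $v\mapsto X^{-1}v$ is a bijection of $\cH^2$; this is exactly the asserted equality $\tau_{\wt C}(\l)=X\tau_C(\l)$. Finally, for the entire case: if $C\in\EP$ then $\wt C(\l)=C(\l)X^{-1}$ is entire, its restriction onto $\CR$ is the Nevanlinna pair just constructed, and $\ran\wt C(t)=\ran C(t)=\cH$ for every $t\in\bR$, so $\wt C\in\EP$ by Definition \ref{def2.6.4}.

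The argument is essentially routine. The only point that needs care is the preliminary step—deriving $X^{-1}J_\cH(X^{-1})^*=J_\cH$ from the $J_\cH$-unitarity (and invertibility) of $X$—and, in the symmetry condition, keeping track of which of $\l,\ov\l$ appears inside each adjoint so that $C(\l)J_\cH C^*(\ov\l)=0$ is applied with the correct arguments; I do not expect any genuine obstacle beyond this bookkeeping.
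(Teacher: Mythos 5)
Your proof is correct and takes essentially the same route as the paper, which simply notes that the claims follow directly from the defining conditions \eqref{2.19}; you have just carried out that verification explicitly (the identity $X^{-1}J_\cH (X^{-1})^*=J_\cH$, the three conditions for $\wt C$, the kernel computation $\ker(C(\l)X^{-1})=X\ker C(\l)$, and the entire case). No gaps; note only that the paper's displayed definition ``$X^*J_\cH X=0$'' is a typo for $X^*J_\cH X=J_\cH$, which is the reading you correctly use.
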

\begin{proof}
The required statements directly follows from conditions \eqref{2.19} which define a Nevanlinna pair.
\end{proof}
\begin{lemma}\label{lem2.6.7}
Let $C=(C_0,C_1)$ be a Nevanlinna pair in $\cH$, let $\tau=\tau_C\in \wt R(\cH)$ be given by \eqref{2.20} and let $\tau_0\in R[\cH_0]$ and $\cK$ be the operator and multivalued parts of $\tau$ respectively (see \eqref{2.11}).Then:

{\rm (i)} $\cK=\ker  C_1(\l), \; \l\in\CR$;

{\rm (ii)} if
\begin{gather}\label{2.21}
\begin{split}
C_0(\l)= (C_{00}(\l), \, C_{01}(\l)):\cH_0\oplus\cK\to \cH\qquad\qquad\\
 C_1(\l)= (C_{10}(\l), \, 0):\cH_0\oplus\cK\to \cH, \quad \l\in \CR
\end{split}
\end{gather}
are block representations of $C_j(\l)$ and $\wh C_1(\l)\in\B (\cH)$ is given by
\begin{gather}\label{2.22}
\wh C_1(\l)=(C_{10}(\l), -C_{01}(\l)):\cH_0\oplus\cK\to \cH,
\end{gather}
then $\ker \wh C_1(\l)=\{0\}$ and
\begin{gather}\label{2.23}
\ran (\wh C_1^{-1}(\l) C_{00}(\l) )\subset \cH_0, \qquad \tau_0(\l)=-\wh C_1^{-1}(\l) C_{00}(\l),\;\;\,\l\in\CR.
\end{gather}
\end{lemma}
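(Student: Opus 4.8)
The plan is to read off both assertions directly from two structural facts about $\tau_C$: that its multivalued part is $\ker C_1(\l)$, and that, once the block decompositions \eqref{2.21} are in force, the operator part $\tau_0(\l)$ is pinned down by a single linear identity connecting $C_{00}(\l)$, $C_{10}(\l)$ and $\tau_0(\l)$. No analytic input beyond the fact that $\tau_C$ is a Nevanlinna relation-valued function is needed; from there on everything is linear algebra on the finite-dimensional space $\cH$.

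\emph{Part (i).} By the very definition of the multivalued part, $\mul\tau_C(\l)=\{h'\in\cH:\{0,h'\}\in\tau_C(\l)\}=\{h'\in\cH:C_1(\l)h'=0\}=\ker C_1(\l)$. Since $\mul\tau_C(\l)$ does not depend on $\l\in\CR$ and coincides with $\cK$ in \eqref{2.11} (cf.\ \cite{KreLan71}), this gives $\cK=\ker C_1(\l)$ for all $\l\in\CR$. In particular $C_1(\l)\up\cK=0$, which is exactly the second block representation in \eqref{2.21}.

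\emph{Part (ii).} I would first record the basic identity. By \eqref{2.11}, for each $h_0\in\cH_0=\dom\tau_0(\l)$ the pair $\{(h_0,0),(\tau_0(\l)h_0,0)\}$ belongs to $\tau_C(\l)$, so \eqref{2.20} together with the block forms \eqref{2.21} gives $C_{00}(\l)h_0+C_{10}(\l)\tau_0(\l)h_0=0$; letting $h_0$ run over $\cH_0$ yields
\begin{gather*}
C_{00}(\l)=-C_{10}(\l)\tau_0(\l),\qquad\l\in\CR,
\end{gather*}
and hence $\ran C_{00}(\l)\subset\ran C_{10}(\l)$. Next, from $\ran C(\l)=\cH$ and \eqref{2.21} one has $\ran C_{00}(\l)+\ran C_{01}(\l)+\ran C_{10}(\l)=\cH$, so combining with the last inclusion, $\ran C_{10}(\l)+\ran C_{01}(\l)=\cH$, i.e.\ $\ran\wh C_1(\l)=\cH$. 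Since $\wh C_1(\l)$ maps $\cH_0\oplus\cK=\cH$ into the finite-dimensional $\cH$, surjectivity forces injectivity: $\ker\wh C_1(\l)=\{0\}$ and $\wh C_1^{-1}(\l)\in\B(\cH)$. Finally, from $\wh C_1(\l)(g_0,0)=C_{10}(\l)g_0$ for $g_0\in\cH_0$, the displayed identity becomes $C_{00}(\l)h_0=-\wh C_1(\l)(\tau_0(\l)h_0,0)$, whence $\wh C_1^{-1}(\l)C_{00}(\l)h_0=-(\tau_0(\l)h_0,0)\in\cH_0$ for every $h_0\in\cH_0$; this is precisely $\ran(\wh C_1^{-1}(\l)C_{00}(\l))\subset\cH_0$ together with $\tau_0(\l)=-\wh C_1^{-1}(\l)C_{00}(\l)$.

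The single step that needs an idea rather than bookkeeping is the surjectivity of $\wh C_1(\l)$: a priori $\ran C(\l)=\cH$ only controls $\ran C_{00}(\l)+\ran C_{01}(\l)+\ran C_{10}(\l)$, and the point is that the ``extra'' summand $\ran C_{00}(\l)$ is absorbed into $\ran C_{10}(\l)$ by the identity $C_{00}=-C_{10}\tau_0$, so nothing is lost when passing to $\wh C_1$. A second, minor point is the logical order: that identity uses both part (i) (to know $C_1$ annihilates $\cK$) and the description \eqref{2.11} of $\tau_C(\l)$ (that every element of $\tau_C(\l)$ has its first component in $\cH_0$), so these must be in hand before the computation in (ii) is carried out.
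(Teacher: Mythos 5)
Your argument is correct, but it reaches part (ii) by a genuinely different route than the paper. The paper rotates the pair: it takes the $J_\cH$-unitary operator $X$ of \eqref{2.24}, applies Lemma \ref{lem2.6.6} to form $\wt C(\l)=C(\l)X^{-1}$, observes that $\wt C_1(\l)=\wh C_1(\l)$ and that $\tau_{\wt C}(\l)=X\tau_C(\l)=\tau_0(\l)P_{\cH_0}$ is an everywhere defined operator, and then gets $\ker \wh C_1(\l)=\mul\tau_{\wt C}(\l)=\{0\}$ and the formula \eqref{2.23} by solving $C_{00}(\l)P_{\cH_0}h+\wh C_1(\l)h'=0$. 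You instead stay with the original pair: from the graph description \eqref{2.11} you extract the identity $C_{00}(\l)=-C_{10}(\l)\tau_0(\l)$, absorb $\ran C_{00}(\l)$ into $\ran C_{10}(\l)$ inside $\ran C(\l)=\cH$ to get surjectivity of $\wh C_1(\l)$, and then invoke $\dim\cH<\infty$ (the standing assumption of this subsection) to convert surjectivity into injectivity before solving for $\tau_0$. Both proofs are complete; yours is more elementary in that it avoids the $J_\cH$-unitary machinery altogether, but its injectivity step is tied to finite dimension, whereas the paper's derivation of $\ker\wh C_1(\l)=\{0\}$ is structural (it comes from the operator part being single-valued) and, moreover, the rotated pair $\wt C$ constructed there is reused verbatim in the proof of Proposition \ref{pr3.8}, so the paper's detour pays for itself later.
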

\begin{proof}
(i) It follows from \eqref{2.20} that $\{0,h'\}\in\tau_C(\l) \Leftrightarrow h'\in\ker C_1(\l)$. This  yields statement (i).

(ii) One can easily verify that the equalities
\begin{gather}\label{2.24}
X=\begin{pmatrix} P_{\cH_0} & P_{\cK}\cr - P_{\cK} & P_{\cH_0} \end{pmatrix},\qquad  X^{-1}=\begin{pmatrix} P_{\cH_0} & -P_{\cK}\cr  P_{\cK} & P_{\cH_0} \end{pmatrix}
\end{gather}
defines a $J_{\cH}$-unitary operator $X\in\B (\cH\oplus \cH)$ and its inverse $X^{-1}$. Therefore by Lemma \ref{lem2.6.6} the equality $\wt C(\l)=C(\l)X^{-1}$ defines a Nevanlinna pair $\wt C= (\wt C_0, \wt C_1)$ in $\cH$ such that $\tau_{\wt C}(\l)= X \tau_C(\l)$. It follows from \eqref{2.21} and the second equality in \eqref{2.24} that
\begin{gather*}
\wt C_0(\l)= C_0(\l) P_{\cH_0}+C_1(\l) P_{\cK}=C_{00}(\l) P_{\cH_0}\\
\wt C_1(\l)= -C_0(\l) P_{\cK}+C_1(\l) P_{\cH_0}=-C_{01}(\l)P_{\cK}+ C_{10}(\l) P_{\cH_0}=\wh C_1(\l)
\end{gather*}
and, consequently,
\begin{gather}\label{2.26}
\tau_{\wt C}(\l)=\{\{h,h'\}\in\cH^2:C_{00}(\l) P_{\cH_0}h+\wh C_1(\l) h'=0\}, \quad \l\in\CR.
\end{gather}
On the other hand, \eqref{2.11} and the first equality in \eqref{2.24} yield $\tau_{\wt C}(\l)=\{\{h_0\oplus k, \tau_0(\l)h_0\}:h_0\in\cH_0, k\in\cK\}$. Hence
\begin{gather}\label{2.27}
\tau_{\wt C}(\l)=\tau_0(\l)P_{\cH_0}\in\B(\cH), \quad \l\in\CR.
\end{gather}
It follows from \eqref{2.26} that $\ker \wh C_1(\l)=\mul \tau_{\wt C}(\l)=\{0\}$ and consequently $\tau_{\wt C}(\l)=-\wh C_1^{-1}(\l) C_{00}(\l)P_{\cH_0}$. Combining this equality with \eqref{2.27} one gets \eqref{2.23}.
\end{proof}
\begin{proposition}\label{pr2.6.8}
Let $C=(C_0,C_1)\in \EP$. Then the equality \eqref{2.20} defines the relation-valued function $\tau_C\in\RM (\cH)$.
\end{proposition}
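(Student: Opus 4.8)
The plan is to reduce the statement to the finite-dimensional structure already established in Assertion~\ref{ass2.6.2} and Lemma~\ref{lem2.6.7}. Since $C=(C_0,C_1)\in\EP$, the functions $C_0,C_1$ are entire, and by Proposition (Lemma~\ref{lem2.6.7}) the multivalued part $\cK=\ker C_1(\l)$ is constant in $\l$ and the operator part $\tau_0$ of $\tau_C$ satisfies $\tau_0(\l)=-\wh C_1^{-1}(\l)C_{00}(\l)$, where $\wh C_1(\l)=(C_{10}(\l),-C_{01}(\l))$ is the injective operator from \eqref{2.22}. So by Definition~\ref{def2.6.1} it suffices to show that $\tau_0\in\Rm[\cH_0]$, i.e.\ that $\tau_0$ extends to a meromorphic function on all of $\bC$.

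First I would note that the block entries $C_{00},C_{01},C_{10}$ are restrictions/compressions of the entire functions $C_0,C_1$ to the fixed subspaces $\cH_0,\cK$ (with $P_{\cH_0},P_{\cK}$ constant projections), hence are themselves entire $\B(\cdot,\cH)$-valued functions; consequently $\wh C_1(\cd)$ and $C_{00}(\cd)$ are entire $\B(\cH)$-valued functions. Next, since $\dim\cH<\infty$, the function $\l\mapsto\det\wh C_1(\l)$ is an entire scalar function. The key point is that it is not identically zero: indeed, we already know $\ker\wh C_1(\l)=\{0\}$ for every $\l\in\CR$ (Lemma~\ref{lem2.6.7}), so $\det\wh C_1$ has no zeros off $\bR$, hence is not the zero function. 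Therefore $N:=\{\l\in\bC:\det\wh C_1(\l)=0\}$ is a discrete set (no finite limit points), contained in $\bR$, and on $\bC\setminus N$ one has $\wh C_1(\l)^{-1}=(\det\wh C_1(\l))^{-1}\,\mathrm{adj}\,\wh C_1(\l)$, which is holomorphic; thus $\tau_0(\l)=-\wh C_1(\l)^{-1}C_{00}(\l)$ is holomorphic on $\bC\setminus N$. At each point of $N$, the formula exhibits $\tau_0$ as a ratio of an entire matrix function by a scalar entire function with an isolated zero, so $\tau_0$ has at worst a pole there; hence $\tau_0$ is meromorphic on $\bC$, i.e.\ $\tau_0\in\Rm[\cH_0]$. (Alternatively, and perhaps cleaner, invoke Assertion~\ref{ass2.6.2}(i): $\tau_0$ is already holomorphic on $\bR\setminus N$ where $N\in\cF$, which is condition (b) there, hence $\tau_0\in\Rm[\cH_0]$; but one must check $\tau_0$ is genuinely holomorphic, not merely defined, on $\bR\setminus N$, which the $\det\wh C_1$ argument gives.) Then $\tau_C\in\RM(\cH)$ by Definition~\ref{def2.6.1}.

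The main obstacle I anticipate is the bookkeeping needed to legitimately pass from ``$C_0,C_1$ entire'' to ``$\wh C_1,C_{00}$ entire'' and to be sure that $\tau_0$ lands in $\cH_0$ and is a Nevanlinna \emph{operator} function there — but the first is immediate from the constancy of $P_{\cH_0},P_{\cK}$, and the latter facts ($\ran(\wh C_1^{-1}C_{00})\subset\cH_0$ and $\tau_0\in R[\cH_0]$) are already furnished by Lemma~\ref{lem2.6.7}. The only genuinely substantive step is the non-vanishing of $\det\wh C_1$, and that is handed to us for free by the off-real-axis injectivity established in Lemma~\ref{lem2.6.7}; everything else is the standard ``invertible entire matrix pencil has meromorphic inverse'' argument.
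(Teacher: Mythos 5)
Your proposal is correct and follows essentially the same route as the paper: invoke Lemma \ref{lem2.6.7} to write the operator part as $\tau_0(\l)=-\wh C_1^{-1}(\l)C_{00}(\l)$ with $\wh C_1$, $C_{00}$ entire, observe that $\det\wh C_1$ is entire and nonvanishing on $\CR$ so its zero set lies in $\cF$, and conclude holomorphy of the continuation off that set, hence $\tau_0\in\Rm[\cH_0]$ (via Assertion \ref{ass2.6.2}, (i), exactly as the paper does). The only difference is cosmetic: you additionally argue meromorphy directly through the adjugate formula, whereas the paper stops at the holomorphic continuation to $\bR\setminus Z$ and cites Assertion \ref{ass2.6.2}.
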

\begin{proof}
Since the restriction of $C$ onto $\CR$ is a Nevanlinna pair, it follows that $\tau_C\in \wt R (\cH)$. Let $\tau_0$ be the operator part of $\tau_C$. Then by   Lemma \ref{lem2.6.7} $\tau_0$  is defined  by the equality in \eqref{2.23} with  entire operator functions $\wh C_1$ and $C_{00}$. Let $\det (\l),\; \l\in\bC,$ be the determinant of the matrix of the operator $\wh C_1(\l)$ in some orthonormal basis of $\cH$ and let $Z=\{\l\in\bC: \det (\l)=0\}$. Since $\det$ is an entire function and $\det (\l)\neq 0,\; \l\in\CR,$ it follows that $Z\in\cF$ and $\ker \wh C_1(\l)=\{0\},\; \l\in\bC \setminus Z$. Therefore $\wh C_1^{-1}$ is holomorphic on the set $\bC\setminus Z$ and by inclusion in \eqref{2.23} $\ran (\wh C_1^{-1}(\l) C_{00}(\l) )\subset \cH_0,\; \l\in\bC\setminus Z $. Hence the equality in \eqref{2.23} with $\l\in \bC\setminus Z$ defines a holomorphic continuation of  $\tau_0$ onto $\bR\setminus Z$ and by Assertion \ref{ass2.6.2}, (i) $ \tau_0\in \Rm [\cH_0]$.
\end{proof}

\subsection{Boundary triplets and self-adjoint extensions}
In the following we denote by $A$ a closed symmetric linear relation (in particular closed not necessarily densely defined symmetric  operator) in a Hilbert space $\gH$.  Denote also by $n_\pm (A):=\dim \gN_\l(A^*), \;\l\in\bC_\pm,$ the deficiency indices of $A$, by $\ex $ the
set of all proper extensions of $A$ (i.e., the set of all
linear relations $\wt A$ in $\gH$ such that $A\subset\wt A\subset
A^*$) and  by $\cex$ the set of closed  extensions  $\wt A\in\ex $.

Below in this subsection we recall some definitions and results concerning boundary triplets for symmetric relations (see e.g. \cite{BHS,DM17,GorGor}).
\begin{definition}\label{def2.7}
A collection $\Pi=\bt$ consisting of a Hilbert space $\cH$ and linear mappings   $\G_j:A^*\to \cH, \; j\in\{0,1\},$  is called a
boundary triplet for $A^*$, if the mapping $\G=(\G_0,\G_1)^\top $ from $A^*$ into
$\cH\oplus\cH$ is surjective and the following  Green's
identity  holds:
\begin {equation*}
(f',g)-(f,g')=(\G_1  \wh f,\G_0 \wh g)- (\G_0 \wh f,\G_1
\wh g), \quad \wh f=\{f,f'\}, \; \wh g=\{g,g'\}\in A^*.
\end{equation*}
\end{definition}
\begin{remark}\label{rem2.7.0}
As is known $\mul A^*=\{0\}$  if and only if $\ov{\dom A}=\gH$. In this case $A$ and $A^*$ are densely defined operators and  operators $\G_j:A^*\to \cH$ in Definition \ref{def2.7} can be replaced with operators $\G_j:\dom A^*\to \cH, \; j\in\{0,1\}$.
\end{remark}
\begin{proposition}\label{pr2.7.1}  If $\Pi=\bt$ is a boundary triplet for $A^*$, then $n_+(A)=n_-(A)=\dim\cH$. Conversely, for each  symmetric relation $A\in\C (\gH)$  with   $n_+(A)=n_-(A)$ there exists a boundary triplet for $A^*$.
\end{proposition}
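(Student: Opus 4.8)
The plan is to prove the two implications separately: the direct one by locating a canonical self-adjoint extension inside $A^*$ and counting dimensions, the converse by the classical von Neumann construction.

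\emph{Direct implication.} Suppose $\Pi=\bt$ is a boundary triplet for $A^*$. The first step is to show that $A_0:=\ker\G_0$ is a self-adjoint extension of $A$. From Green's identity together with surjectivity of $\G=(\G_0,\G_1)^\top$ one gets $A\subset\ker\G$: if $\wh f\in A$ then $(f',g)-(f,g')=0$ for every $\wh g\in A^*$, so $(\G_1\wh f,\G_0\wh g)-(\G_0\wh f,\G_1\wh g)=0$ for all $\wh g$, and since $(\G_0\wh g,\G_1\wh g)$ runs through all of $\cH\oplus\cH$ this forces $\G_0\wh f=\G_1\wh f=0$. Hence $A\subset\ker\G\subset A_0\subset A^*$, so $A_0^*\subset A^*$; symmetry of $A_0$ is immediate from Green's identity, while for $\wh g\in A_0^*$ and any $\wh f\in A_0$ Green's identity reduces the defining relation $(f',g)-(f,g')=0$ to $(\G_1\wh f,\G_0\wh g)=0$, and since $\G_1$ maps $\ker\G_0$ onto $\cH$ (again by surjectivity of $\G$) we get $\G_0\wh g=0$, i.e.\ $\wh g\in A_0$. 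Thus $A_0=A_0^*$. Now $\G_0\colon A^*\to\cH$ is surjective with kernel $A_0$, so $\dim(A^*/A_0)=\dim\cH$. On the other hand, for every $\l\in\bC\setminus\bR$ we have $\l\in\rho(A_0)$, hence $\gN_\l(A_0)=\{0\}$ and $\ran(A_0-\l)=\gH$; this yields at once the von Neumann splitting $A^*=A_0\dotplus\wh\gN_\l(A^*)$, whence $\dim\gN_\l(A^*)=\dim(A^*/A_0)=\dim\cH$. Taking $\l\in\bC_+$ and $\l\in\bC_-$ gives $n_+(A)=n_-(A)=\dim\cH$.

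\emph{Converse.} Let $A\in\C(\gH)$ be symmetric with $n_+(A)=n_-(A)$. The construction I would use is the standard one. Put $\cH:=\gN_i(A^*)$ and fix a unitary operator $V\colon\gN_i(A^*)\to\gN_{-i}(A^*)$, which exists precisely because $\dim\gN_i(A^*)=\dim\gN_{-i}(A^*)$. By the von Neumann formula $A^*=A\oplus\wh\gN_i(A^*)\oplus\wh\gN_{-i}(A^*)$, every $\wh f\in A^*$ has a unique representation
\begin{equation*}
\wh f=\wh h+\{f_1,\,if_1\}+\{Vf_2,\,-iVf_2\},\qquad \wh h\in A,\quad f_1,f_2\in\gN_i(A^*),
\end{equation*}
and I would set $\G_0\wh f:=f_1-f_2$ and $\G_1\wh f:=i(f_1+f_2)$. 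Surjectivity of $\G=(\G_0,\G_1)^\top\colon A^*\to\cH\oplus\cH$ is clear, since prescribing $\G_0\wh f$ and $\G_1\wh f$ prescribes $f_1-f_2$ and $f_1+f_2$, hence $f_1$ and $f_2$. For Green's identity one computes $(f',g)-(f,g')$ directly from the decomposition: the terms containing an $A$-component vanish because $A$ is symmetric and $\wh\gN_{\pm i}(A^*)\subset A^*$, the mixed terms between the two defect subspaces collapse — here one uses that $V$ is unitary — leaving $(f',g)-(f,g')=2i\big[(f_1,g_1)-(f_2,g_2)\big]$, which is exactly $(\G_1\wh f,\G_0\wh g)-(\G_0\wh f,\G_1\wh g)$.

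I expect the one genuinely delicate point to be working consistently in the linear-relation setting rather than for operators: the argument rests on the relation versions of the von Neumann decomposition $A^*=A\oplus\wh\gN_i(A^*)\oplus\wh\gN_{-i}(A^*)$ and of the resolvent splitting $A^*=A_0\dotplus\wh\gN_\l(A^*)$ for self-adjoint $A_0$, so multivalued parts must be tracked carefully. These facts, however, are standard (cf.\ \cite{BHS,DM17,GorGor}), and with them in hand the remaining steps are the routine verifications sketched above.
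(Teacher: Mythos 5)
Your proof is correct, and it is essentially the standard argument: the paper states Proposition \ref{pr2.7.1} without proof, recalling it from the cited literature (\cite{BHS,DM17,GorGor}), where the direct part is proved exactly as you do (self-adjointness of $A_0=\ker\G_0$ plus the splitting $A^*=A_0\dotplus\wh\gN_\l(A^*)$) and the converse via the von Neumann decomposition with a unitary $V:\gN_i(A^*)\to\gN_{-i}(A^*)$. The only point worth a remark is that your dimension count compares algebraic dimensions of $A^*/A_0$, $\wh\gN_\l(A^*)$ and $\cH$, whereas $n_\pm(A)$ and $\dim\cH$ are Hilbert-space dimensions; in the paper's separable setting this is harmless (finite dimensions coincide, and an infinite algebraic dimension forces Hilbert dimension $\aleph_0$ on both sides), or one can note that $\G_0\up\wh\gN_\l(A^*)$ is a bijection onto $\cH$ and conclude equality of Hilbert dimensions directly.
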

With a boundary triplet $\Pi=\bt$ for $A^*$ one associates an extension $A_0=A_0^*\in \cex$ given by
\begin{gather}\label{2.31}
A_0=\ker\G_0=\{\wh f\in A^*: \G_0\wh f=0\}.
\end{gather}
\begin{proposition}\label{pr2.8}
Let $\Pi=\bt$ be a boundary triplet for $A^*$ an let $A_0=A_0^*\in \cex$ be given by \eqref{2.31}. Then  the equality $\G_1\up\wh \gN_\l(A^*)=M(\l)\G_0\up\wh \gN_\l(A^*), \quad \l\in\rho (A_0),$ correctly defines the operator function $M:\rho (A_0)\to \B(\cH)$ belonging to the class $R_u [\cH]$.
\end{proposition}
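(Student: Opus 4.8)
The plan is to show that for each $\l\in\rho(A_0)$ the restriction $\G_0\up\wh\gN_\l(A^*)$ is a bijection onto $\cH$, so that $M(\l)$ is well defined as a bounded operator, and then to verify the two defining properties of the class $R_u[\cH]$: the symmetry relation $M^*(\l)=M(\ov\l)$ and the strict positivity $\im\l\cd\im M(\l)>0$ on $\CR$. First I would recall the standard direct-sum decomposition $A^*=A_0\,\dot+\,\wh\gN_\l(A^*)$, valid for every $\l\in\rho(A_0)$ (this is the relation analogue of von Neumann's formula, and follows from $\rho(A_0)\subset\wh\rho(A)$ together with $n_+(A)=n_-(A)=\dim\cH$). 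Applying $\G_0$ to this decomposition and using $A_0=\ker\G_0$ from \eqref{2.31} together with the surjectivity of $\G=(\G_0,\G_1)^\top$, one gets that $\G_0\up\wh\gN_\l(A^*)$ maps $\wh\gN_\l(A^*)$ onto $\cH$. For injectivity: if $\wh f_\l=\{f_\l,\l f_\l\}\in\wh\gN_\l(A^*)$ and $\G_0\wh f_\l=0$, then $\wh f_\l\in A_0\cap\wh\gN_\l(A^*)$; since $\l\in\rho(A_0)$ forces $\gN_\l(A_0)=\{0\}$, we get $f_\l=0$. Hence $\G_0\up\wh\gN_\l(A^*)$ is a bijection and $M(\l):=\G_1(\G_0\up\wh\gN_\l(A^*))^{-1}\in\B(\cH)$ is correctly defined; boundedness follows from the closed graph theorem once one notes that $\wh\gN_\l(A^*)$ is closed and $\l\mapsto(\G_0\up\wh\gN_\l(A^*))^{-1}$ inherits holomorphy from the resolvent $(A_0-\l)^{-1}$.

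Next I would establish $M^*(\l)=M(\ov\l)$ and the positivity bound via Green's identity. Fix $\l\in\CR$ and pick $\wh f_\l=\{f_\l,\l f_\l\}$, $\wh g_{\ov\l}=\{g_{\ov\l},\ov\l g_{\ov\l}\}$ in $A^*$ with $f_\l\in\gN_\l(A^*)$, $g_{\ov\l}\in\gN_{\ov\l}(A^*)$. Plugging these into Green's identity, the left-hand side becomes $(\l f_\l,g_{\ov\l})-(f_\l,\ov\l g_{\ov\l})=0$, so
\begin{gather*}
(\G_1\wh f_\l,\G_0\wh g_{\ov\l})=(\G_0\wh f_\l,\G_1\wh g_{\ov\l}).
\end{gather*}
Since $\G_0\up\wh\gN_\l$ and $\G_0\up\wh\gN_{\ov\l}$ are bijections, write $h=\G_0\wh f_\l$, $k=\G_0\wh g_{\ov\l}$, so $\G_1\wh f_\l=M(\l)h$, $\G_1\wh g_{\ov\l}=M(\ov\l)k$; the identity reads $(M(\l)h,k)=(h,M(\ov\l)k)$ for all $h,k\in\cH$, i.e. $M(\l)^*=M(\ov\l)$. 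For the positivity, take $\wh g_{\ov\l}=\wh f_{\ov\l}$ with $f_{\ov\l}:=f_\l$ replaced by the same vector is not allowed, so instead apply Green's identity to the pair $\wh f_\l$ and itself; the left-hand side becomes $(\l f_\l,f_\l)-(f_\l,\l f_\l)=2i\,\im\l\,\|f_\l\|^2$, while the right-hand side is $(M(\l)h,h)-(h,M(\l)h)=2i\,\im(M(\l)h,h)$ with $h=\G_0\wh f_\l$. Therefore
\begin{gather*}
\im(M(\l)h,h)=\im\l\cd\|f_\l\|^2,\qquad h=\G_0\wh f_\l.
\end{gather*}

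The last point — and the place where a little care is needed — is to upgrade this to the strict inequality $\im\l\cd\im M(\l)>0$, i.e. to show $\|f_\l\|\geq c\|h\|$ with a constant $c>0$ uniform in $h$ (for fixed $\l$). This is exactly the statement that the map $h\mapsto f_\l$, which is $(\G_0\up\wh\gN_\l(A^*))^{-1}$ followed by projection onto the first component, is bounded below; equivalently that the $\gamma$-field $\g(\l):=\{\,\{\G_0\wh f_\l,f_\l\}:\wh f_\l\in\wh\gN_\l(A^*)\,\}$ is a bounded injective operator with bounded inverse on its range. Injectivity of $h\mapsto f_\l$ is clear ($f_\l=0\Rightarrow\wh f_\l=0\Rightarrow h=0$), and boundedness below follows from the closed graph theorem applied to the inverse map $f_\l\mapsto\G_0\wh f_\l=\G_0\{f_\l,\l f_\l\}$, which is bounded on the closed subspace $\gN_\l(A^*)$ because $\G_0$ is bounded on $A^*$ in the graph norm and on $\gN_\l(A^*)$ the graph norm is equivalent to the $\gH$-norm. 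Combining $\im(M(\l)h,h)=\im\l\,\|f_\l\|^2\geq\im\l\,c^2\|h\|^2$ (for $\im\l>0$; the case $\im\l<0$ is symmetric) gives $\im\l\cd\im M(\l)\geq \im\l\cd\im\l\,c^2 I>0$, so $M\in R_u[\cH]$ as claimed. I expect this uniform lower bound — rather than the algebraic identities, which are immediate from Green's formula — to be the only genuinely substantive step.
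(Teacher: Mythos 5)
The paper does not prove Proposition \ref{pr2.8}; it recalls it as a standard fact from the boundary-triplet literature (\cite{BHS,DM17,GorGor}), and your plan reproduces exactly that standard argument: the decomposition $A^*=A_0\,\dot+\,\wh\gN_\l(A^*)$ for $\l\in\rho(A_0)$ gives bijectivity of $\G_0\up\wh\gN_\l(A^*)$, Green's identity gives $M^*(\l)=M(\ov\l)$ and $\im (M(\l)h,h)=\im\l\,\|f_\l\|^2$, and the uniform lower bound $\|f_\l\|\geq c\|h\|$ (boundedness of $\G$ in the graph norm plus equivalence of the graph and $\gH$ norms on $\gN_\l(A^*)$) upgrades this to membership in $R_u[\cH]$; all of this is correct, and you rightly identify the lower bound as the only substantive analytic step. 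The one point you only assert rather than argue is holomorphy of $M$ on $\CR$ (required by the paper's definition of $R[\cH]$): this is indeed inherited from the resolvent of $A_0$, but the clean way to see it is the $\g$-field identity $\g(\l)=\bigl(I+(\l-\mu)(A_0-\l)^{-1}\bigr)\g(\mu)$ together with $M(\l)-M(\mu)^*=(\l-\ov\mu)\g(\mu)^*\g(\l)$, and a complete write-up should include that line.
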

The operator-functions  $M$ defined in Proposition \ref{pr2.8} is called the Weyl function of the triplet $\Pi$.

As is known a linear relation $\wt A=\wt A^*$ in a Hilbert space $\wt\gH\supset \gH$ is called an exit space extension of $A$ if $A\subset \wt A$ and there is no nontrivial subspace in $\wt \gH\ominus \gH$ reducing $\wt A$ (in the case $\ov{\dom A}=\gH$ each exit space extension $\wt A$ is an operator). Let $\wt A_j=\wt A_j^*\in \C (\wt\gH)$ be exit space extensions of $A$ and let $\gH_{rj}=\wt\gH_j\ominus \gH$,  so that $\wt\gH_j=\gH\oplus \gH_{rj},\; j\in\{1,2\}$. The extensions $\wt A_1$ and $\wt A_2$ are said to be equivalent if there  is a unitary operator $V\in\B (\gH_{r1},\gH_{r2})$ such that $\wt A_2=\wt U \wt A_1$ with the unitary operator $\wt U=(I_\gH\oplus V)\oplus (I_\gH\oplus V)\in \B(\wt\gH_1^2,\wt \gH_2^2)$. In the following we do not distinguish equivalent extensions.

An exit space extension $\wt A=\wt A^* \in \C (\wt\gH)$ of $A$ is called canonical if $\wt\gH=\gH$ (i.e., if $\wt A\in\cex$).

A description of all exit space  (in particular canonical) self-adjoint extensions $\wt A$ of $A$ in terms of the boundary triplet is given by the following theorem.
\begin{theorem}\label{th2.10}
Assume that $\Pi=\bt$ is a boundary triplet for  $A^*$. Then:

{\rm (i)} The
mapping
\begin {equation}\label{2.32}
\t\to  A_\t :=\{ \hat f\in A^*:\{\G_0  \hat f,\G_1 \hat f \}\in
\t\}
\end{equation}
establishes a bijective correspondence $\wt A=A_\t$ between all linear
relations  $\t$ in $\cH$ and all extensions $ \wt A\in\ex$. Moreover, the following holds: {\rm (a)} $A_\t\in\cex$ if and only if $\t\in\C (\cH)$;  {\rm (b)} $A_\t$ is  symmetric (self-adjoint) if  and only if $\t$ is symmetric (resp. self-adjoint). In this case $n_\pm (A)=n_\pm (\t)$.

{\rm (ii)} The equality
\begin{gather}\label{2.34}
P_\gH (\wt A_\tau -\l)^{-1}\up\gH =(A_{-\tau(\l)}-\l)^{-1}, \quad \l\in\CR
\end{gather}
establishes a bijective correspondence $\wt A=\wt A_\tau$ between all relation valued functions $\tau=\tau(\l)\in \RH$ and all exit space self-adjoint extensions $\wt A$ of $A$. Moreover, an extension $\wt A_\tau$ is canonical  if and only if $\tau(\l)\equiv \t(=\t^*), \; \l\in\CR$, in which case $\wt A_\tau=A_{-\t}$.
\end{theorem}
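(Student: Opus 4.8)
The plan is to treat the two parts separately. Part~(i) is essentially algebraic and comes straight out of Green's identity together with surjectivity of $\wh\G:=(\G_0,\G_1)^\top$; part~(ii) is the exit-space description, which I would reduce to the Kre\u in--Naimark formula for generalized resolvents and to Naimark's dilation theorem.

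\emph{Part (i).} The first step is to show $\ker\wh\G=A$. For $\wh a\in A$ and any $\wh g\in A^*$ the left-hand side of Green's identity vanishes (since $A=A^{**}\subseteq A^*$), so $(\G_1\wh a,\G_0\wh g)=(\G_0\wh a,\G_1\wh g)$ for all $\wh g\in A^*$, and surjectivity of $\wh\G$ forces $\G_0\wh a=\G_1\wh a=0$; the inclusion $\ker\wh\G\subseteq A$ is the same identity read backwards using $A=(A^*)^*$. Next, $\wh\G$ is bounded on $A^*$ with the graph norm: if $\wh f_n\to\wh f$ in $A^*$ and $\wh\G\wh f_n\to(h_0,h_1)$, letting $n\to\infty$ in Green's identity and using surjectivity gives $(h_0,h_1)=\wh\G\wh f$, so $\wh\G$ has closed graph. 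Being a bounded surjection of Banach spaces with kernel $A$, $\wh\G$ induces a homeomorphism $A^*/A\to\cH\oplus\cH$; this yields the bijection $\t\mapsto A_\t=\wh\G^{-1}(\t)$ between all linear relations $\t$ in $\cH$ and all linear manifolds between $A$ and $A^*$, together with statement~(a). For (b), rewrite Green's identity as $(f',g)-(f,g')=-\langle J_\cH\wh\G\wh f,\wh\G\wh g\rangle_{\cH^2}$: the boundary form on $A^*$ is the pull-back under $\wh\G$ of the nondegenerate form on $\cH\oplus\cH$ whose radical is $A=\ker\wh\G$, so for every $\t$ one obtains $A_\t^*=A_{\t^*}$, which gives at once both equivalences in~(b). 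Finally, for the deficiency indices one uses that $\G_0$ restricts to a bijection $\wh\gN_\l(A^*)\to\cH$ and that $\G_1\up\wh\gN_\l(A^*)=M(\l)\G_0\up\wh\gN_\l(A^*)$ with the Weyl function $M\in R_u[\cH]$: this identifies $\gN_\l(A_\t^*)=\gN_\l(A_{\t^*})$ with $\{h\in\cH:\{h,M(\l)h\}\in\t^*\}$, whose dimension equals $n_\pm(\t)$ for $\l\in\bC_\pm$ by a standard count resting on $\im\l\cd\im M(\l)>0$; hence $n_\pm(A_\t)=n_\pm(\t)$.

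\emph{Part (ii).} Let $\g(\cd)$ be the $\g$-field of $\Pi$ (for $\l\in\rho(A_0)\supseteq\CR$, $\g(\l)h$ is the unique $f\in\gN_\l(A^*)$ with $\G_0\{f,\l f\}=h$), and recall the Kre\u in resolvent formula for canonical extensions: for $\t\in\C(\cH)$ and $\l\in\rho(A_0)$ one has $\l\in\rho(A_\t)$ iff $0\in\rho(\t-M(\l))$, and then $(A_\t-\l)^{-1}=(A_0-\l)^{-1}+\g(\l)(\t-M(\l))^{-1}\g(\ov\l)^*$. Fix $\tau\in\RH$. For each $\l\in\CR$ the Nevanlinna sign condition on $\tau$ together with uniform strictness of $M$ ensures $0\in\rho(\tau(\l)+M(\l))$, hence $\l\in\rho(A_{-\tau(\l)})$ and the right-hand side of~\eqref{2.34} is the well-defined bounded operator $R_\l^\tau:=(A_0-\l)^{-1}-\g(\l)(\tau(\l)+M(\l))^{-1}\g(\ov\l)^*$. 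By the Kre\u in--Naimark theorem, as $\tau$ runs over $\RH$ the functions $\l\mapsto R_\l^\tau$ run over precisely the generalized (Shtraus) resolvents of $A$; by Naimark's dilation theorem each of them has the form $P_\gH(\wt A_\tau-\l)^{-1}\up\gH$ for a self-adjoint $\wt A_\tau$ in some $\wt\gH\supseteq\gH$, and the minimal such $\wt A_\tau$ (no nontrivial reducing subspace in $\wt\gH\ominus\gH$) is unique up to the equivalence fixed in the text. This is~\eqref{2.34}.

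It remains to see that $\tau\mapsto\wt A_\tau$ is onto and injective and to identify the canonical case. Onto: for any exit-space $\wt A=\wt A^*$ the map $\l\mapsto P_\gH(\wt A-\l)^{-1}\up\gH$ is a generalized resolvent of $A$, hence by Kre\u in--Naimark it equals $R_\cd^\tau$ for a unique $\tau\in\RH$, so $\wt A$ is equivalent to $\wt A_\tau$. Injective: equivalent extensions have the same compressed resolvent, and $\tau$ is recovered from $R_\l^\tau$ through $\g(\l),\g(\ov\l)^*$ and $M(\l)$ (uniqueness of the Kre\u in--Naimark parameter), so $R_\cd^{\tau_1}=R_\cd^{\tau_2}$ forces $\tau_1=\tau_2$. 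Canonical case: if $\tau(\l)\equiv\t=\t^*$, then $A_{-\t}$ is self-adjoint by~(b) and the canonical Kre\u in formula gives $R_\l^\tau=(A_{-\t}-\l)^{-1}$, a genuine resolvent on $\gH$, so $\wt\gH=\gH$ and $\wt A_\tau=A_{-\t}$; conversely, if $\wt A_\tau$ is canonical then $R_\l^\tau=(\wt A-\l)^{-1}$ for some $\wt A=\wt A^*\in\cex$, hence $\wt A=A_\t$ with $\t=\t^*$ by~(i), and comparing with $R_\l^\tau$ and using uniqueness of the parameter forces $\tau(\l)\equiv-\t$. The main obstacle is concentrated in~(ii): everything there rests on the Kre\u in--Naimark parametrization of all generalized resolvents (equivalently, all Shtraus families) of $A$ by $\tau\in\RH$ and on Naimark's dilation theorem with uniqueness of the minimal self-adjoint dilation. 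These are the substantive analytic inputs, which I would cite from \cite{BHS,DM17,GorGor} (or reprove by dilating the operator function $-(\tau+M)^{-1}$); by contrast, part~(i) and the verification that $R^\tau$ has the formal properties of a generalized resolvent are routine.
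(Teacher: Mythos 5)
Your proposal is correct and follows what is in effect the paper's own route: the paper states Theorem \ref{th2.10} without proof, recalling it from \cite{BHS,DM17,GorGor}, and your treatment --- Green's identity plus the induced homeomorphism $A^*/A\to\cH\oplus\cH$ for part (i), and the reduction of \eqref{2.34} to the Krein--Naimark formula together with Naimark dilation for part (ii) --- is exactly the standard argument; in particular the equivalence of \eqref{2.34} with \eqref{2.37} is what the paper itself records in Remark \ref{rem2.11}, (ii) with references \cite{LanTex77,Mal92}. The two ingredients you delegate to the literature (the parametrization of all generalized resolvents by $\tau\in\RH$ with uniqueness of the minimal self-adjoint dilation, and the count $\dim\ker(\t^*-M(\l))=n_\pm(\t)$ for $\l\in\bC_\pm$) are precisely what those sources supply, so this is consistent with the paper's level of detail; note also that the phrase ``$n_\pm(A)=n_\pm(\t)$'' in the statement is evidently a misprint for $n_\pm(A_\t)=n_\pm(\t)$, which is what you prove.
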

\begin{proposition}\label{pr2.10.1}
Let $\Pi=\bt $ be a boundary triplet for $A^*$ and  let $X\in\B (\cH^2)$ be a $J_\cH$-unitary operator with $J_\cH$ of the form \eqref{2.18.1}. Then:

{\rm(i)}  $\wt\Pi:=\{\cH,\wt\G_0,\wt\G_1\} $ with  $(\wt\G_0,\wt \G_1)^\top=X(\G_0, \G_1)^\top $ is a boundary triplet for $A^*$;

{\rm(ii)} if $\t\in\C (\cH)$ and $\wt A=A_\t\in\cex $ (in the triplet $\Pi$), then $\wt A=A_{\wt\t}$ (in the triplet  $\wt\Pi$) with $\wt\t=X\t$;

{\rm(iii)} if $\tau\in \wt R(\cH)$ and $\wt A=\wt A_\tau$ is an exit space extension of $A$ (in the triplet $\Pi$), then $\wt A=\wt A_{\wt\tau}$ (in the triplet $\wt\Pi$) with $\wt\tau\in \wt R(\cH)$ given by $-\wt\tau(\l)=X(-\tau(\l)), \; \l\in\CR$.
\end{proposition}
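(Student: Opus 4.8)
The plan is to establish the three assertions in turn, each time reducing to the description of extensions in a fixed boundary triplet together with the symplectic identity $X^*J_\cH X=J_\cH$ characterizing $J_\cH$-unitary operators (which in particular forces $0\in\rho(X)$, so that $X$ is a homeomorphism of $\cH\oplus\cH$). For (i) I would first rewrite Green's identity from Definition \ref{def2.7} in symplectic form: with $\G=(\G_0,\G_1)^\top:A^*\to\cH\oplus\cH$, a direct computation using $J_\cH$ of the form \eqref{2.18.1} gives $(f',g)-(f,g')=-(J_\cH\,\G\wh f,\G\wh g)_{\cH\oplus\cH}$ for all $\wh f,\wh g\in A^*$. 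Since $\wt\G:=(\wt\G_0,\wt\G_1)^\top=X\G$, the identity $X^*J_\cH X=J_\cH$ yields $-(J_\cH\,\wt\G\wh f,\wt\G\wh g)=-(J_\cH\,\G\wh f,\G\wh g)=(f',g)-(f,g')$, so Green's identity holds for $\wt\G$; moreover $\wt\G$ is surjective because $\G$ is and $X$ is bijective. Hence $\wt\Pi$ is a boundary triplet for $A^*$.

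For (ii) I would unwind the definition \eqref{2.32}. For a closed relation $\t$ in $\cH$ the extension formed in $\Pi$ is $A_\t=\{\wh f\in A^*:\G\wh f\in\t\}$, while the extension formed in $\wt\Pi$ from a closed relation $\wt\t$ is $\{\wh f\in A^*:X\G\wh f\in\wt\t\}=\{\wh f\in A^*:\G\wh f\in X^{-1}\wt\t\}$; these coincide precisely when $X^{-1}\wt\t=\t$, i.e.\ $\wt\t=X\t$, and $X\t\in\C(\cH)$ since $X$ is a homeomorphism. This is (ii). Substituting $\sigma=X\t$, the same computation gives the dual statement I will need below: the extension formed in $\wt\Pi$ from a closed relation $\sigma$ coincides with the extension formed in $\Pi$ from $X^{-1}\sigma$.

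For (iii) set $\wt A:=\wt A_\tau$. By part (i) the collection $\wt\Pi$ is a boundary triplet, so Theorem \ref{th2.10}(ii) applied to $\wt\Pi$ provides a unique $\wt\tau\in\RH$ with $\wt A=\wt A_{\wt\tau}$ in $\wt\Pi$; it remains only to identify $\wt\tau$. Writing the resolvent formula \eqref{2.34} in both triplets and using $\wt A_{\wt\tau}=\wt A_\tau=\wt A$ one obtains $(A_{-\tau(\l)}-\l)^{-1}=P_\gH(\wt A-\l)^{-1}\up\gH=(B(\l)-\l)^{-1}$ for $\l\in\CR$, where $A_{-\tau(\l)}$ is the proper extension \eqref{2.32} formed in $\Pi$ from the closed relation $-\tau(\l)$ and $B(\l)$ is the proper extension formed in $\wt\Pi$ from $-\wt\tau(\l)$. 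Hence $A_{-\tau(\l)}=B(\l)$ for each $\l\in\CR$. By the dual form of (ii), $B(\l)$ equals the proper extension formed in $\Pi$ from $X^{-1}(-\wt\tau(\l))$, so $A_{X^{-1}(-\wt\tau(\l))}=A_{-\tau(\l)}$ in $\Pi$ and the bijectivity in Theorem \ref{th2.10}(i) forces $X^{-1}(-\wt\tau(\l))=-\tau(\l)$, that is $-\wt\tau(\l)=X(-\tau(\l))$ for all $\l\in\CR$.

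These computations are routine; the step needing care is the bookkeeping in (iii), since \eqref{2.34} involves the \emph{non-self-adjoint} proper extensions $A_{-\tau(\l)}$ and $B(\l)$, so (ii) must be used in its form for arbitrary closed relations and the minus signs attached to $\tau(\l)$ and $\wt\tau(\l)$ have to be tracked throughout. An alternative to passing through Theorem \ref{th2.10}(ii) would be to define $\wt\tau$ directly by $-\wt\tau(\l):=X(-\tau(\l))$ and then check $\wt\tau\in\RH$ from Definition \ref{def2.4.1} via the linear-fractional-transformation argument underlying Lemma \ref{lem2.6.6}; routing through Theorem \ref{th2.10}(ii) avoids that verification entirely.
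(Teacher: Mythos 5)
Your proof is correct; the paper states Proposition \ref{pr2.10.1} without proof (it is a result recalled from the boundary-triplet literature), and your argument --- the symplectic form of Green's identity together with $X^*J_\cH X=J_\cH$ for (i), unwinding \eqref{2.32} and using the bijectivity of $\t\mapsto A_\t$ for (ii), and identifying the parameter through \eqref{2.34} and the uniqueness part of Theorem \ref{th2.10}(ii) for (iii) --- is exactly the standard one. Note that you silently (and correctly) read the paper's definition of $J_\cH$-unitarity as $X^*J_\cH X=J_\cH$; the condition $X^*J_\cH X=0$ printed in the paper is a typo, and your usage, which also squares with the paper's remark that $0\in\rho(X)$ and $X^{-1}$ is again $J_\cH$-unitary, is the intended one.
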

\begin{remark}\label{rem2.11}
(i) Let $\Pi=\bt $ be a boundary triplet for $A^*$, $\t\in\C (\cH)$ and $\wt A=A_{-\t}\in\cex$.  Consider the abstract boundary value problem
\begin{gather}\label{2.36}
\{f,\l f\}\in A^*, \quad \{\G_0\{f,\l f\},\G_0\{f,\l f\} \}\in -\t,
\end{gather}
where the second relation  is the abstract boundary condition depending on the  boundary parameter $\l\in\bC$. It is clear that for a given $\l$ the  set of all solutions of \eqref{2.36} (i.e., the set of all $f\in\dom A^*$ satisfying \eqref{2.36}) coincides with $\gN_\l(\wt A)$. Hence $\s_p(\wt A)$ coincides with the set of all $\l\in\bC$ for which the set of all solutions of \eqref{2.36} is nontrivial.

(ii) The same parametrization $\wt A=\wt A_\tau$  as in Theorem \ref{th2.10}, (ii) can be  given by means of the Krein formula for generalized resolvents
\begin{gather}\label{2.37}
P_\gH (\wt A_\tau -\l)^{-1}\up\gH = (A_0-\l)^{-1}- \g(\l)(\tau (\l)+M(\l))^{-1} \g^*(\ov\l), \quad \l\in\CR,
\end{gather}
where $A_0=A_0^*\in\cex$ is a (basic) extension \eqref{2.31}, $M(\l)$ is the Weyl function of the triplet $\Pi$ and $\g(\l)$ is a so-called $\g$-field (for more details see \cite{LanTex77,Mal92}).
\end{remark}
\section{Shtraus family and abstract eigenvector expansion}
\subsection{Shtraus family of linear relations}
Assume that $\wt\gH\supset\gH$ is a Hilbert space, $\gH_r:=\wt\gH\ominus\gH $,  $P_\gH$ is the orthoprojection in $\wt\gH$ onto $\gH$ and $\wt A=\wt A^*\in\C (\wt \gH)$ is an exit space extension of $A$. Recall that a linear relation $C_{\wt A}$ in $\gH$ given by
\begin{gather}\label{3.1}
C_{\wt A}:=P_{\gH}\wt A\up \gH=\{{\{f,f'\}\in \gH^2: \{f,f'\oplus f_r'\}\in \wt A}\;\;\text{\rm with some}\;\; f_r'\in\gH_r\}
\end{gather}
is called the compression of $\wt A$. It is easy to see that $C_{\wt A}$ is a (not necessarily closed) symmetric extension of $A$. Note also that the equality
\begin{gather}\label{3.2}
T_{\wt A}:=\{\{P_\gH f, P_\gH f'\}:\{f,f'\}\in \wt A \}
\end{gather}
defines a linear relation $T_{\wt A}$ in $\gH$ and $A\subset C_{\wt A}\subset T_{\wt A}\subset A^*$.
\begin{definition}\label{def3.1}$\,$\cite{BHS,DijLan20,Sht70} A family of linear relations $\SA (\l)$ in $\gH$ defined   by
\begin{gather}\label{3.3}
\SA (\l)=\{\{f,f'\}\in\gH^2: \{f\oplus f_r, f'\oplus\l f_r\}\in\wt A \; \text{ with some } \; f_r\in\gH_r\}, \quad \l\in\bC
\end{gather}
is called the Shtraus family of $\wt  A$.
\end{definition}
It is easy to see that $\SA (\l)=(P_\gH(\wt A-\l)^{-1}\up\gH  )^{-1}+\l $, which is equivalent to \begin{gather}\label{3.3.1}
(\SA (\l)-\l)^{-1}=P_\gH(\wt A-\l)^{-1}\up\gH , \quad \l\in\bC.
\end{gather}
Moreover, $\SA (t)$ is a symmetric extension of $A$ and $A\subset \SA (t)\subset T_{\wt A}\subset A^*$ for any $t\in\bR$.
\begin{lemma}\label{lem3.1.1}
Let $\Pi=\bt$ be a boundary triplet for $A^*$. Then for any $t\in\bR$ the following statements hold:

{\rm (i)} $A^t:=(A-t)^{-1}$ is a closed symmetric relation in $\gH$, $(A^t)^*=(A^*-t)^{-1}$ and the collection $\Pi_t=\{\cH,\G_{0t}, \G_{1t}\}$ with
\begin{gather}\label{3.4}
\G_{0t}\{f,f'\}=\G_0\{f',f+tf'\}, \qquad \G_{1t}\{f,f'\}= -\G_1\{f',f+tf'\},\quad \{f,f'\}\in (A^t)^*
\end{gather}
is a boundary triplet for  $(A^t)^*$.

{\rm (ii)} If $\wt A\in \ex$, then $ (\wt A-t)^{-1}\in {\rm ext} (A^t)$ and $\G_t (\wt A-t)^{-1}= -\G\wt A$ (here $\G=(\G_0,\G_1)^\top$ and  $\G_t=(\G_{0t},\G_{1t})^\top$).

{\rm (iii)} If $\tau\in\wt R (\cH)$ and $\wt A=\wt A_\tau$ is the exit space extension of $A$, then $\wt A^t:=(\wt A-t)^{-1}$ is an exit space extension of $A^t$ and $\wt A^t=\wt A^t_{\tau^t}$ (in the triplet $\Pi_t$) with $\tau^t\in \wt R(\cH)$ given by
\begin{gather}\label{3.5}
\tau^t=\tau^t(\l)=-\tau \left(t+\tfrac 1 \l\right),\quad \l\in\CR.
\end{gather}
\end{lemma}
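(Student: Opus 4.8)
The plan is to reduce the whole statement to the single observation that, for real $t$, the map $R\mapsto(R-t)^{-1}$ is a bijection of the set of linear relations in $\gH$ onto itself which preserves closedness, reverses inclusions, and satisfies $\bigl((R-t)^{-1}\bigr)^*=(R^*-t)^{-1}$ (since $(R-t)^*=R^*-t$ and taking adjoints commutes with inversion). This gives at once that $A^t=(A-t)^{-1}$ is closed, that $(A^t)^*=(A^*-t)^{-1}$, and, from $A\subset A^*$, that $A^t\subset(A^t)^*$; so $A^t$ is a closed symmetric relation. It is convenient to write $\Phi\colon(A^t)^*\to A^*$ for the bijection $\Phi\{f,f'\}=\{f',f+tf'\}$ (so that $\Phi^{-1}\{g,g'\}=\{g'-tg,g\}$ and, for an arbitrary relation $R$ in $\gH$, $\Phi^{-1}(R)=(R-t)^{-1}$); by \eqref{3.4} one then has $\G_{0t}=\G_0\circ\Phi$ and $\G_{1t}=-\G_1\circ\Phi$.

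To finish (i) I would check the two boundary-triplet axioms for $\Pi_t$. Surjectivity of $\G_t=(\G_{0t},\G_{1t})^\top\colon(A^t)^*\to\cH\oplus\cH$ is immediate, as $\G_t=\diag(I_\cH,-I_\cH)\circ(\G_0,\G_1)^\top\circ\Phi$ is a composition of surjective maps. Green's identity for $\Pi_t$ is obtained by substituting $\Phi\wh f=\{f',f+tf'\}$ and $\Phi\wh g=\{g',g+tg'\}$ into Green's identity for $\Pi$: the terms carrying the factor $t$ cancel on the left, and the minus sign in $\G_{1t}=-\G_1\circ\Phi$ is precisely what makes the right-hand side reassemble into $(\G_{1t}\wh f,\G_{0t}\wh g)-(\G_{0t}\wh f,\G_{1t}\wh g)$.

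Part (ii) is then almost a restatement. From $A\subset\wt A\subset A^*$ one gets $A^t\subset(\wt A-t)^{-1}\subset(A^t)^*$, i.e. $(\wt A-t)^{-1}\in{\rm ext}(A^t)$, and since $\Phi$ restricts to a bijection of $(\wt A-t)^{-1}=\Phi^{-1}(\wt A)$ onto $\wt A$, we get $\G_t(\wt A-t)^{-1}=\{\{\G_0\wh g,-\G_1\wh g\}:\wh g\in\wt A\}=-\G\wt A$, where $-R:=\{\{h,-h'\}:\{h,h'\}\in R\}$ (the graph of $-S$ when $R={\rm gr}\,S$). Specialising to $\wt A=A_\t$, for which $\G A_\t=\t$, this reads $(A_\t-t)^{-1}=(A^t)_{-\t}$ in the triplet $\Pi_t$ — a fact I will use in the proof of (iii).

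For (iii) I would first observe that $\wt A^t:=(\wt A-t)^{-1}$ really is an exit space extension of $A^t$: the inclusion $A^t\subset\wt A^t$ is clear, and since both operations $(\cdot)^{-1}$ and $(\cdot)+t$ respect orthogonal decompositions of relations, a subspace of $\gH_r$ reduces $\wt A^t$ if and only if it reduces $\wt A$, so no nontrivial one does. Hence Theorem \ref{th2.10}(ii), applied to $A^t$ with the triplet $\Pi_t$, gives $\wt A^t=\wt A^t_\sigma$ for a unique $\sigma\in\RH$, and it remains to identify $\sigma$ with $\tau^t$. Combining \eqref{3.3.1} with \eqref{2.34} yields $\SA(\l)=A_{-\tau(\l)}$ and, likewise, $S_{\wt A^t}(\mu)=(A^t)_{-\sigma(\mu)}$ in $\Pi_t$. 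Computing the left-hand side directly from \eqref{3.3}, using $\{u,v\}\in(\wt A-t)^{-1}\Leftrightarrow\{v,u+tv\}\in\wt A$: for $\mu\in\CR$ (hence $\mu\neq0$) one has $\{f,f'\}\in S_{\wt A^t}(\mu)$ iff there is $f_r\in\gH_r$ with $\{f'\oplus\mu f_r,(f+tf')\oplus(1+t\mu)f_r\}\in\wt A$, and putting $g_r=\mu f_r$, $\nu=t+\tfrac1\mu$ this is exactly $\{f',f+tf'\}\in\SA(\nu)$; thus $S_{\wt A^t}(\mu)=\Phi^{-1}(\SA(\nu))=(A_{-\tau(\nu)}-t)^{-1}=(A^t)_{\tau(\nu)}$ by the identity from (ii). Comparing the two expressions for $S_{\wt A^t}(\mu)$ and using that $\t\mapsto(A^t)_\t$ is injective gives $-\sigma(\mu)=\tau(\nu)$, i.e. $\sigma(\mu)=-\tau\bigl(t+\tfrac1\mu\bigr)=\tau^t(\mu)$. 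The main difficulty throughout is purely bookkeeping — keeping straight the minus sign in $\G_{1t}$, the convention for $-R$, and the minus in the index of $A_{-\tau}$ — together with the small extra check that $\tau^t$ is again a Nevanlinna relation, which holds because $\mu\mapsto t+\tfrac1\mu$ maps $\bC_+$ into $\bC_-$ and the extra minus sign restores the correct sign of the imaginary part.
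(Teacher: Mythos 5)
Your proposal is correct and follows essentially the same route as the paper: parts (i) and (ii), which the paper declares obvious, are verified by the same routine bookkeeping with the map $\{f,f'\}\mapsto\{f',f+tf'\}$, and for (iii) your chain of equivalences yielding $S_{\wt A^t}(\mu)=\bigl(\SA(t+\tfrac1\mu)-t\bigr)^{-1}$, combined with $\SA(\l)=A_{-\tau(\l)}$ from \eqref{2.34} and \eqref{3.3.1}, is exactly the paper's computation. The only cosmetic difference is the final identification: you invoke the uniqueness of the parameter in Theorem \ref{th2.10}, (ii), whereas the paper reads off $\G_t S_{\wt A^t}(\l)=-\tau^t(\l)$ directly and concludes $\wt A^t=\wt A^t_{\tau^t}$ — the same fact in slightly different packaging.
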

\begin{proof}
Statement {\rm (i)} and {\rm (ii)} are obvious.

(iii) Let $\tau\in\wt R(\cH)$ and $\wt A=\wt A_\tau$. We show that \begin{gather}\label{3.6}
S_{\wt A^t}(\l)=\left(\SA \left(t+ \tfrac 1 \l \right)-t\right)^{-1}, \quad \l\in\CR.
\end{gather}
Let $\l\in\CR$. Then by \eqref{3.3} $\{f,f'\}\in S_{\wt A^t}(\l)$ if and only if $\{f\oplus f_r, f'\oplus\l f_r\}\in\wt A^t$  with some $f_r\in\gH_r$. Now  the equivalences
\begin{gather*}
\{f\oplus f_r, f'\oplus\l f_r\}\in\wt A^t\iff \{f'\oplus \l f_r, (f+tf')\oplus (1+\l t) f_r\}\in\wt A\iff\\
\{f',f+tf'\}\in \SA \left(t+\tfrac 1 \l \right)\iff \{f,f'\}\in \left( \SA\left( t+\tfrac 1 \l \right)-t  \right)^{-1},
\end{gather*}
proves \eqref{3.6}. Next, by \eqref{2.34} and \eqref{3.3.1} the equality $\wt A=\wt A_\tau$ is equivalent to $\SA (\l)=A_{-\tau(\l)}$, that is $\G \SA(\l)=-\tau(\l), \; \l\in\CR$. Hence by \eqref{3.6} and statement (ii)
\begin{gather*}
\G_t S_{\wt A^t}(\l)=-\G \SA\left(t+\tfrac 1 \l \right)=\tau \left(t+\tfrac 1 \l \right)=-\tau ^t(\l),\quad \l\in\CR,
\end{gather*}
which implies that $\wt A^t=\wt A^t_{\tau^t}$.
\end{proof}
In the following theorem we characterize in terms of the parameter $\tau$ the Shtraus family $\SA (t )$, $t\in\bR$, corresponding to the exit space extension $\wt A=\wt A_\tau$ of $A$.
\begin{theorem}\label{th3.2}
Assume that $\Pi=\bt$ is a boundary triplet for $A^*$, $\tau\in\RCH$, $\wt A=\wt A_\tau$ is the corresponding exit space self-adjoint extension of $A$ and $\SA (\l)$ is the Shtraus family of $\wt A$. Moreover, let $\tau_0\in R_c[\cH_0]$ and $\cK$ be the operator and multivalued parts of $\tau$ respectively (see \eqref{2.11}), let $t\in\bR$ and let  $\cB_{\tau_0}(t)\in\B (\cH_0)$ and $D_{\tau_0}(t):\dom D_{\tau_0}(t)\to \cH_0 \;(\dom D_{\tau_0}(t)\subset\cH_0)$ be operators corresponding to $\tau_0$ in accordance with Proposition \ref{pr2.4}, (ii). Assume also that $\ran \cB_{\tau_0}(t)$ is closed. Then $\SA (t)=A_{\eta(t)}=\{\wh f\in A^*:\{\G_0\wh f,\G_1 \wh f\}\in\eta(t)\}$, where $\eta(t)=\eta_\tau(t)$ is given by
\begin{gather}\label{3.10}
\eta_\tau(t)=\{\{h,-D_{\tau_0}(t)h+\cB_{\tau_0}(t)h_0+\kappa \}: h\in\dom D_{\tau_0}(t), h_0\in  \cH_0, \kappa\in\cK\}.
\end{gather}
\end{theorem}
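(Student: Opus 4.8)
The plan is to compute the Shtraus family $\SA(t)$ directly from its definition \eqref{3.3} via the Weyl function and the Krein-type formula, using the reduction to the inverse relation $A^t=(A-t)^{-1}$ furnished by Lemma \ref{lem3.1.1}. First I would recall from \eqref{3.3.1} that $(\SA(\l)-\l)^{-1}=P_\gH(\wt A-\l)^{-1}\up\gH$, so by Theorem \ref{th2.10}(ii), namely \eqref{2.34}, one has $\SA(\l)=A_{-\tau(\l)}$ for $\l\in\CR$, i.e. $\SA(\l)=\{\wh f\in A^*:\{\G_0\wh f,\G_1\wh f\}\in-\tau(\l)\}$ on the non-real line. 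Thus everything reduces to identifying the limiting relation $\lim_{y\to0}\bigl(-\tau(t+iy)\bigr)$ in the appropriate graph sense, and showing this limit is exactly $\eta_\tau(t)$. The passage to the real point $t$ is where I expect the genuine work to lie.

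Next I would handle the multivalued part. By \eqref{2.11} we have $\tau(\l)=\tau_0(\l)\oplus\wh\cK$ with $\tau_0\in R_c[\cH_0]$, and $\cK$ is $\l$-independent; so $-\tau(\l)=\{\{h,-\tau_0(\l)h\oplus k\}:h\in\cH_0,\ k\in\cK\}$ after identifying $\cH=\cH_0\oplus\cK$. Hence it suffices to understand the boundary behavior of the operator part $-\tau_0(t+iy)$ as $y\to0$. Here I would invoke Proposition \ref{pr2.4}(ii) applied to $\tau_0$: it produces $\cB_{\tau_0}(t)=s\text{-}\lim_{y\to0}(-iy\,\tau_0(t+iy))\ge0$ and the operator $D_{\tau_0}(t)h=\lim_{y\to0}\tau_0(t+iy)h$ on $\dom D_{\tau_0}(t)\subset\ker\cB_{\tau_0}(t)$. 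The claim \eqref{3.10} asserts precisely that the graph limit of $-\tau_0(t+iy)$ consists of pairs $\{h,-D_{\tau_0}(t)h+\cB_{\tau_0}(t)h_0\}$ with $h\in\dom D_{\tau_0}(t)$, $h_0\in\cH_0$. The ``$\supseteq$'' direction should follow from the definitions of $\cB_{\tau_0}(t)$ and $D_{\tau_0}(t)$ once one checks that $\ran\cB_{\tau_0}(t)$ enters correctly — and this is where the hypothesis that $\ran\cB_{\tau_0}(t)$ is closed (equivalently, by $R_c$-membership, that the relevant operator behaves well) is used, so that $\cB_{\tau_0}(t)$ maps onto a subspace and the second components fill out the stated set. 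The ``$\subseteq$'' direction, i.e. that no further pairs occur, is the converse monotonicity/Nevanlinna estimate.

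Concretely, I would argue as follows. Write $\tau_0(t+iy)=\cB_{\tau_0}(t)/(iy)\cdot(\text{correction})$ — more precisely, decompose $\tau_0(t+iy)=-\tfrac1{iy}\bigl(\cB_{\tau_0}(t)+o(1)\bigr)+\tau_{0,{\rm reg}}(t+iy)$ near $t$, splitting off the ``pole-like'' part supported on $\ran\cB_{\tau_0}(t)$ from the part with a finite limit on $\dom D_{\tau_0}(t)$. On $\ker\cB_{\tau_0}(t)\supset\dom D_{\tau_0}(t)$ the function $\tau_0(t+iy)$ stays bounded and converges to $D_{\tau_0}(t)$; on $\ran\cB_{\tau_0}(t)$ the imaginary part blows up like $1/y$, so in the graph limit the ``output'' direction absorbs all of $\cB_{\tau_0}(t)\cH_0=\ran\cB_{\tau_0}(t)$ freely, which is why $h_0$ ranges over all of $\cH_0$ in \eqref{3.10}. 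The closedness of $\ran\cB_{\tau_0}(t)$ guarantees the orthogonal splitting $\cH_0=\ker\cB_{\tau_0}(t)\oplus\ran\cB_{\tau_0}(t)$ is clean and that no boundary effects are lost; the $R_c$-hypothesis on $\tau_0$ is what makes Proposition \ref{pr2.4}(ii) applicable with these operators genuinely bounded. Assembling the operator part with the constant $\cK$-summand from \eqref{2.11} then gives $\lim_{y\to0}(-\tau(t+iy))=\eta_\tau(t)$ in the graph topology, and finally, since $\SA(t)$ (the symmetric extension of $A$ arising as the strong resolvent limit $\lim_{y\to0}\SA(t+iy)$, which exists and equals the Shtraus relation by \cite{BHS,Sht70}) corresponds under the boundary triplet to this graph limit of parameters, one concludes $\SA(t)=A_{\eta_\tau(t)}$.

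The main obstacle I anticipate is the careful justification of interchanging the strong-resolvent limit $y\to0$ in $(\SA(t+iy)-\l)^{-1}=P_\gH(\wt A-(t+iy))^{-1}\up\gH$ with the passage to the parametrizing relation, and the precise graph-limit bookkeeping when $\cB_{\tau_0}(t)\ne0$ and $D_{\tau_0}(t)$ is unbounded/not closed — i.e. showing the limit set is neither too small (the $h_0$-freedom) nor too large (no spurious pairs from the divergent part). Everything else — the multivalued-part splitting via \eqref{2.11}, the identification $\SA(\l)=A_{-\tau(\l)}$ off the real axis, and the application of Proposition \ref{pr2.4}(ii) — is routine given the results already in the excerpt.
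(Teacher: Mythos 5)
There is a genuine gap. Your argument rests on two bridging claims: (a) that $\SA(t)$ is the strong resolvent limit of $\SA(t+iy)=A_{-\tau(t+iy)}$ as $y\to 0$, and (b) that the graph limit of $-\tau(t+iy)$ equals $\eta_\tau(t)$. Neither is true in general, and the scalar example $\tau(\l)=\sqrt{\l}$ (with $\im\sqrt\l>0$ on $\bC_+$), $t=0$, defeats both at once: here $\cB_{\tau}(0)=\lim_{y\to 0}(-iy\sqrt{iy})=0$ and $\tfrac 1y\im\tau(iy)\sim y^{-1/2}\to\infty$, so $\dom D_{\tau}(0)=\{0\}$, $\eta_\tau(0)=\{\{0,0\}\}$ and the theorem (consistently with Theorem \ref{th3.4}, (iii)) gives $\SA(0)=A_{\eta_\tau(0)}=A$; on the other hand $-\tau(iy)\to 0$ in norm, so any reasonable graph or resolvent limit of $-\tau(iy)$ is the \emph{zero operator} on $\cH$, and the resolvent limit of $\SA(iy)=A_{-\tau(iy)}$ is $\ker\G_1\supsetneq A$. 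Thus the Shtraus relation at a real point, defined algebraically by \eqref{3.3}, is not obtained by letting $\l\to t$ in $\SA(\l)=A_{-\tau(\l)}$, and $\eta_\tau(t)$ is not a graph limit of $-\tau(t+iy)$: its domain $\dom D_{\tau_0}(t)$ can be strictly smaller than the set where $\tau_0(t+iy)$ converges, so the limit contains exactly the ``spurious pairs'' you propose to exclude, and they cannot be excluded by any soft limiting argument. Your auxiliary claim that $\tau_0(t+iy)$ stays bounded on $\ker\cB_{\tau_0}(t)$ is also false (take $\tau(\l)=\log\l$ at $t=0$), and the attribution of claim (a) to \cite{BHS,Sht70} is not supported by those sources.

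For comparison, the paper's proof avoids limits altogether: it transfers the real point $t$ to $\infty$ by passing to $A^t=(A-t)^{-1}$ and the boundary triplet $\Pi_t$ of Lemma \ref{lem3.1.1}, observes from \eqref{3.3.1} that $(\SA(t)-t)^{-1}$ is precisely the compression $C_{\wt A^t}$ of $\wt A^t=\wt A^t_{\tau^t}$ with $\tau^t(\l)=-\tau\bigl(t+\tfrac 1\l\bigr)$, and then quotes the characterization of compressions from \cite[Theorem 3.8]{Mog19} together with the identities \eqref{3.11.1} ($\cB_{\tau_0^t\infty}=\cB_{\tau_0}(t)$, $D_{\tau_0^t\infty}=-D_{\tau_0}(t)$), which yields \eqref{3.10}. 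If you want a self-contained direct proof, you must work with the membership condition $\{f\oplus f_r,\,f'\oplus t f_r\}\in\wt A$ in \eqref{3.3} (for instance through Krein's formula \eqref{2.37} or an operator model for $\tau$), not with limits of $\SA(t+iy)$.
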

\begin{proof}
Let $\tau^t\in \RH$ be given by \eqref{3.5}. Then according to Lemma \ref{lem3.1.1} $A^t=(A-t)^{-1}$ is a symmetric relation in $\gH$, $\wt A^t =(\wt A-t)^{-1}$ is an exit space self-adjoint extension of $A^t$ and $\wt A^t=\wt A_{ \tau_t}^t$ in the triplet $\Pi_t=\{\cH,\G_{0t},\G_{1t}\}$ for $(A^t)^* $ given by \eqref{3.4}. It is clear that the multivalued part of $\tau^t$ coincides with $\cK$, while the operator part $\tau_0^t$ of $\tau^t$ is
\begin{gather}\label{3.11}
\tau_0^t(\l)=-\tau_0 \left(t+\tfrac 1 \l\right),\quad \l\in\CR.
\end{gather}
Hence $\tau_0^t\in R_c[\cH_0]$ and, consequently, $\tau^t\in \RCH$. Moreover, by \eqref{3.11}
\begin{gather}\label{3.11.1}
\dom D_{\tau_0^t \infty}=\dom D_{\tau_0}(t), \quad D_{\tau_0^t \infty}=-D_{\tau_0}(t),\quad  \cB_{\tau_0^t\infty}=\cB_{\tau_0}(t),
\end{gather}
where $\cB_{\tau_0^t\infty}\in\B (\cH_0)$ and $D_{\tau_0^t \infty}:\dom D_{\tau_0^t \infty}\to\cH_0\; (\dom D_{\tau_0^t \infty}\subset\cH_0)$ are the operators corresponding to $\tau_0^t $ in accordance with Proposition \ref{pr2.4}, (i). Hence $\ran \cB_{\tau_0^t\infty}$ is closed.

Let $\G_t=(\G_{0t}, \G_{1t})^\top$ and let $C_{\wt A^t}$ be the compression of $\wt A^t$. Then by \cite[Theorem 3.8]{Mog19} $\G_t C_{\wt A^t}=-\eta (t)$, where
\begin{gather}\label{3.12}
\eta (t)=\{\{h, D_{\tau_0^t \infty}h+\cB_{\tau_0^t\infty}h_0+k\}: h\in\dom D_{\tau_0^t \infty}, h_0\in\cH_0, k\in\cK\}.
\end{gather}
It follows from \eqref{3.3.1} that $(\SA(t)-t)^{-1}=C_{\wt A^t}$ and by Lemma \ref{lem3.1.1}, (ii) $\G \SA (t)=-\G_t C_{\wt A^t}=\eta(t)$. Hence $\SA(t)=A_{\eta(t)}$ with $\eta(t)=\eta_\tau(t)$ given by \eqref{3.12} and \eqref{3.11.1} yields \eqref{3.10}.
\end{proof}
Recall that two extensions  $\wt A_1,\wt A_2 \in\cex$ are called transversal if $\wt A_1\cap \wt A_2=A$ and $\wt A_1\wh + \wt A_2:= \{\wh f+\wh g: \wh f\in\wt A_1,\, \wh g\in\wt A_2\}=A^*$.

Let as before $\Pi=\bt$ be a boundary triplet for $A^*$ and let $A_0(=\ker\G_0)$ be a basic self-adjoint extension of $A$ in the Krein formula \eqref{2.37}. In the following two Theorems \ref{th3.4} and \ref{th3.5} we characterise in terms of the Nevanlinna parameter $\tau$ exit space extensions $\wt A=\wt A_\tau$ of $A$ such that the Shtraus family $\SA (t)$ satisfies one of the following extremal conditions: (i) $\SA (t)\subset A_0$ (in particular, $\SA (t)= A_0$ ); (ii) $\SA (t)$ is self-adjoint and transversal with $A_0$. For the compression $\CAt$ of $\wt A_\tau$ similar results were obtained in our paper \cite[Theorems 3.9 and 3.16]{Mog19}. Similarly to Theorem \ref{th3.2} the proof of Theorems \ref{th3.4} and \ref{th3.5}
can be easily obtained by application of results from \cite{Mog19} to the symmetric relation $A^t=(A-t)^{-1}$ and the boundary triplet $\Pi_t=\{\cH,\G_{0t},\G_{1t}\}$ for $(A^t)^*$ (see \eqref{3.4}). Therefore we omit the proof.
\begin{theorem}\label{th3.4}
Assume that $\Pi=\bt$ is a boundary triplet for $A^*$,  $A_0=\ker \G_0$, $\tau \in \wt R(\cH)$,  $\wt A=\wt A_\tau$ is the exit space extension of $A$ and $t$ is a real point. Then:

{\rm (i)} The following statements {\rm(a1)} and  {\rm(a2)} are equivalent:

{\rm (a1)} $T_{\wt A}$ is closed and $\SA (t)\subset A_0$ (for $T_{\wt A}$ see \eqref{3.2});

{\rm (a2)} $\tau \in \RCH$ and the operator part  $\tau_0\in R_c[\cH_0]$ of $\tau$ satisfies
\begin{gather}\label{3.16}
\lim_{y\to 0}\tfrac 1 y \im (\tau_0(t+iy)h,h)=\infty, \quad h\in\cH_0, \;\; h\neq 0.
\end{gather}
Moreover, if statement {\rm (a1)} (equivalently {\rm (a2)} ) is valid and $\ran \cB_{\tau_0}(t)$ is closed, then $\SA(t)$ is closed and $\SA(t)=\{\wh f\in A^*: \G_0\wh f=0, \; \G_1\wh f\in \ran \cB_{\tau_0}(t)\oplus\cK\}$ (for $\cK$ see \eqref{2.11}).
\vskip 2mm
{\rm (ii)} The following statements ${\rm(a1')}$ and  ${\rm(a2')}$ are equivalent:
\vskip 2mm
{\rm (a1$^\prime$)} $T_{\wt A}$ is closed and $\ov{\SA (t)}= A_0$; \quad {\rm (a2$^\prime$)} $\tau \in \RCH$ and $\ker\cB_{\tau_0}(t)=\{0\}$.

Moreover, if statement {\rm (a1$^\prime$)} (equivalently {\rm (a2$^\prime$)} ) is valid and $\ran \cB_{\tau_0}(t)$ is closed, then $\SA(t)=A_0$.
\vskip 2mm
{\rm (iii)} $T_{\wt A}$ is closed  and $\SA (t)= A$ if and only if $\tau\in R_c[\cH],\; \cB_\tau(t)=0$  and \eqref{3.16} holds with $\tau$ in place of $\tau_0$.
\end{theorem}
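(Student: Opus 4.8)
The plan is to derive Theorem~\ref{th3.4} from the corresponding facts about the \emph{compression} $C_{\wt B}$ of an exit space extension $\wt B$, applied not to $A$ but to the transformed symmetric relation $A^t=(A-t)^{-1}$ with the triplet $\Pi_t=\{\cH,\G_{0t},\G_{1t}\}$ of \eqref{3.4} --- exactly as was done for Theorem~\ref{th3.2}. First I would assemble the dictionary furnished by Lemma~\ref{lem3.1.1}: $A^t$ is closed symmetric, $(A^t)^*=(A^*-t)^{-1}$, $\Pi_t$ is a boundary triplet for $(A^t)^*$ whose basic self-adjoint extension is $\ker\G_{0t}=(A_0-t)^{-1}=:A_0^t$, and the exit space extension $\wt A^t:=(\wt A-t)^{-1}$ of $A^t$ equals $\wt A^t_{\tau^t}$ with $\tau^t(\l)=-\tau(t+\tfrac 1 \l)$. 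Two further identities are needed. From \eqref{3.3.1} one has $(\SA(t)-t)^{-1}=P_\gH(\wt A-t)^{-1}\up\gH=C_{\wt A^t}$, the compression of $\wt A^t$; and a direct computation from \eqref{3.2} (using $\{h,h'\}\in\wt A^t\iff\{h',h+th'\}\in\wt A$) gives $T_{\wt A^t}=(T_{\wt A}-t)^{-1}$, so $T_{\wt A}$ is closed if and only if $T_{\wt A^t}$ is. Consequently the inclusion/equality relations between $\SA(t)$ and $A_0$ (or $A$) appearing in (i)--(iii) translate, term by term, into the relations between $C_{\wt A^t}$ and $A_0^t$ (or $A^t$).

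Next I would transport the analytic conditions from $\tau$ to $\tau^t$. The multivalued part of $\tau^t$ is again $\cK$ and its operator part is $\tau_0^t(\l)=-\tau_0(t+\tfrac 1 \l)$ (equation \eqref{3.11}); hence $\tau^t\in\RCH\iff\tau\in\RCH$ and $\tau^t\in R_c[\cH]\iff\tau\in R_c[\cH]$. Taking $\l=iy$ and letting $y\to0$ in $\tau_0^t$ corresponds to $\l\to\infty$ along the imaginary axis for $\tau_0$, so, as recorded in \eqref{3.11.1}, the operators of Proposition~\ref{pr2.4}(ii) at the point $t$ coincide with the ``at $\infty$'' operators of $\tau_0^t$ from Proposition~\ref{pr2.4}(i): $\cB_{\tau_0^t\infty}=\cB_{\tau_0}(t)$, $\dom D_{\tau_0^t\infty}=\dom D_{\tau_0}(t)$, $D_{\tau_0^t\infty}=-D_{\tau_0}(t)$. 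In particular $\ran\cB_{\tau_0^t\infty}$ is closed iff $\ran\cB_{\tau_0}(t)$ is, $\ker\cB_{\tau_0^t\infty}=\ker\cB_{\tau_0}(t)$, and the divergence condition $\lim_{y\to\infty}y\im(\tau_0^t(iy)h,h)=\infty$ for $h\ne0$ is precisely \eqref{3.16}.

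Now I would invoke \cite[Theorems 3.9 and 3.16]{Mog19} (together with the description of $\G_t C_{\wt A^t}$ from \cite[Theorem 3.8]{Mog19}) in the triplet $\Pi_t$: these say that $T_{\wt A^t}$ is closed with $C_{\wt A^t}\subset A_0^t$ iff $\tau^t\in\RCH$ and $\tau_0^t$ obeys the divergence condition at $\infty$; that $T_{\wt A^t}$ is closed with $\ov{C_{\wt A^t}}=A_0^t$ iff $\tau^t\in\RCH$ and $\ker\cB_{\tau_0^t\infty}=\{0\}$; that $T_{\wt A^t}$ is closed with $C_{\wt A^t}=A^t$ iff $\tau^t\in R_c[\cH]$, $\cB_{\tau^t\infty}=0$ and the divergence condition holds for $\tau^t$; and that, when $\ran\cB_{\tau_0^t\infty}$ is closed, $C_{\wt A^t}=\{\,\wh g\in(A^t)^*:\G_{0t}\wh g=0,\ \G_{1t}\wh g\in\ran\cB_{\tau_0^t\infty}\oplus\cK\,\}$ in the first case (and $=A_0^t$ in the second, $=A^t$ in the third). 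Feeding the dictionary of the previous two paragraphs into each of these yields (i)(a1)$\Leftrightarrow$(a2), (ii)(a1$^\prime$)$\Leftrightarrow$(a2$^\prime$) and (iii). For the explicit formula for $\SA(t)$ under the hypothesis that $\ran\cB_{\tau_0}(t)$ is closed, I would argue as in the proof of Theorem~\ref{th3.2}: the bijection $\{f,f'\}\mapsto\{f',f+tf'\}$ between $C_{\wt A^t}$ and $\SA(t)$ (a consequence of $(\SA(t)-t)^{-1}=C_{\wt A^t}$) satisfies $\G_0\{f',f+tf'\}=\G_{0t}\{f,f'\}$ and $\G_1\{f',f+tf'\}=-\G_{1t}\{f,f'\}$ by \eqref{3.4}, so the description of $C_{\wt A^t}$ via $\G_{0t},\G_{1t}$ turns literally into $\SA(t)=\{\wh f\in A^*:\G_0\wh f=0,\ \G_1\wh f\in\ran\cB_{\tau_0}(t)\oplus\cK\}$ (resp. $\SA(t)=A_0$, resp. $\SA(t)=A$).

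The only genuinely delicate point is the same one already dealt with in Theorem~\ref{th3.2}: verifying that the substitution $\l\mapsto t+\tfrac 1 \l$ exchanges the two families of boundary operators and preserves the qualifying hypotheses ($\RCH$- or $R_c[\cH]$-membership, closedness of $\ran\cB$, and the divergence \eqref{3.16}). Once the identities \eqref{3.11}, \eqref{3.11.1} are in place this is a routine check, which is exactly why the paper relegates the details to \cite{Mog19} and to the proof of Theorem~\ref{th3.2}. Everything else is formal manipulation with the triplet $\Pi_t$ and the compression identity \eqref{3.3.1}.
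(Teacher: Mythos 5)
Your proposal is correct and follows exactly the route the paper indicates (and omits): transferring to $A^t=(A-t)^{-1}$ with the triplet $\Pi_t$ of \eqref{3.4}, using \eqref{3.3.1}, \eqref{3.11}, \eqref{3.11.1} to identify $(\SA(t)-t)^{-1}$ with the compression $C_{\wt A^t}$ and the conditions at $t$ with the conditions at $\infty$ for $\tau^t$, and then invoking \cite[Theorems 3.8, 3.9 and 3.16]{Mog19}, just as in the proof of Theorem \ref{th3.2}. Your added details (the identity $T_{\wt A^t}=(T_{\wt A}-t)^{-1}$ and the translation of the boundary conditions via $\G_{0t},\G_{1t}$, where the sign of $\G_1$ is harmless since $\ran\cB_{\tau_0}(t)\oplus\cK$ is a subspace) are exactly the routine verifications the paper leaves out.
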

\begin{theorem}\label{th3.5}
Let the assumptions be the same as in Theorem \ref{th3.4}. Then:

{\rm (i)} If $\tau \in\RCH$, $\ran\cB_{\tau_0}(t)$ is closed and
\begin{gather}\label{3.17}
\lim_{y\to 0}\tfrac 1  y\im (\tau_0(t+iy)h,h)<\infty, \quad h\in \ker\cB_{\tau_0}(t),
\end{gather}
then $\SA(t)$ is self-adjoint (in \eqref{3.17} $\tau_0\in R_c[\cH_0]$ is the operator part of $\tau$).
\vskip 2mm
{\rm (ii)} The following statements {\rm(b1)} and  {\rm(b2)} are equivalent:

{\rm (b1)} $T_{\wt A}$ is closed, $\SA(t)$ is self-adjoint  and transversal with $A_0$;

{\rm (b2)} $\tau\in R_c[\cH]$ and
\begin{gather}\label{3.18}
\lim_{y\to 0} \tfrac 1 y\im (\tau(t+iy)h,h)<\infty, \quad h\in \cH.
\end{gather}
Moreover, if {\rm (b1)}  (equivalently {\rm (b2)}) is satisfied, then
\begin{gather}\label{3.18.1}
\SA(t)=A_{-K}=\{\wh f\in A^*: \G_1\wh f = - K \G_0\wh f \},
\end{gather}
with the operator $K=K^*\in\B(\cH)$ given by
\begin{gather}\label{3.19}
K=s \text{-} \lim_{y\to 0} \tau(t+iy).
\end{gather}
\end{theorem}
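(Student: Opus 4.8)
The plan is to prove both parts by the device used in Theorem~\ref{th3.2}: pass from $A,\Pi,\wt A=\wt A_\tau$ to the symmetric relation $A^t=(A-t)^{-1}$, the boundary triplet $\Pi_t=\{\cH,\G_{0t},\G_{1t}\}$ of \eqref{3.4} and the exit space extension $\wt A^t=(\wt A-t)^{-1}=\wt A^t_{\tau^t}$ (Lemma~\ref{lem3.1.1}), and then invoke the known descriptions of the compression $C_{\wt A^t}$ of $\wt A^t$ from \cite[Theorems 3.9 and 3.16]{Mog19}. The bridge back is the identity $(\SA(t)-t)^{-1}=C_{\wt A^t}$ from the proof of Theorem~\ref{th3.2}, i.e.\ $\SA(t)=(C_{\wt A^t})^{-1}+t$.

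First I would assemble the dictionary relating the data of $\tau$ at $t$ to the data of $\tau^t$ at $\infty$. Arguing as in the proof of Theorem~\ref{th3.2}: $\tau^t\in\RCH$ when $\tau\in\RCH$ and $\tau^t\in R_c[\cH]$ when $\tau\in R_c[\cH]$ (the transformation $\tau\mapsto\tau^t(\l)=-\tau(t+\tfrac1\l)$ being of the same type as its inverse); the multivalued part of $\tau^t$ is again $\cK$, its operator part is $\tau_0^t(\l)=-\tau_0(t+\tfrac1\l)$, and \eqref{3.11.1} gives $\dom D_{\tau_0^t\infty}=\dom D_{\tau_0}(t)$, $D_{\tau_0^t\infty}=-D_{\tau_0}(t)$, $\cB_{\tau_0^t\infty}=\cB_{\tau_0}(t)$. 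Hence $\ran\cB_{\tau_0^t\infty}$ is closed iff $\ran\cB_{\tau_0}(t)$ is, and by Proposition~\ref{pr2.4} condition \eqref{3.17} becomes $\ker\cB_{\tau_0^t\infty}\subset\dom D_{\tau_0^t\infty}$ while \eqref{3.18} becomes $\dom D_{\tau^t\infty}=\cH$. I would also note the elementary identities $\ker\G_{0t}=(A_0-t)^{-1}$ and $T_{\wt A^t}=(T_{\wt A}-t)^{-1}$ (the latter by a direct computation from \eqref{3.2}), so that $T_{\wt A^t}$ is closed iff $T_{\wt A}$ is. Finally, since $S\mapsto S-t$ and $S\mapsto S^{-1}$ are bijections of $\C(\gH)$ preserving self-adjointness, and, by the identities $(S\wh + S')^{-1}=S^{-1}\wh + S'^{-1}$, $(S-t)\wh +(S'-t)=(S\wh + S')-t$ and their analogues for intersections, also preserving transversality (recall $A^t=(A-t)^{-1}$, $(A^t)^*=(A^*-t)^{-1}$, $\ker\G_{0t}=(A_0-t)^{-1}$), the relation $C_{\wt A^t}=(\SA(t)-t)^{-1}$ is self-adjoint (resp.\ self-adjoint and transversal with $\ker\G_{0t}$) exactly when $\SA(t)$ is self-adjoint (resp.\ self-adjoint and transversal with $A_0$).

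Granting the dictionary, (i) is immediate: the hypotheses give $\tau^t\in\RCH$, $\ran\cB_{\tau_0^t\infty}$ closed and $\ker\cB_{\tau_0^t\infty}\subset\dom D_{\tau_0^t\infty}$, so \cite[Theorem 3.9]{Mog19} applied to $A^t,\Pi_t,\wt A^t_{\tau^t}$ gives that $C_{\wt A^t}$ is self-adjoint, whence $\SA(t)=(C_{\wt A^t})^{-1}+t$ is self-adjoint. For (ii), \cite[Theorem 3.16]{Mog19} applied to the same data gives: $T_{\wt A^t}$ is closed and $C_{\wt A^t}$ is self-adjoint and transversal with $\ker\G_{0t}$ if and only if $\tau^t\in R_c[\cH]$ and $\dom D_{\tau^t\infty}=\cH$, and in this case $C_{\wt A^t}=(A^t)_{-K'}$ with $K'=D_{\tau^t\infty}=s\text{-}\lim_{y\to\infty}\tau^t(iy)$. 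Rewriting both sides of this equivalence through the dictionary turns them into (b1) and (b2), proving (b1)$\,\iff\,$(b2). For the displayed formula: $\cK=\{0\}$ here, so $\tau_0=\tau$ and \eqref{3.11.1} gives $K'=-D_\tau(t)$, while $D_\tau(t)=s\text{-}\lim_{y\to0}\tau(t+iy)=:K$ by Proposition~\ref{pr2.4}(ii); applying Lemma~\ref{lem3.1.1}(ii) to the proper extension $\SA(t)\in\ex$ gives $\G_t C_{\wt A^t}=-\G\SA(t)$, so $C_{\wt A^t}=(A^t)_{-K'}$ yields $-\G\SA(t)=\G_t C_{\wt A^t}=-K'$, that is $\G\SA(t)=K'=-K$, i.e.\ $\SA(t)=A_{-K}$, which is \eqref{3.18.1}, \eqref{3.19}.

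I expect the only real work to be the verification of the dictionary in the second paragraph --- that closedness of $T_{\wt A}$, self-adjointness of $\SA(t)$, transversality with $A_0$, the class memberships and the boundary-value identities all transfer correctly under $A\rightsquigarrow(A-t)^{-1}$, $\tau\rightsquigarrow\tau^t$, $\SA(t)\rightsquigarrow C_{\wt A^t}$, $A_0\rightsquigarrow\ker\G_{0t}$. Each of these is a short computation in the algebra of linear relations, and most are already done in Lemma~\ref{lem3.1.1} and in the proof of Theorem~\ref{th3.2}; so there is no genuine obstacle, which is presumably why the authors omit the proof.
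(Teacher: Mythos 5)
Your proposal is correct and follows exactly the route the paper itself indicates for the omitted proof: transfer to $A^t=(A-t)^{-1}$ with the triplet $\Pi_t$ of \eqref{3.4}, use Lemma \ref{lem3.1.1} and the identity $(\SA(t)-t)^{-1}=C_{\wt A^t}$ together with the dictionary \eqref{3.11.1}, and then apply \cite[Theorems 3.9 and 3.16]{Mog19} to $C_{\wt A^t}$, translating self-adjointness, transversality with $A_0$ (which becomes $\ker\G_{0t}=(A_0-t)^{-1}$) and closedness of $T_{\wt A}$ back through the inversion. The sign bookkeeping for \eqref{3.18.1}--\eqref{3.19} via $D_{\tau^t\infty}=-D_{\tau}(t)$ also checks out, so nothing essential is missing.
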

\subsection{The case of finite deficiency indices}
\begin{theorem}\label{th3.6}
Assume that:

{\rm (i)} $A\in\C (\gH)$ is a symmetric relation with equal finite deficiency indices $n_+(A)=n_-(A)<\infty$,  $\Pi=\bt$ is a boundary triplet for $A^*$, $\tau\in\RH$ and  $\wt A=\wt A_\tau$ is  the corresponding exit space self-adjoint extension of $A$;

{\rm (ii)}  $\tau_0\in R[\cH_0]$ and $\cK$ are the operator and multivalued parts  of $\tau$ respectively (see \eqref{2.11}).

Then  the Shtraus family $\SA (t)$ of $\wt A$ is given in the triplet $\Pi$ by
\begin{gather}\label{3.20}
\SA (t)=A_{\eta(t)}=\{\wh f\in A^*:\{\G_0\wh f,\G_1 \wh f\}\in\eta(t)\},\quad t\in\bR,
\end{gather}
where $\eta=\eta_\tau:\bR\to \C (\cH)$ is the relation-valued  function \eqref{3.10}.
\end{theorem}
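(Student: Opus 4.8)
The plan is to obtain Theorem~\ref{th3.6} as a direct specialization of Theorem~\ref{th3.2} to the finite-dimensional situation, in which every auxiliary hypothesis appearing in the latter is automatically fulfilled. First I would record that, by Proposition~\ref{pr2.7.1}, the existence of a boundary triplet $\Pi=\bt$ for $A^*$ forces $\dim\cH=n_+(A)=n_-(A)<\infty$; consequently the decomposition $\cH=\cH_0\oplus\cK$ from \eqref{2.11} involves only finite-dimensional spaces, and $\cH\oplus\cH$ is finite-dimensional as well.

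Next I would verify, one at a time, the hypotheses of Theorem~\ref{th3.2}. Since $\dim\cH<\infty$ one has $\RCH=\RH$, hence $\tau\in\RCH$; likewise $R_c[\cH_0]=R[\cH_0]$, so the operator part $\tau_0$ of $\tau$ lies in $R_c[\cH_0]$ (and $\cK=\mul\tau(\l)$ is the multivalued part). Fixing $t\in\bR$, Proposition~\ref{pr2.4}(ii) supplies the operators $\cB_{\tau_0}(t)\in\B(\cH_0)$ with $\cB_{\tau_0}(t)\geq0$ and $D_{\tau_0}(t):\dom D_{\tau_0}(t)\to\cH_0$; moreover $\ran\cB_{\tau_0}(t)$, being a linear subspace of the finite-dimensional space $\cH_0$, is closed. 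Thus all assumptions of Theorem~\ref{th3.2} hold, and that theorem gives $\SA(t)=A_{\eta_\tau(t)}=\{\wh f\in A^*:\{\G_0\wh f,\G_1\wh f\}\in\eta_\tau(t)\}$ with $\eta_\tau(t)$ the relation defined by \eqref{3.10}. Since $t\in\bR$ was arbitrary, this is exactly \eqref{3.20}.

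It then remains to justify the assertion, implicit in the statement, that $\eta_\tau$ maps $\bR$ into $\C(\cH)$, i.e.\ that each $\eta_\tau(t)$ is a \emph{closed} linear relation in $\cH$. This is immediate: $\eta_\tau(t)$ is a linear manifold in the finite-dimensional space $\cH\oplus\cH$, hence closed. (Alternatively, \eqref{3.10} exhibits $\eta_\tau(t)$ as the image under a bounded linear map of the finite-dimensional space $\dom D_{\tau_0}(t)\oplus\cH_0\oplus\cK$.)

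I do not expect a genuine obstacle here: the entire argument is the bookkeeping showing that the restrictions ``$\tau\in\RCH$'' and ``$\ran\cB_{\tau_0}(t)$ closed'' present in Theorem~\ref{th3.2} become vacuous once $\dim\cH<\infty$. The only point that needs a moment's care is to confirm that the conclusion of Theorem~\ref{th3.2} holds \emph{without} further hypotheses on $\wt A$ (such as closedness of $T_{\wt A}$, which is not assumed in Theorem~\ref{th3.6}) as soon as $\ran\cB_{\tau_0}(t)$ is closed --- and an inspection of the statement of Theorem~\ref{th3.2} confirms that it does, the relation produced there being precisely \eqref{3.10}.
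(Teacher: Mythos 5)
Your proposal is correct and follows essentially the same route as the paper: Proposition~\ref{pr2.7.1} gives $\dim\cH=n_\pm(A)<\infty$, whence $\RH=\RCH$ and $\ran\cB_{\tau_0}(t)$ is automatically closed, so Theorem~\ref{th3.2} applies directly. Your extra remarks (closedness of $\eta_\tau(t)$ in finite dimensions, and that Theorem~\ref{th3.2} needs no hypothesis on $T_{\wt A}$) are harmless bookkeeping beyond the paper's two-line argument.
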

\begin{proof}
Since by Proposition \ref{pr2.7.1} $\dim \cH<\infty$, it follows that  $\wt R(\cH)=\wt R_c(\cH)$ and  $\ran\cB_{\tau_0}(t)$ is closed. Now the required statement is implied by Theorem \ref{th3.2}.
\end{proof}
\begin{remark}\label{rem3.6.1}
Theorems \ref{th3.4}, \ref{th3.5} and \ref{th3.6} readily yield the results obtained in \cite{DijLan20} for the Shtraus family of the extension $\wt A\supset A$ in the case $n_\pm(A)<\infty$.
\end{remark}

\begin{corollary}\label{cor3.7}
Let under the assumption {\rm (i)} of Theorem \ref{th3.6} $\tau\in R[\cH]$ and $\tau$ admits a holomorphic continuation at the point $t_0\in\bR$. Then $S_{\wt A}(t_0)=A_{-\tau(t_0)}$ (in the triplet $\Pi$)
\end{corollary}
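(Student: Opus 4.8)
The plan is to read the statement off from Theorem \ref{th3.6} by evaluating the relation \eqref{3.10} under the extra hypothesis that $\tau$ is holomorphic at $t_0$. Since $\tau\in R[\cH]$ is operator-valued, its multivalued part is $\cK=\{0\}$, so $\cH_0=\cH$ and the operator part of $\tau$ is $\tau$ itself. As $R[\cH]\subset\wt R(\cH)$ and assumption {\rm (i)} of Theorem \ref{th3.6} is in force, that theorem applies and yields $\SA(t_0)=A_{\eta_\tau(t_0)}$ with $\eta_\tau(t_0)$ given by \eqref{3.10}; thus everything reduces to identifying the operators $\cB_\tau(t_0)\in\B(\cH)$ and $D_\tau(t_0)$ furnished by Proposition \ref{pr2.4}, {\rm (ii)}.

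The one computation to carry out is that, on an interval of holomorphy, these ``boundary'' objects are trivial, respectively equal to the plain value of $\tau$. From $\tau^*(\l)=\tau(\ov\l)$ and continuity of $\tau$ at $t_0$ one gets $\tau(t_0)=\tau(t_0)^*$, and $\tau(t_0+iy)\to\tau(t_0)$ in norm as $y\to 0$; hence $\cB_\tau(t_0)=s\text{-}\lim_{y\to 0}(-iy\,\tau(t_0+iy))=0$, whose range is trivially closed. Writing the Taylor expansion $\tau(t_0+iy)=\tau(t_0)+iy\,\tau'(t_0)+o(y)$ with the self-adjoint operator $\tau'(t_0)$ and using $(\tau(t_0)h,h)\in\bR$, one obtains $\tfrac 1 y\im(\tau(t_0+iy)h,h)\to(\tau'(t_0)h,h)<\infty$ for every $h\in\cH$, so that $\dom D_\tau(t_0)=\cH$ and $D_\tau(t_0)h=\lim_{y\to 0}\tau(t_0+iy)h=\tau(t_0)h$.

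Substituting $\cK=\{0\}$, $\cH_0=\cH$, $\cB_\tau(t_0)=0$ and $D_\tau(t_0)=\tau(t_0)$ into \eqref{3.10} collapses $\eta_\tau(t_0)$ to $\{\{h,-\tau(t_0)h\}:h\in\cH\}={\rm gr}\,(-\tau(t_0))$, i.e. $\eta_\tau(t_0)=-\tau(t_0)$, and therefore $\SA(t_0)=A_{-\tau(t_0)}$ in the triplet $\Pi$, as asserted. The only delicate point is the last step of the previous paragraph --- verifying that the limits defining $\dom D_\tau(t_0)$ and $D_\tau(t_0)$ in Proposition \ref{pr2.4}, {\rm (ii)} really reduce to the ordinary value $\tau(t_0)$ --- but this is immediate from the local Taylor expansion of $\tau$ together with the self-adjointness of $\tau(t_0)$. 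Alternatively, the same conclusion follows from Theorem \ref{th3.5}, {\rm (ii)}: condition \eqref{3.18} holds at $t=t_0$ by holomorphy, $\tau\in R_c[\cH]$ since $\dim\cH<\infty$, and then \eqref{3.18.1}, \eqref{3.19} give $\SA(t_0)=A_{-K}$ with $K=\tau(t_0)$.
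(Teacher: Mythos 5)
Your proof is correct and follows essentially the same route as the paper: evaluate the relation \eqref{3.10} at $t_0$ by showing $\cB_\tau(t_0)=0$, $\dom D_\tau(t_0)=\cH$ and $D_\tau(t_0)=\tau(t_0)$ (using $\im\tau(t_0)=0$ and the real Taylor expansion), so that $\eta_\tau(t_0)=-\tau(t_0)$, and then apply Theorem \ref{th3.6}. Your closing alternative via Theorem \ref{th3.5}, (ii) is also valid but is not needed.
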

\begin{proof}
The equality $\im \tau(t_0)=0$ yields
\begin{gather*}
\lim_{y\to 0}\tfrac 1 y \im (\tau(t_0+iy)h,h) = {\rm Re}\, (\tau'(t_0)h,h)<\infty, \quad h\in\cH.
\end{gather*}
Hence $\dom D_\tau(t_0)=\cH$ and $D_\tau(t_0)=\tau(t_0)$. Moreover, $\cB_\tau(t_0)=0$ and by \eqref{3.10}
\begin{gather}\label{3.21}
\eta(t_0)=\eta_\tau(t_0)=-\tau(t_0).
\end{gather}
This and Theorem \ref{th3.6} yield the result.
\end{proof}

In the following proposition we characterize in terms of abstract boundary conditions the Shtraus family of the exit space extension $\wt A_\tau$ with the parameter $\tau\in \wt R(\cH)$ generated by an entire  Nevanlinna pair.
\begin{proposition}\label{pr3.8}
Let under the assumption {\rm (i)} of Theorem \ref{th3.6} $\tau=\tau_C\in \RM (\cH)$ be a relation-valued function defined by \eqref{2.20} with an entire Nevanlinna pair $C=(C_0,C_1)\in\EP$. Then the equality  (the abstract boundary condition)
\begin{gather}\label{3.27}
S_{\wt A_\tau}(t)=\{\wh f\in A^*:C_0(t)\G_0 \wh f-C_1(t)\G_1 \wh f=0\},\quad t\in\bR
\end{gather}
defines the Shtraus family $S_{\wt A_\tau}(t)$ of $\wt A_\tau$.
\end{proposition}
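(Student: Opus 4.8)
The plan is to reduce Proposition~\ref{pr3.8} to Theorem~\ref{th3.6}, i.e.\ to identify the relation $\eta_\tau(t)$ of \eqref{3.10} with the abstract boundary condition $C_0(t)\G_0\wh f - C_1(t)\G_1\wh f = 0$. First I would recall from Proposition~\ref{pr2.6.8} that $\tau=\tau_C\in\RM(\cH)$, so $\dim\cH<\infty$ forces $\wt R(\cH)=\wt R_c(\cH)$ and all the hypotheses of Theorem~\ref{th3.6} are met; hence $S_{\wt A_\tau}(t)=A_{\eta_\tau(t)}$ for every $t\in\bR$, with $\eta_\tau(t)$ given by \eqref{3.10} in terms of the operator part $\tau_0$ and multivalued part $\cK$ of $\tau$. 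So everything comes down to proving the identity
\begin{gather*}
\eta_\tau(t)=\{\{h,h'\}\in\cH^2 : C_0(t)h - C_1(t)h'=0\},\qquad t\in\bR.
\end{gather*}

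The key computational step is to extract $\cB_{\tau_0}(t)$, $D_{\tau_0}(t)$ and $\cK$ from the entire Nevanlinna pair near the real point $t$. I would invoke Lemma~\ref{lem2.6.7}: with the block decomposition $\cH=\cH_0\oplus\cK$ one has $\cK=\ker C_1(\l)$ (independent of $\l$ by entirety, and by continuity the same at $t\in\bR$), $C_1(\l)=(C_{10}(\l),0)$, $C_0(\l)=(C_{00}(\l),C_{01}(\l))$, and $\tau_0(\l)=-\wh C_1^{-1}(\l)C_{00}(\l)$ with $\wh C_1(\l)=(C_{10}(\l),-C_{01}(\l))$ invertible on $\bC\setminus Z$, $Z\in\cF$. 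Since $C$ is entire and $\ran C(t)=\cH$ for all $t\in\bR$, the pair $(C_0,C_1)$ is ``regular'' at $t$; I expect $t\notin Z$ when $t\notin F_{\tau_0}$, so $\tau_0$ is holomorphic at $t$, whence by the computation in the proof of Corollary~\ref{cor3.7} we get $\cB_{\tau_0}(t)=0$, $\dom D_{\tau_0}(t)=\cH_0$, $D_{\tau_0}(t)=\tau_0(t)$, and therefore $\eta_\tau(t)=\{\{h_0, -\tau_0(t)h_0 + \kappa\}: h_0\in\cH_0,\ \kappa\in\cK\}$. Plugging in $\tau_0(t)=-\wh C_1^{-1}(t)C_{00}(t)$ and unwinding the block structure (using $C_1(t)\kappa=0$ for $\kappa\in\cK$ and the identity $C_0(t)h_0 - C_1(t)h' = C_{00}(t)h_0 - C_{10}(t)h'_0$ after splitting $h'=h'_0\oplus h'_\cK$) should give exactly $\{\{h,h'\}:C_0(t)h-C_1(t)h'=0\}$ — this is essentially reversing the manipulation in the proof of Lemma~\ref{lem2.6.7}, using the $J_\cH$-unitary $X$ of \eqref{2.24}.

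The remaining case is $t\in F_{\tau_0}$, the (finitely-many-in-each-compact) poles of $\tau_0$. Here $\tau_0$ has a first-order pole with residue $-\xi_t$ (Assertion~\ref{ass2.6.2}), so from Proposition~\ref{pr2.4}(ii) one reads off $\cB_{\tau_0}(t)=\xi_t\ge 0$ and $\dom D_{\tau_0}(t)=\ker\xi_t$ inside $\cH_0$, with $D_{\tau_0}(t)$ the value at $t$ of the holomorphic part $\tau_t$. Then \eqref{3.10} gives $\eta_\tau(t)$ as a sum of three pieces; I would check, again by the block bookkeeping of Lemma~\ref{lem2.6.7} and continuity of the entire functions $C_{00},C_{10},C_{01}$ at $t$, that this sum coincides with $\ker\big(C_0(t),-C_1(t)\big)$. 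The main obstacle I anticipate is precisely this last verification at a pole: one must confirm that $\ker C_1(t)$ still equals $\cK$ (no ``extra'' kernel appears at the real point $t$ because $\ran C(t)=\cH$) and that the ranges $\ran\cB_{\tau_0}(t)\oplus\cK$ and $\ran C_{01}(t)$-type spaces match up, so that the three-parameter description in \eqref{3.10} is neither too big nor too small relative to the single linear equation $C_0(t)h=C_1(t)h'$. Dimension counting — $\dim\eta_\tau(t)=\dim\cH$ in $\cH^2$ for a self-adjoint-type relation, matching $\dim\ker(C_0(t),-C_1(t))=\dim\cH$ since $\ran(C_0(t),-C_1(t))=\cH$ — together with the inclusion $\eta_\tau(t)\subset\ker(C_0(t),-C_1(t))$ obtained from the explicit generators, should close the argument without further effort.
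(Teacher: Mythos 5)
Your plan is correct in substance, but it follows a genuinely different route from the paper. The paper never computes $\cB_{\tau_0}(t)$ and $D_{\tau_0}(t)$ at a pole: for a fixed real $t$ it observes that $\t:=\ker C(t)$ is self-adjoint (this is where $\ran C(t)=\cH$ at real points is used), takes the $J_\cH$-unitary $X$ of \eqref{2.24} built from the decomposition $\cH=\cH_0\oplus\mul\t$ \emph{at that point} $t$, and passes to the pair $\wt C=CX^{-1}$ and the transformed boundary triplet $\wt\Pi$ via Proposition \ref{pr2.10.1}; one then checks that $\ker\wt C_1(\l)=\{0\}$ near $t$, so $\tau_{\wt C}=X\tau_C$ is operator-valued and holomorphic at $t$ with $\tau_{\wt C}(t)=X\t$, Corollary \ref{cor3.7} applies in $\wt\Pi$, and transforming back gives \eqref{3.27}. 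You instead keep the fixed decomposition $\cH=\cH_0\oplus\cK$ of Lemma \ref{lem2.6.7}, read off $\cB_{\tau_0}(t)=\xi_t$ and $D_{\tau_0}(t)=\tau_t(t)\!\upharpoonright\!\ker\xi_t$ from the first-order pole structure (Assertion \ref{ass2.6.2}, Proposition \ref{pr2.4}(ii)), and identify $\eta_\tau(t)$ of \eqref{3.10} with $\ker\bigl(C_0(t),-C_1(t)\bigr)$ by an inclusion-plus-dimension count, the count $\dim\ker\bigl(C_0(t),-C_1(t)\bigr)=\dim\cH$ again resting on $\ran C(t)=\cH$. Both arguments are sound; the paper's gauge-transformation trick buys a uniform, computation-free treatment (no case analysis, no limits at poles), while your direct computation buys an explicit description of $\eta_\tau(t)$ at eigenvalues, essentially reproving Theorem \ref{th3.6}'s formula in concrete terms.

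One caveat so you do not get stuck when writing this up: the item you list as the "main obstacle" --- confirming that $\ker C_1(t)=\cK$ at the real point $t$ --- is not only unnecessary, it is false in general. For $\cH=\bC$, $C_0(\l)\equiv 1$, $C_1(\l)=\l$ one has $C\in\EP$, $\cK=\ker C_1(\l)=\{0\}$ for $\l\in\CR$, yet $\ker C_1(0)=\bC$; the identity $\eta_\tau(0)=\{0\}\oplus\bC=\ker\bigl(C_0(0),-C_1(0)\bigr)$ nevertheless holds. The inclusion $\eta_\tau(t)\subset\ker\bigl(C_0(t),-C_1(t)\bigr)$ should instead be closed by letting $\l\to t$ in the identities $C_0(\l)h+C_1(\l)\tau_0(\l)h=0$ ($h\in\cH_0$) and $C_1(\l)\kappa=0$ ($\kappa\in\cK$), valid on $\CR$ by \eqref{2.20}: multiplying the first by $(t-\l)$ gives $C_1(t)\xi_t=0$, and restricting it to $h\in\ker\xi_t$ gives $C_{00}(t)h+C_1(t)\tau_t(t)h=0$; together with continuity of the entire functions this verifies all three families of generators of \eqref{3.10}, after which your dimension count finishes the proof. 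The same limit argument also removes the need for your unverified expectation that $t\notin Z$ whenever $t\notin F_{\tau_0}$.
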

\begin{proof}
Assume that $t\in\bR$. Since $\im C_1(t)C_0^*(t)=0$ and $\ran (C_0(t), C_1(t))=\cH$, it follows from \cite[Proposition 6.46]{DM17} that the equality
\begin{gather*}
\t=\ker C(t)= \{\{h,h'\}\in\cH^2:C_0(t)h+ C_1(t) h'=0 \}
\end{gather*}
defines a relation $\t=\t^*\in\C (\cH)$. Hence by \eqref{2.7}
\begin{gather}\label{3.28}
\cH=\cH_0\oplus\cK, \qquad \t=\{\{h_0,\t_{\rm op}h_0\oplus k\}: h_0\in\cH_0, k\in\cK\},
\end{gather}
where $\cK=\mul\t$  and $\t_{\rm op}=\t_{\rm op}^*\in\B (\cH_0)$ is the operator part of $\t$. It was shown in the proof of Lemma \ref{lem2.6.7} that the equality  \eqref{2.24}  defines a $J_\cH$-unitary operator $X\in\B (\cH^2)$. Therefore by Lemma \ref{lem2.6.6}  the equality $\wt C(\l)=C(\l)X^{-1},\; \l\in\bC,$ defines a  pair $\wt C= (\wt C_0, \wt C_1)\in\EP$ such that $\tau_{\wt C}(\l)= X \tau_C(\l),\;\l\in\CR$. Next we put $\wt \t := X\t$. Then
\begin{gather}\label{3.29}
\wt\t=\ker\wt C(t)=\{\{h,h'\}\in\cH^2: \wt C_0(t)h+ \wt C_1(t) h'=0 \}
\end{gather}
and, consequently, $\mul \wt\t=\ker \wt C_1(t)$.  On the other hand, by \eqref{3.28} $\wt\t=\t_{\rm op}P_{\cH_0}\in\B (\cH)$. Thus $\ker \wt C_1(t)=\{0\}$ and, consequently, there is a neighbourhood $U(t)$ of $t$ in $\bC$ such that $\ker\wt C_1(\l)=\{0\}, \; \l\in U(t)$. This and formula \eqref{2.20} (for $\wt C$) imply that $\mul \tau_{\wt C}(\l)=\{0\},\; \l\in U(t)\setminus \bR,$ and, consequently, the multivalued part $\wt\cK$ of $\tau_{\wt C}\in \wt R(\cH)$ is $\wt\cK  =\mul \tau_{\wt C}(\l)=\{0\},\; \l\in\CR$ (this means that  $\tau_{\wt C}\in \wt R [\cH]$). Therefore  $\ker  \wt C_1(\l)=\{0\}$ and  $\tau_{\wt C}(\l)=-\wt C_1^{-1}(\l)\wt C_0(\l),\; \l\in\CR$. Moreover, $\tau_{\wt C} $ admits a holomorphic continuation at the point $t$ and $\tau_{\wt C}(t)=-\wt C_1^{-1}(t)\wt C_0(t)=\wt \t\in \B(\cH)$.

Let $\wt J={\rm diag}\, (I_{\cH}, -I_{\cH})\in\B (\cH^2)$. Then $\wt X:=\wt J X \wt J\in B (\cH^2)$ is a $J$-unitary operator  and by Proposition \ref{pr2.10.1}, (i) the equality $(\wt \G_0,\wt \G_1)^\top= \wt X (\G_0, \G_1)^\top $ defines a boundary triplet $\wt\Pi=\{\cH, \wt\G_0,\wt \G_1\}$ for $A^*$. Moreover, $\wt X (-\tau_C(\l))=-\tau_{\wt C}(\l),\; \l\in\CR,$ and by Proposition \ref{pr2.10.1}, (iii) $\wt A=\wt A_{\tau_{\wt C}}$ (in the triplet $\wt \Pi$). Therefore by Corollary \ref{cor3.7} $S_{\wt A}(t)=A_{-\tau_{\wt C}(t)}=A_{-\wt\t}$ (in the triplet $\wt\Pi$). Finally, the equality $-\wt \t=\wt X (-\t)$ and Proposition \ref{pr2.10.1}, (ii) yield $S_{\wt A}(t)=A_{-\t}$ (in the triplet $\Pi$), which is equivalent to \eqref{3.27}.
\end{proof}

\subsection{Abstract eigenvector expansion}
Assume that $A\in\C (\gH)$ is a symmetric relation with finite deficiency indices $n_+(A)=n_-(A)=:d$ and $\Pi=\bt$ is a boundary triplet for $A^*$ (hence by Proposition \ref{pr2.7.1} $\dim \cH=d<\infty$).  Moreover, let $\tau\in \RH$ and let $\eta(t)=\eta_\tau(t)$ be a $\C (\cH)$-valued function given for any $t\in\bR$ by \eqref{3.10}. We consider the abstract eigenvalue problem
\begin{gather}
\{f,tf\}\in A^*\label{4.1}\\
\{\G_0\{f,tf\},\G_1\{f,tf\}\}\in \eta_\tau(t)\label{4.2}
\end{gather}
with the abstract boundary condition \eqref{4.2} depending on the parameter $t\in\bR$. The set of all solutions $f\in\dom A^*$  of the problem \eqref{4.1}, \eqref{4.2} will be denoted by $\nt$.  Clearly, $\nt$ is a linear manifold in $\gN_t(A^*)$.
\begin{definition}\label{def4.2}
A  point $t\in\bR$ is called an eigenvalue of the problem \eqref{4.1}, \eqref{4.2} if $\nt\neq\emptyset$. The set of all such eigenvalues is denoted by $\wt {EV}$. An element $f\in \nt$ for $t\in  \wt {EV}$ is called an eigenvector of the problem  \eqref{4.1}, \eqref{4.2} corresponding to $t$.
\end{definition}
\begin{remark}\label{rem4.3}
In the case of a densely defined operator $A$ the operators $\G_0$ and $\G_1$ of the triplet $\Pi=\bt$ are defined on $\dom A^*$ (see Remark \ref{rem2.7.0}) and the eigenvalue problem \eqref{4.1}, \eqref{4.2} takes the form
\begin{gather}
 A^*f=tf\label{4.3}\\
\{\G_0 f,\G_1 f \}\in \eta_\tau(t)\label{4.4}.
\end{gather}
Assume now that $C=(C_0,C_1)$ is an  entire Nevanlinna pair in $\cH$ and $\tau=\tau_C\in\wt R(\cH)$ is given by \eqref{2.20}. Then by \eqref{3.20} and \eqref{3.27} $\eta_\tau(t)=\{\{h,h'\}\in\cH^2:C_0(t)h- C_1(t)h'=0 \}$ and the boundary condition \eqref{4.4} can be written as
\begin{gather}\label{4.4.1}
C_0(t)\G_0 f -C_1(t)\G_1 f=0.
\end{gather}
\end{remark}
\begin{proposition}\label{pr4.4}
Let under the assumptions of Theorem \ref{th3.6} $\wt A$ be a linear relation in a Hilbert space $\wt\gH\supset \gH$. Moreover, let $\eta_\tau(t)$ be given by \eqref{3.10}, let $\nt\; (t\in\bR)$ be the set of all solutions of the problem \eqref{4.1}, \eqref{4.2} and let $\wt {EV}$ be the set of all eigenfunctions od the same problem. Then
\begin{gather}\label{4.5}
P_\gH \gN_t(\wt A)=\nt, \;\;t\in\bR,\quad {\rm and} \quad \s_p(\wt A)=\wt {EV}.
\end{gather}
\end{proposition}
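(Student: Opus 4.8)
The plan is to unwind the definitions of $\nt$, of the Shtraus family $\SA(t)$, and of the abstract eigenvalue problem, and to observe that they all describe the same object. First I would fix $t\in\bR$ and recall from Theorem \ref{th3.6} that in the triplet $\Pi$ one has $\SA(t)=A_{\eta(t)}=\{\wh f\in A^*:\{\G_0\wh f,\G_1\wh f\}\in\eta_\tau(t)\}$, where $\eta_\tau(t)$ is the relation-valued function \eqref{3.10}. By the definition \eqref{3.3} of the Shtraus family and the reformulation \eqref{3.3.1}, the relation $\SA(t)$ satisfies $(\SA(t)-t)^{-1}=P_\gH(\wt A-t)^{-1}\up\gH$, hence $t\in\s_p(\SA(t))$ if and only if $\gN_t(\SA(t))\neq\{0\}$. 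So the first key step is to identify $\gN_t(\SA(t))$ with $\nt$: by the two descriptions of $\SA(t)$ just recalled, $f\in\gN_t(\SA(t))$ means $\{f,tf\}\in\SA(t)=A_{\eta_\tau(t)}$, i.e. $\{f,tf\}\in A^*$ and $\{\G_0\{f,tf\},\G_1\{f,tf\}\}\in\eta_\tau(t)$, which is exactly the pair of conditions \eqref{4.1}, \eqref{4.2} defining $\nt$. Thus $\gN_t(\SA(t))=\nt$.

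Next I would establish the first equality in \eqref{4.5}, namely $P_\gH\gN_t(\wt A)=\nt$. Using the definition \eqref{3.3} of $\SA(t)$ directly: $f\in\gN_t(\SA(t))$ iff $\{f,tf\}\in\SA(t)$ iff there is $f_r\in\gH_r$ with $\{f\oplus f_r,\, tf\oplus t f_r\}\in\wt A$, i.e. iff $f\oplus f_r\in\gN_t(\wt A)$ for some $f_r\in\gH_r$, i.e. iff $f\in P_\gH\gN_t(\wt A)$. Combining this with the identification $\gN_t(\SA(t))=\nt$ from the previous step gives $P_\gH\gN_t(\wt A)=\nt$ for every $t\in\bR$.

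Finally, for the spectral statement $\s_p(\wt A)=\wt{EV}$, I would argue as follows. Since $\wt A=\wt A^*$ acts in $\wt\gH$ and is an exit space extension of $A$, its point spectrum is real, so it suffices to consider $t\in\bR$. By Definition \ref{def4.2}, $t\in\wt{EV}$ iff $\nt\neq\{0\}$ (the empty-set phrasing there should be read as $\nt\neq\{0\}$, since $0\in\nt$ always). By the already-proved equality $\nt=P_\gH\gN_t(\wt A)$, this holds iff $P_\gH\gN_t(\wt A)\neq\{0\}$. To pass from $P_\gH\gN_t(\wt A)\neq\{0\}$ to $\gN_t(\wt A)\neq\{0\}$ one direction is trivial, and for the converse I would use that no nontrivial subspace of $\gH_r=\wt\gH\ominus\gH$ reduces $\wt A$ (minimality of the exit space extension): if some $0\neq g\in\gN_t(\wt A)$ had $P_\gH g=0$, then $g\in\gH_r$ and $\mathrm{span}\{g\}$ would be a nontrivial subspace of $\gH_r$ reducing $\wt A$, a contradiction. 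Hence $\gN_t(\wt A)\neq\{0\}$ iff $P_\gH\gN_t(\wt A)\neq\{0\}$, so $t\in\s_p(\wt A)$ iff $t\in\wt{EV}$, which is $\s_p(\wt A)=\wt{EV}$.

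The routine parts are the definition-chasing in the first two paragraphs; the one genuine point requiring care is the last implication, where one must invoke the minimality (irreducibility) property built into the notion of exit space extension to rule out eigenvectors of $\wt A$ living entirely in $\gH_r$. I expect this to be the only real obstacle, and it is a standard argument.
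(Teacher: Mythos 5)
Your proposal is correct and follows essentially the same route as the paper: definition-chasing via the Shtraus family and Theorem \ref{th3.6} gives $P_\gH\gN_t(\wt A)=\gN_t(S_{\wt A}(t))=\wt\gN_t$, and the equality $\s_p(\wt A)=\wt{EV}$ comes from the minimality clause in the definition of an exit space extension, which forbids eigenvectors of $\wt A$ lying entirely in $\gH_r$. The only cosmetic difference is that the paper rules out such eigenvectors by showing the closed subspace $\gN_t(\wt A)\cap\gH_r$ reduces $\wt A$ (arguing through the operator part via \eqref{2.7}), whereas you use the span of a single eigenvector; both rest on the same standard reducing-subspace fact for selfadjoint relations.
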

\begin{proof}
Let $\gH_r:=\wt\gH\ominus\gH$ and let $t\in\bR$. Then $f\in P_\gH\gN_t(\wt A)$ if and only if $f\oplus f_r \in \gN_t(\wt A)$ or, equivalently, $\{f\oplus f_r, tf\oplus tf_r\}\in \wt A$ with some $f_r\in\gH_r$. Therefore by \eqref{3.3} $P_{\gH}\gN_t(\wt A)=\{f\in\gH:\{f,tf\}\in \SA (t)\}=\gN_t(\SA (t))$. Moreover, by Theorem \ref{th3.6} $\SA(t)=A_{\eta_\tau (t)}$ and Remark \ref{rem2.11}, (i) implies that $\gN_t(\SA (t)) =\nt$. This yields the first equality in \eqref{4.5}.

Next, in view of \eqref{2.7} one has $\wt\gH=\wt\gH_0\oplus \mul\wt A$ and $\wt A={\rm gr}\, \wt A_0\oplus \wh \mul \wt A$,  where $\wt A_0=\wt A_0^*$ is an operator in $\wt\gH_0$. Let $t\in\bR$.  Since $\gN_t(\wt A)=\gN_t(\wt A_0)$, it follows that $\gN_t(\wt A)\cap\gH_r \subset \gN_t(\wt A_0)$. Therefore  the subspace $\gN_t(\wt A)\cap\gH_r$ reduces $\wt A_0$ and, consequently, $\wt A$. Hence $\gN_t(\wt A)\cap\gH_r=\{0\}$ and therefore $\ker (P_\gH\up \gN_t(\wt A))=\{0\}$. Thus by the first equality in \eqref{4.5} $\nt=\{0\}\Leftrightarrow\gN_t(\wt A)=\{0\}$, which yields the second equality in \eqref{4.5}.
\end{proof}
\begin{remark}\label{rem4.4.1}
Clearly in the case $\tau(\l)\equiv \t(=\t^*)$ (i.e., in  the case of the canonical extension $\wt A=A_{-\t}$) Proposition \ref{pr4.4} turns into the statements of Remark \ref{rem2.11}, (i).
\end{remark}
\begin{lemma}\label{lem4.5}
Assume that $T,\wt T\in\C (\gH)$ and $T\subset \wt T$. Then:

{\rm (i)} If $\dim\ker \wt T<\infty$ and $\ran \wt T$ is closed, then the same statements hold for $T$;

{\rm (ii)} If $\dim\ker  T<\infty$,  $\ran  T$ is closed and $\dim \wt T/T<\infty$, then  $\dim\ker \wt T<\infty$ and $\ran \wt T$ is closed.
\end{lemma}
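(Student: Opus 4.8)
The plan is to reduce both parts to a statement about a bounded operator and its restriction to a closed subspace. Recall that $T\in\C(\gH)$ is by definition a closed, hence Hilbert, subspace of $\gH\oplus\gH$, so the coordinate projection $\pi:\gH\oplus\gH\to\gH$, $\pi\{f,g\}=g$, restricts to a bounded operator $\pi\up T:T\to\gH$ with $\ran(\pi\up T)=\ran T$ (so that closedness of the one range in $\gH$ amounts to closedness of the other) and $\ker(\pi\up T)=\{\{f,0\}:f\in\ker T\}$, which is isometrically isomorphic to $\ker T$; the same holds for $\wt T$. Hence everything reduces to the behaviour of $\pi$ on the pair $T\subset\wt T$.

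The key step, which I would isolate as an auxiliary fact, is this: if $S:X\to Y$ is a bounded operator between Hilbert spaces with $\dim\ker S<\infty$ and $\ran S$ closed, and $X_0\subseteq X$ is a closed subspace, then $S\up X_0$ again has finite-dimensional kernel and closed range. The kernel statement is immediate since $\ker(S\up X_0)\subseteq\ker S$. For the range I would factor $S=\wh S\circ q$ through the quotient $q:X\to X/\ker S$; since $\ran S$ is closed, $\wh S$ is a continuous bijection of the Banach space $X/\ker S$ onto the Banach space $\ran S$, hence a topological isomorphism by the open mapping theorem. As $\ker S$ is finite-dimensional, the sum $X_0+\ker S\;(=q^{-1}(q(X_0)))$ is closed, so $q(X_0)$ is closed in $X/\ker S$, and therefore $S(X_0)=\wh S(q(X_0))$ is closed in $\ran S$, hence in $Y$. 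I expect this passage through the quotient — specifically, using finite-dimensionality of $\ker S$ to keep $q(X_0)$ closed — to be the only genuine point; everything else is bookkeeping, together with the standard fact that the sum of a closed subspace and a finite-dimensional subspace is closed.

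Granting this, part (i) follows by applying the auxiliary fact with $X=\wt T$, $X_0=T$, $S=\pi\up\wt T$: from $\ker T\subseteq\ker\wt T$ one gets $\dim\ker T<\infty$, and the fact yields that $\ran T=\ran(\pi\up T)$ is closed. For part (ii) I would argue directly, without the auxiliary fact. The linear map $\ker\wt T\to\wt T/T$ sending $f$ to the coset of $\{f,0\}$ is well defined and has kernel exactly $\ker T$, whence $\dim\ker\wt T\le\dim\ker T+\dim(\wt T/T)<\infty$. Next, since $\dim(\wt T/T)<\infty$ there is a finite-dimensional subspace $W\subseteq\wt T$ with $\wt T=T+W$; then $\ran\wt T=\pi(\wt T)=\ran T+\pi(W)$ is the sum of the closed subspace $\ran T$ and the finite-dimensional subspace $\pi(W)$, hence closed. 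This completes the plan.
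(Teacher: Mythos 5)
Your proposal is correct, and for part (i) it takes a genuinely different route from the paper's. You regard $\wt T$ as a Hilbert space in its own right and reduce everything to the bounded second-coordinate projection $\pi\up \wt T:\wt T\to\gH$, whose kernel is $\wh\gN_0(\wt T)\cong\ker \wt T$ and whose range is $\ran \wt T$; the heart of your argument is the general semi-Fredholm--type fact that a bounded operator with finite-dimensional kernel and closed range maps closed subspaces onto closed subspaces, which you prove correctly via the induced injection on the quotient $X/\ker S$, the open mapping theorem, and the standard fact that a closed subspace plus a finite-dimensional subspace is closed. The paper instead argues inside $\gH\oplus\gH$ with relation-theoretic orthogonal decompositions: it writes $T=T_1\oplus\wh\gN_0(T)$ and $\wt T=\wt T_1\oplus\wh\gN_0(T)$, constructs an intermediate relation $\wt T_2$ with $T_1\subset\wt T_2\subset\wt T_1$, $\ker \wt T_2=\{0\}$ and $\ran \wt T_2=\ran \wt T$ closed (using the splitting $\wt T_1=T_0\oplus(T_1\dotplus\wh\gN_0(\wt T_1))$), so that $\wt T_2^{-1}$ is bounded on the closed set $\ran \wt T_2$; closedness of the relation $T_1^{-1}\subset\wt T_2^{-1}$ then forces $\ran T_1=\ran T$ to be closed. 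Your route is more modular and standard: the auxiliary lemma applies to an arbitrary closed subspace $X_0$, not just to a sub-relation, and it isolates the one genuine analytic point, while the paper's argument stays entirely within elementary Hilbert-space decompositions and the closed graph theorem, never passing to quotient spaces. For part (ii) the two proofs are essentially the same: your map $\ker \wt T\to \wt T/T$ with kernel $\ker T$, and the decomposition $\wt T=T+W$ with $\dim W<\infty$ giving $\ran \wt T=\ran T+\pi(W)$, are explicit versions of the paper's inequalities $\dim (\wh\gN_0(\wt T)/ \wh\gN_0(T))\leq \dim (\wt T/ T)$ and $\dim (\ran \wt T/\ran  T)\leq \dim (\wt T/ T)$.
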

\begin{proof}
(i) Assume that $\dim\ker \wt T<\infty$ and $\ran \wt T$ is closed. Since $\ker T\subset \ker \wt T$, it follows that $\dim\ker T<\infty$. Next, $T$ and $\wt T$ admit the representations
\begin{gather}\label{4.7}
T=T_1\oplus \wh \gN_0(T), \qquad \wt T=\wt T_1\oplus \wh \gN_0(T),
\end{gather}
where $T_1, \wt T_1\in\C (\gH), \; T_1\subset \wt T_1$ and $\ker T_1=\{0\}$. Since $\dim \wh\gN_0(\wt T_1)<\infty$, it follows that $T_1\dotplus \wh\gN_0(\wt T_1)$ is a   closed subspace in $\wt T_1$ and hence $\wt T_1$ admits the representation
\begin{gather}\label{4.8}
\wt T_1=T_0\oplus(T_1\dotplus \wh\gN_0(\wt T_1))
\end{gather}
with some $T_0\in\C(\gH)$. Let $\wt T_2:=T_0\oplus T_1$. Then $\wt T_2 \in \C (\gH), \; T_1\subset \wt T_2\subset \wt T_1$ and by \eqref{4.8} $\wt T_2\cap\wh \gN_0(\wt T_1)=\{0\}$, which implies that $\ker \wt T_2=\{0\}$. Moreover, by \eqref{4.8} $\ran \wt T_2=\ran \wt T_1$ and the second equality in \eqref{4.7} yields $\ran \wt T_1=\ran \wt T$. Hence $\ran \wt T_2$ is closed and consequently $\wt T_2^{-1}$ is a bounded operator from $\ran \wt T_2$ into $\gH$. Since $T_1^{-1}\subset \wt T_2^{-1}$ and $T_1^{-1}\in\C (\gH)$, it follows that $\ran T_1(=\dom T_1^{-1})$ is closed. Moreover, by the first equality in \eqref{4.7} $\ran T=\ran T_1$, which implies that $\ran T$ is closed.

(ii) Assume that $\dim\ker  T<\infty$,  $\ran  T$ is closed and $\dim \wt T/T<\infty$. It is clear that $\dim (\ran \wt T/\ran  T)\leq \dim (\wt T/ T)<\infty$ and, consequently, $\ran\wt T$ is closed. Moreover, $\wh\gN_0(T)=\wh\gN_0(\wt T)\cap T$ and hence $\dim (\wh\gN_0(\wt T)/ \wh\gN_0(T))\leq \dim (\wt T/ T)<\infty$. Since $\dim \wh\gN_0(T)=\dim\ker T<\infty$ and $\dim \wh\gN_0(\wt T)=\dim\ker \wt T$, it follows that $\dim \ker \wt T<\infty$.
\end{proof}
\begin{definition}\label{def4.6}
A symmetric relation  $A\in\C (\gH)$ has a discrete spectrum if $\dim \gN_t(A)<\infty $ and $\ran (A-t)$ is closed for any $t\in\bR$.
\end{definition}
Clearly, $A$ has a discrete spectrum if and only if so is the operator part $A_{\rm op}$ of $A$.

In the following we denote by ${\rm Sym}_d(\gH)$ (${\rm Self}_d(\gH)$) the set of all symmetric (resp. self-adjoint) linear relations $A\in \C(\gH)$ with the discrete spectrum.

If $A\in {\rm Sym}_d(\gH)$, then $n_+(A)=n_-(A)$ and
\begin{gather*}
\bR\setminus \wh \rho(A)=\s_p(A)\in \cF, \qquad \dim \gN_t(A)<\infty, \;\; t\in \s_p(A)
\end{gather*}
In the case $A=A^*\in \C(\gH)$ the inclusion $A\in {\rm Self}_d(\gH)$ is equivalent to conditions
\begin{gather*}
\s(A)=\s_p(A)\in \cF, \qquad \dim \gN_t(A)<\infty, \;\; t\in \s_p(A).
\end{gather*}
\begin{proposition}\label{pr4.7}
Let $A\in \C (\gH)$ be a symmetric relation. Then:

{\rm(i)} if there exists an exit space (in particular,canonical) extension $\wt A\in {\rm Self}_d(\wt\gH)$ of $A$, then $A\in {\rm Sym}_d(\gH)$;

{\rm(ii)} if $A\in {\rm Sym}_d(\gH)$ and  $n_\pm(A)<\infty$, then each symmetric extension $\wt A\in\cex$ belongs to ${\rm Sym}_d(\gH)$.
\end{proposition}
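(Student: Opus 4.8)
The plan is to deduce both parts directly from Lemma \ref{lem4.5}, since the defining conditions of ${\rm Sym}_d(\gH)$ and ${\rm Self}_d(\wt\gH)$ (Definition \ref{def4.6}) are exactly ``for every $t\in\bR$, $\dim\ker(B-t)<\infty$ and $\ran(B-t)$ is closed'', and Lemma \ref{lem4.5} is tailor-made to transfer these two properties across an inclusion of closed relations: part (i) of that lemma pushes them downward, part (ii) pushes them upward when the quotient is finite-dimensional. So in each part I would fix $t\in\bR$, set up the appropriate pair of closed relations $T\subset\wt T$, verify the hypotheses of the relevant half of Lemma \ref{lem4.5}, and read off the conclusion; since $t$ is arbitrary, this gives membership in ${\rm Sym}_d(\gH)$.

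For (i) I would regard $A$ as a closed linear relation in the larger space $\wt\gH$ (legitimate, since $\gH\oplus\gH$ is a closed subspace of $\wt\gH\oplus\wt\gH$), so that for fixed $t$ both $A-t$ and $\wt A-t$ lie in $\C(\wt\gH)$ with $A-t\subset\wt A-t$. From $\wt A\in{\rm Self}_d(\wt\gH)$ I get $\dim\ker(\wt A-t)=\dim\gN_t(\wt A)<\infty$ and $\ran(\wt A-t)$ closed, so Lemma \ref{lem4.5}, (i) (with $\wt\gH$ playing the role of $\gH$) yields $\dim\gN_t(A)<\infty$ and $\ran(A-t)$ closed in $\wt\gH$. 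The one small point to record is that $\ran(A-t)\subset\gH$ and $\gH$ is closed in $\wt\gH$, so closedness in $\wt\gH$ is the same as closedness in $\gH$; hence $A\in{\rm Sym}_d(\gH)$.

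For (ii), with $\wt A\in\cex$ symmetric, i.e.\ $A\subset\wt A\subset A^*$ and $n_\pm(A)<\infty$, I would first observe $\dim(\wt A/A)\le\dim(A^*/A)=n_+(A)+n_-(A)<\infty$. For fixed $t$ the shift $\{f,f'\}\mapsto\{f,f'-tf\}$ is a linear bijection of $A^*$ carrying $\wt A$ onto $\wt A-t$ and $A$ onto $A-t$, so $\dim\big((\wt A-t)/(A-t)\big)=\dim(\wt A/A)<\infty$. Since $A\in{\rm Sym}_d(\gH)$ gives $\dim\ker(A-t)<\infty$ and $\ran(A-t)$ closed, Lemma \ref{lem4.5}, (ii) supplies $\dim\gN_t(\wt A)<\infty$ and $\ran(\wt A-t)$ closed; as $\wt A$ is symmetric and closed and $t\in\bR$ is arbitrary, $\wt A\in{\rm Sym}_d(\gH)$.

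I do not anticipate a genuine obstacle here: the statement is essentially a repackaging of Lemma \ref{lem4.5}. The only places that demand a moment's attention are the change of ambient Hilbert space in part (i), together with the observation that closedness of a subset of $\gH$ is unaffected by passing to $\wt\gH$, and, in part (ii), checking that the finite-dimensional quotient $\wt A/A$ is preserved under translation by $t$ so that Lemma \ref{lem4.5}, (ii) applies with $\wt T/T$ finite-dimensional.
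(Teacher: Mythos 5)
Your argument is correct and is essentially the paper's own proof: the paper likewise applies Lemma \ref{lem4.5} to the pair $A-t$, $\wt A-t$ (part (i) of the lemma for the exit space extension, part (ii) together with $\dim(\wt A/A)\le 2n_+(A)<\infty$ for the canonical symmetric extensions). The details you add — viewing $A$ as a closed relation in $\wt\gH$, the invariance of closedness under the ambient-space change, and the invariance of the quotient dimension under the shift by $t$ — are exactly the routine verifications the paper leaves implicit.
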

\begin{proof}
If $n_+(A)=n_-(A)<\infty$, then for each $\wt A\in\cex $ and $t\in\bR$ one has $\dim (\wt A-t)/(A-t)=\dim \wt A/A\leq 2 n_+(A)<\infty$.  Now application of Lemma \ref{lem4.5} to relations $A-t$ and $\wt A-t$ yields the result.
\end{proof}
\begin{remark}\label{rem4.8}
For densely defined operators $A$ with finite deficiency indices and canonical extensions $\wt A=\wt A^*$ if $A$ statements of Proposition \ref{pr4.7} are well known (see e.g.\cite[\S14.9]{Nai}).
\end{remark}
In the following theorem we describe in terms of the parameter $\tau\in\RH$ exit space extensions $\wt A_\tau=\wt A_\tau^*$ of $A$ with the discrete spectrum.
\begin{theorem}\label{th4.9}
Let under the assumptions of Theorem \ref{th3.6} $A\in {\rm Sym}_d(\gH)$ and let $\wt A=\wt A_\tau$ be a linear relation in a Hilbert space $\wt\gH\supset\gH$. Then $\wt A\in {\rm Self}_d (\wt\gH)$  if and only if $\tau\in\RM (\cH)$.
\end{theorem}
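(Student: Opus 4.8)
The plan is to recast the condition $\wt A\in{\rm Self}_d(\wt\gH)$ as compactness of the resolvent $(\wt A-i)^{-1}$, and then to isolate the ``exit--space part'' of $\wt A=\wt A_\tau$ by means of the standard coupling/realization of the Nevanlinna parameter $\tau$. \emph{Preliminary reductions.} Under the hypotheses of Theorem~\ref{th3.6} together with $A\in{\rm Sym}_d(\gH)$ one has $\dim\gN_t(\wt A)<\infty$ for every $t\in\bR$: by Proposition~\ref{pr4.7}(ii), $A_0=\ker\G_0\in{\rm Self}_d(\gH)$, so $\dim\gN_t(A_0)<\infty$; since $\dim(A^*/A_0)=n_+(A)<\infty$ this forces $\dim\gN_t(A^*)<\infty$; and $P_\gH$ maps $\gN_t(\wt A)$ injectively into $\gN_t(A^*)$ by the proof of Proposition~\ref{pr4.4}. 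Hence $\wt A\in{\rm Self}_d(\wt\gH)$ is equivalent to $\s(\wt A)\in\cF$ and, $\wt A$ being self-adjoint, to compactness of $(\wt A-i)^{-1}\in\B(\wt\gH)$ (equivalently of $(\wt A_{\rm op}-i)^{-1}$). Next, using the constant $J_\cH$-unitary operator $X$ of \eqref{2.24} built from $\cK=\mul\tau(\l)$, Proposition~\ref{pr2.10.1} lets me pass to the boundary triplet $\wt\Pi$ in which $\wt A=\wt A_{\wt\tau}$ with $\wt\tau(\l)=\tau_0(\l)P_{\cH_0}\in\B(\cH)$ (a short computation with \eqref{2.24}; here $\tau_0$ is the operator part of $\tau$, see \eqref{2.11}). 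Since $\wt A$ is the same relation and $\wt\tau\in\RM(\cH)\iff\tau_0\in\Rm[\cH_0]\iff\tau\in\RM(\cH)$, I may assume henceforth that $\tau\in R[\cH]$ is operator-valued, with integral representation \eqref{2.9} and spectral function $\xi$.

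\emph{Coupling and the essential part.} By the realization theory of Nevanlinna functions (Krein--Langer, Langer--Textorius; see \cite{KreLan71,LanTex77,BHS,DM17}) there are a Hilbert space $\gH_\tau$, a simple symmetric relation $S\in\C (\gH_\tau)$ with $n_\pm(S)=\dim\cH<\infty$, and a boundary triplet for $S^*$ with Weyl function $\tau$, such that $\wt A=\wt A_\tau$ is unitarily equivalent to a canonical self-adjoint extension of $A\oplus S$ in $\gH\oplus\gH_\tau$ (with $\gH_\tau$ identified with $\wt\gH\ominus\gH$). Let $B$ be the ``free'' canonical self-adjoint extension of $S$ in this model; it has the explicit form $B=\Lambda_\xi\oplus\Theta_\cB$, where $\Lambda_\xi$ is multiplication by the independent variable in $L^2(\xi)$ and $\Theta_\cB=\Theta_\cB^*$ is a relation in the finite-dimensional space $\cL_\cB=\ov{\ran\cB}$. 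Now $\wt A_\tau$ and $A_0\oplus B$ are two canonical self-adjoint extensions of the symmetric relation $A\oplus S$, which has finite deficiency indices $n_\pm(A)+n_\pm(S)$; hence $(\wt A_\tau-i)^{-1}-((A_0\oplus B)-i)^{-1}$ has finite rank, and therefore $(\wt A_\tau-i)^{-1}$ is compact if and only if $(A_0-i)^{-1}$ and $(B-i)^{-1}$ are both compact. Since $(A_0-i)^{-1}$ is compact by Proposition~\ref{pr4.7}(ii), we get $\wt A\in{\rm Self}_d(\wt\gH)\iff (B-i)^{-1}$ is compact.

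\emph{Conclusion and the main obstacle.} As $\dim\cL_\cB<\infty$, $(\Theta_\cB-i)^{-1}$ is automatically compact, so $(B-i)^{-1}$ is compact iff $(\Lambda_\xi-i)^{-1}$ is, i.e.\ iff $\Lambda_\xi$ has discrete spectrum in $L^2(\xi)$. Since $\Lambda_\xi$ is multiplication by $t$ and each jump of $\xi$ has finite rank (as $\dim\cH<\infty$), this holds iff ${\rm supp}\,\xi$ has no finite accumulation point, i.e.\ iff $\xi$ is a discrete distribution in the sense of Definition~\ref{def2.6}; by Assertion~\ref{ass2.6.2}(i) the latter is equivalent to $\tau\in\Rm[\cH]$, i.e.\ to $\tau\in\RM(\cH)$. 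Chaining the equivalences yields $\wt A\in{\rm Self}_d(\wt\gH)\iff\tau\in\RM(\cH)$, as claimed. The main obstacle is the second step: importing correctly from the literature the realization of the minimal exit-space extension $\wt A_\tau$ as a coupling, and the explicit Krein--Langer model $B=\Lambda_\xi\oplus\Theta_\cB$ of the parameter with its orthogonal-sum structure and finite-dimensional ``$\cB$-summand''. Once this structural input is available, the remaining pieces — the reduction to operator-valued $\tau$, the finite-rank resolvent perturbation argument, the equivalence ``${\rm supp}\,\xi$ without finite accumulation point'' $\iff$ ``$\tau$ meromorphic'' from Assertion~\ref{ass2.6.2}, and the discreteness of $A_0$ from Proposition~\ref{pr4.7} — are routine within the framework already set up in the paper.
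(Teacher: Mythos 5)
Your overall strategy is the same as the paper's: realize the parameter in the exit space $\gH_r=\wt\gH\ominus\gH$, view $\wt A_\tau$ as a canonical self-adjoint extension of an orthogonal sum with finite deficiency indices, and reduce discreteness of $\wt A_\tau$ to discreteness of a multiplication operator in $L^2(\xi)$, which via Assertion \ref{ass2.6.2} is meromorphy of the parameter. The preliminary reductions (compact-resolvent reformulation, passage to an operator-valued parameter by the $J_\cH$-unitary $X$ of \eqref{2.24}), the finite-rank resolvent comparison and the endgame are fine. The genuine gap is the structural input you yourself flag as the main obstacle: you assert that there exist a Hilbert space $\gH_\tau$, a \emph{simple} symmetric relation $S$ with $n_\pm(S)=\dim\cH$, and an \emph{ordinary} boundary triplet for $S^*$ whose Weyl function is $\tau$. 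By Proposition \ref{pr2.8}, Weyl functions of boundary triplets lie in $R_u[\cH]$, i.e. they are uniformly strict; a parameter with $\ker \im\tau(\l)\neq\{0\}$ is therefore \emph{not} the Weyl function of any boundary triplet, so the realization you import does not exist as stated, and with it the deficiency count $n_\pm(S)=\dim\cH$ and the ensuing identification are unsupported. This is not a corner case: Theorem \ref{th4.9} allows arbitrary $\tau\in\RH$, and after your own reduction the parameter $\wt\tau=\tau_0P_{\cH_0}$ satisfies $\cK\subset\ker\im\wt\tau(\l)$, so it is automatically non-strict whenever $\mul\tau\neq\{0\}$ (and $\tau_0$ itself need not be strict either).

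The repair is exactly the step the paper inserts and you skip: by \cite[Proposition 2.3]{Mog19} (formula \eqref{4.12}) one splits $\tau_0$ into a uniformly strict part $\tau_1\in R_u[\cH']$ plus a \emph{constant} self-adjoint complement, and it is $\tau_1$, not $\tau$, that is realized (via \cite{DM00}) as the Weyl function of a simple symmetric $A_r$ in $\gH_r$ with $n_\pm(A_r)=\dim\cH'\leq\dim\cH$, after which $\wt A\in\ov{\rm ext}\,(A\oplus A_r)$. Your subsequent argument survives this correction verbatim: since the complementary blocks in \eqref{4.12} are constant, the spectral function of $\tau_0$ is carried entirely by the $\cH'$-block, so the $L^2$-model is unchanged and $\tau_1\in\Rm[\cH']\iff\tau_0\in\Rm[\cH_0]\iff\tau\in\RM(\cH)$. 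With that substitution, your compact-resolvent/finite-rank comparison is essentially equivalent to the paper's route, which transfers discreteness through Proposition \ref{pr4.7} and quotes \cite[Corollary 3.6.2]{BHS} for the model extension instead of arguing directly with $\Lambda_\xi$. So: right route, but the key imported realization must be stated for the strict part of the parameter, not for $\tau$ itself.
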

\begin{proof}
Let $\tau_0\in R [\cH_0]$ be the operator part of $\tau$ (see \eqref{2.11}). Since by Proposition \ref{pr2.7.1} $\dim\cH_0<\infty$, it follows from \cite[Proposition 2.3]{Mog19} that $\tau_0$ admits the  representation
\begin{gather}\label{4.12}
\tau_0(\l)=\begin{pmatrix} \tau_1(\l) & -B_1 \cr -B_1^* & -B_2 \end{pmatrix}:\cH'\oplus\cH''\to \cH'\oplus\cH'', \quad \l\in\CR
\end{gather}
with $\tau_1\in R_u[\cH'], \; B_1\in\B (\cH'',\cH')$ and $B_2=B_2^*\in\B (\cH'')$. Next, assume that  $\gH_r:=\wt\gH\ominus\gH$ and let $\cS\in \cex$ be a symmetric relation in $\gH$ given by $\cS=\wt A\cap\gH^2$. It was shown in the proof of Theorem 3.8 in \cite{Mog19} that there exists a boundary triplet $\Pi'=\{\cH',\G_0',\G_1'\}$ for $\cS^*$ such that $\wt A=\wt\cS_{\tau_1}$ (in the triplet $\Pi'$) and the results of \cite{DM00} imply that there exist a simple symmetric operator $A_r$ in $\gH_r$ and a boundary triplet $\Pi_r=\{\cH',\G_0^r,\G_1^r\}$ for $A_r^*$ such that $\tau_1$ is the Weyl function of $\Pi_r$ and $\wt A\in \ov{\rm ext}\, (S\oplus A_r)$. Hence $\wt A\in\ov{\rm ext}\, (A\oplus A_r),\; n_\pm(A\oplus A_r)<\infty $ and by Proposition \ref{pr4.7} the following equivalences hold:
\begin{gather*}
\wt A\in {\rm Self}_d(\wt\gH)\iff A\oplus A_r\in {\rm Sym}_d(\wt\gH)\iff A_r\in {\rm Sym}_d(\gH_r)\iff A_{0r}\in {\rm Self}_d(\gH_r),
\end{gather*}
where $A_{0r}=\ker\G_0^r$. Moreover, according to \cite[Corollary3.6.2]{BHS} $A_{0r}\in {\rm Self}_d(\gH_r)$ if and only if $\tau_1\in R_{\rm mer}[\cH']$. Finally, by \eqref{4.12} $\tau_1\in R_{\rm mer}[\cH']$ if and only if $\tau\in \RM (\cH)$. These equivalences yield the statement of the theorem.
\end{proof}
In the following theorem we show that in the case of a symmetric operator $A$ in $\gH$ with the discrete spectrum each element $f\in\gH$ admits an eigenvector expansion  due to the eigenvalue problem \eqref{4.3}, \eqref{4.4.1}.
\begin{theorem}\label{th4.10}
Assume that $\dim\gH=\infty$,  $A\in {\rm Sym}_d (\gH)$ is a densely defined operator, $n_+(A)=n_-(A)<\infty$ and $\Pi=\bt$ is a boundary triplet for $A^*$. Moreover, let $C=(C_0,C_1)$ be an entire Nevanlinna pair in $\cH$, let $\tau=\tau_C\in\RM (\cH)$ be given by \eqref{2.20}, let $\nt \; (t\in\bR)$ be the set of all solutions of the problem \eqref{4.3}, \eqref{4.4.1} and let $\wt {EV}$ be the set of all eigenvalues of the same problem.  Then:

{\rm (i)} $\wt {EV}$ is an infinite set without finite limit points, so that it can be written as an increasing infinite sequence $\wt {EV}= \{t_k\}_{\nu_-}^{\nu_+}$. Moreover, $\dim\wt\gN_{t_k}<\infty$ for any $t_k\in  \wt {EV}$.

{\rm (ii)}  for any $f\in\gH$ there exists a sequence $\{f_k\}_{\nu_-}^{\nu_+}$ of eigenvectors $f_k\in \wt\gN_{t_k} \; (t_k\in \wt {EV})$ of the problem \eqref{4.3}, \eqref{4.4.1} such that the following  eigenvector expansion of $f$ is valid:
\begin{gather}\label{4.14}
f=\sum_{k=\nu_-}^{\nu_+}f_k.
\end{gather}
Moreover,  $\wt A_\tau \in {\rm Self}_d(\wt\gH)$ with  $\wt\gH\supset  \gH$  ( for $\wt A_\tau$ see Theorem \ref{th2.10}, (ii)), $\wt {EV} = \s(\wt A_\tau)(=\s_p(\wt A_\tau))$  and the eigenvector  $f_k$ in \eqref{4.14} can be calculated via
\begin{gather*}
f_k=P_\gH E(\{t_k\})f,
\end{gather*}
where $E(\cd)$ is the orthogonal spectral measure of $\wt A_\tau$ and $P_\gH$ is the orthoprojection in $\wt\gH$ onto $\gH$.
\end{theorem}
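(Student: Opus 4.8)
The plan is to reduce everything to the spectral theory of the single exit space self-adjoint extension $\wt A_\tau$ and to Proposition \ref{pr4.4} which already identifies $\nt$ with $P_\gH\gN_t(\wt A_\tau)$ and $\wt{EV}$ with $\s_p(\wt A_\tau)$. The first step is to invoke Proposition \ref{pr2.6.8}: since $C=(C_0,C_1)\in\EP$, the function $\tau=\tau_C$ belongs to the class $\RM(\cH)$. By hypothesis $A\in\mathrm{Sym}_d(\gH)$ and $n_\pm(A)<\infty$, so Theorem \ref{th4.9} applies and gives $\wt A_\tau\in\mathrm{Self}_d(\wt\gH)$ for the associated exit space $\wt\gH\supset\gH$. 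In particular $\s(\wt A_\tau)=\s_p(\wt A_\tau)\in\cF$ and each eigenspace $\gN_{t}(\wt A_\tau)$ is finite-dimensional. Combined with Proposition \ref{pr4.4} (whose assumptions are those of Theorem \ref{th3.6}, met here since $A$ is densely defined symmetric with finite equal deficiency indices and $\tau\in\RH$, using Remark \ref{rem4.3} to rewrite the boundary condition \eqref{4.2} as \eqref{4.4.1}), this yields $\wt{EV}=\s_p(\wt A_\tau)\in\cF$ and $\dim\nt=\dim P_\gH\gN_{t}(\wt A_\tau)\le\dim\gN_t(\wt A_\tau)<\infty$; by the injectivity of $P_\gH\up\gN_t(\wt A_\tau)$ established in the proof of Proposition \ref{pr4.4}, in fact $\dim\nt=\dim\gN_{t_k}(\wt A_\tau)<\infty$ for $t_k\in\wt{EV}$.

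The remaining point in (i) is that $\wt{EV}$ is \emph{infinite}. Here I would argue by contradiction: if $\wt{EV}=\s_p(\wt A_\tau)$ were finite, then since $\wt A_\tau$ is self-adjoint with purely discrete spectrum, $\wt\gH=\bigoplus_{t_k\in\wt{EV}}\gN_{t_k}(\wt A_\tau)$ would be finite-dimensional. But $\wt\gH\supset\gH$ and $\dim\gH=\infty$ by hypothesis, a contradiction. Hence $\wt{EV}$ is infinite, and being a set in $\cF$ it can be written as a strictly increasing two-sided sequence $\{t_k\}_{\nu_-}^{\nu_+}$. This completes (i).

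For (ii), let $E(\cdot)$ be the orthogonal spectral measure of the self-adjoint operator (part of) $\wt A_\tau$ on $\wt\gH$. Since $\s(\wt A_\tau)=\{t_k\}_{\nu_-}^{\nu_+}$ is discrete, $E(\{t_k\})$ is the orthoprojection onto $\gN_{t_k}(\wt A_\tau)$ and $\sum_k E(\{t_k\})=I_{\wt\gH}$ with convergence in the strong operator topology. Fix $f\in\gH\subset\wt\gH$ and put $f_k:=P_\gH E(\{t_k\})f$. Then $E(\{t_k\})f\in\gN_{t_k}(\wt A_\tau)$, so $f_k\in P_\gH\gN_{t_k}(\wt A_\tau)=\nt[t_k]=\wt\gN_{t_k}$ by Proposition \ref{pr4.4}, i.e.\ $f_k$ is an eigenvector of the problem \eqref{4.3}, \eqref{4.4.1} for the eigenvalue $t_k$ (note $f_k$ could a priori vanish, but then that index is simply dropped). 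Applying the bounded operator $P_\gH$ to the identity $f=\sum_k E(\{t_k\})f$ and using continuity of $P_\gH$ yields $f=P_\gH f=\sum_k P_\gH E(\{t_k\})f=\sum_k f_k$, which is \eqref{4.14}, the series converging in $\gH$. The statements $\wt A_\tau\in\mathrm{Self}_d(\wt\gH)$, $\wt{EV}=\s(\wt A_\tau)=\s_p(\wt A_\tau)$, and the formula $f_k=P_\gH E(\{t_k\})f$ have all been recorded along the way.

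The main obstacle is essentially bookkeeping rather than a deep difficulty: one must carefully check that the hypotheses of Theorems \ref{th3.6}, \ref{th4.9} and Proposition \ref{pr4.4} are genuinely in force (densely defined $A$, finite equal deficiency indices, $\tau\in\RH$ hence $\tau\in\RM(\cH)\subset\RH$), and one must be honest about the fact that $P_\gH$ restricted to an individual eigenspace $\gN_{t_k}(\wt A_\tau)$ is injective — this is exactly what is proved inside Proposition \ref{pr4.4} from the simplicity/no-reduction property of the exit space extension, and it is what guarantees $\dim\wt\gN_{t_k}=\dim\gN_{t_k}(\wt A_\tau)$ and that the decomposition $f=\sum f_k$ faithfully reflects the spectral decomposition in $\wt\gH$. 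No separate convergence estimate is needed since convergence in $\gH$ is inherited directly from the strong convergence of the spectral resolution in $\wt\gH$.
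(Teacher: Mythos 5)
Your proof is correct and follows essentially the same route as the paper: Theorem \ref{th4.9} gives $\wt A_\tau\in{\rm Self}_d(\wt\gH)$, Proposition \ref{pr4.4} identifies $\wt{EV}$ with $\s_p(\wt A_\tau)$ and $\wt\gN_{t_k}$ with $P_\gH\gN_{t_k}(\wt A_\tau)$, and the expansion \eqref{4.14} is obtained by applying the bounded projection $P_\gH$ to the spectral decomposition $f=\sum_k E(\{t_k\})f$. Your explicit contradiction argument for the infiniteness of $\wt{EV}$ (using $\dim\gH=\infty$, and legitimate since the densely defined case forces $\wt A_\tau$ to be an operator) merely fills in a step the paper leaves implicit, so no further comment is needed.
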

\begin{proof}
(i) It follows from Theorem \ref{th4.9} that $\wt A_\tau\in {\rm Self}_d(\wt\gH)$. Moreover, by \eqref{4.5} $\wt {EV} = \s(\wt A_\tau)(=\s_p(\wt A_\tau))$. This yields statement (i).

(ii) Let $f\in\gH$.  Then for any $t_k\in \wt {EV}$ one has $E(\{t_k\})f\in \gN_{t_k}(\wt A)$ and by \eqref{4.5} $f_k:=P_\gH E(\{t_k\})f \in \wt\gN_{t_k}$. Since $\wt A_\tau \in {\rm Self}_d(\wt\gH)$, it follows that $f=\sum_{k=\nu_-}^{\nu_+}E(\{t_k\})f$. Therefore
\begin{gather*}
f=P_\gH f=\sum_{k=\nu_-}^{\nu_+}P_\gH E(\{t_k\})f= \sum_{k=\nu_-}^{\nu_+}f_k,
\end{gather*}
which yields \eqref{4.14}.
\end{proof}
\section{Eigenfunction expansions for differential equations}
\subsection{Notations}
Let $\cI=[ a,b\rangle\; (-\infty < a< b\leq\infty)$ be an interval of the real line (the endpoint $b<\infty$  might be either
included  to $\cI$ or not). Denote by $AC(\cI)$ the set of functions $f:\cI\to \bC$ which are absolutely
continuous on each compact interval $[a,b']\subset \cI$.

Assume that $\D:\cI\to \bR$ is a nonnegative function integrable on each compact interval $[a,b']\subset \cI$. Denote  by $\lI$  the semi-Hilbert  space of  Borel measurable functions $f: \cI\to \bC$ satisfying $||f||_\D^2:=\int\limits_{\cI}\D
(x)|f(x)|^2\,dx<\infty$.  The
semi-definite inner product $(\cd,\cd)_\D$ in $\lI$ is defined by $(f,g)_\D=\int\limits_{\cI}\D (x)f(x)\ov{g(x)}\,dx,\quad f,g\in \lI$. Moreover, let $\LI$ be the Hilbert space of the equivalence classes in $\lI$ with respect to the semi-norm $||\cd||_\D$. Denote also by $\pi_\D$  the quotient map from $\lI$ onto
$\LI$. Clearly, $\ker \pi_\D$ coincides with the set of all Borel measurable functions $f:\cI\to \bC$ such that $\D(x) f(x)=0$ (a.e. on $\cI$).

As is known \cite[Ch13.5]{DunSch} each distribution $\xi:\bR\to \B (\bC^r)$ gives rise to the semi-Hilbert space $\lS$ of all Borel-measurable functions $g:\bR\to \bC^r$ such that $||g||_{\lS}^2=\int_\bR (d\xi(t)g(t),g(t))<\infty$. In the following we denote by $\LS$ the Hilbert space  of all equivalence classes in $\lS$ with respect to the seminorm $||\cd||_{\lS}$. Moreover, we denote by $\pi_\xi$ the quotient map from $\lS$ onto $\LS$.

With a $\B (\bC^r)$-valued distribution $\xi$ one associates the multiplication operator   $\L_\xi(=\L_\xi^*)$  in $\LS$.
The orthogonal spectral measure $E_\xi(\cd)$ of $\Lambda_\xi$ is given on Borel sets $\d\subset \bR$ by
\begin {equation}\label{5.0}
E_\xi(\d)\wt g= \pi_\xi (\chi_\d g),\;\;\; \wt g \in\LS ,\;\; g\in \wt g,
\end{equation}
where $\chi_\d$ is the indicator of $\d$.
\subsection{Differential equations with the nontrivial weight}
Assume that $\cI=[a,b\rangle \; (-\infty<a<b\leq \infty)$ is an interval in $\bR$ and let
\begin {equation}\label{5.1}
l[y]= \sum_{k=1}^r  (-1)^k  (p_{r-k}(x)y^{(k)})^{(k)} + p_r(x) y
\end{equation}
be a symmetric differential expression  of  an even order $n=2r$ on $\cI$ with real- valued coefficients $p_j(\cd):\cI\to \bR$. We assume that functions  $p_0^{-1}$ and $p_j, \; j\in\{1,\dots , r\}$ are integrable on each compact interval $[a,b']\subset\cI$ (this means that the endpoint $a$ is regular for $l[y]$).

Following to \cite{Nai,Wei} we denote by $y^{[j]}, \; j\in \{0,1,\; \dots,\; 2r\},$ the quasi-derivatives of a function $y:\cI\to \bC$ (here $y^{[0]}=y$). Denote also by $\dom l$ the set of all functions $y:\cI\to \bC$ such that $y^{[j]}\in\AC $ for  $j\leq2r-1$ and let $l[y]=y^{[2r]},\; y\in\dom l$. With a function $y\in\dom l$ one associates the vector-functions $y^{(j)}:\cI\to \bC^r, \; j\in\{1,2\},$ given by
\begin{gather}\label{5.1.1}
y^{(1)}=y \oplus y^{[1]} \oplus \dots \oplus y^{[r-1]},\; \;\;\;\; y^{(2)}= y^{[2r-1]} \oplus  y^{[2r-2]}\oplus \dots\oplus y^{[r]}.
\end{gather}
We consider the differential equation
\begin{gather}\label{5.2}
l[y]=\l \D(x) y, \quad x\in\cI,\;\;\l\in\bC
\end{gather}
with the weight $\D:\cI\to\bR$   integrable on each compact interval $[a,b']\subset\cI$ and satisfying $\D(x)\geq 0$ a.e.on $\cI$. In the following we assume  that the weight $\D$ is nontrivial  and not necessarily positive (see Definition \ref{def1.0}). A function $y\in\dom l$ is a solution of \eqref{5.2}, if it  satisfies \eqref{5.2}  a.e. on $\cI$. An $m$-component  operator function  \begin{gather}\label{5.3}
Y(x,\l)=(Y_1(x,\l),\, Y_2(x,\l),\, \dots ,\, Y_m(x,\l) ):\bC\oplus\bC \dots \oplus \bC\to \bC,\quad x\in\cI
\end{gather}
with values in $\B(\bC^m,\bC)$ is called an operator solution of \eqref{5.2}, if each component $Y_j(x,\l)$ is a (scalar) solution of \eqref{5.2}. With each such  a solution $Y(x,\l)$ we associate the operator functions $Y^{(j)}:\cI\to \B (\bC^m,\bC^r), \; j\in\{1,2\},$ given by $Y^{(1)}(x,\l)=\bigl( Y_k^{[j-1]}(x,\l) \bigr)_{j=1 \, k=1}^{\;\;r\,\;\;\;m}$ and $Y^{(2)}(x,\l)=\bigl( Y_k^{[2r-j]}(x,\l) \bigr)_{j=1 \, k=1}^{\;\;r\,\;\;\;m}, \; x\in\cI$.

Denote by $\cD_{\rm max}$ the linear manifold in $\lI$ given by
\begin{multline}\label{5.5}
\cD_{\rm max}=\{y\in\dom l\cap \lI: \, l[y]=\D(x) f_y(x) \;\;
(\text{a.e. on}\;\; \cI)\\
 \text{with some} \;\; f_y\in \lI\}.
\end{multline}
Clearly if  $y\in\cD_{\rm max}$ and  $f_{1y}$ and $f_{2y}$ are two functions from \eqref{5.5}, then $\pi_\D f_{1y}=\pi_\D f_{2y}$. Therefore for a given $y\in\cD_{\rm max}$ the function $f_y$ in \eqref{5.5} is defined uniquely up to $\D$-equivalence.

As is known,  for any $y,z\in \cD_{\rm max}$ there exists the limit
\begin{gather*}
[y,z]_b:=\lim_{x\uparrow b} ((y^{(1)}(x), z^{(2)}(x))_{\bC^r}-(y^{(2)}(x), z^{(1)}(x))_{\bC^r}) .
\end{gather*}
This fact enables one to define the linear manifold $\cD_{\rm min}$ in $\lI$ by setting
\begin{gather*}
\cD_{\rm min}=\{y\in \cD_{\rm max}:\, y^{(1)}(a)=y^{(2)}(a)=0\;\; {\rm and}\;\; [y,z]_b=0\;\; \text{for every}\;\; z\in \cD_{\rm max}\}
\end{gather*}

For $\l\in\bC$ denote by $\cN_\l$ the linear space of all solutions $y$ of  \eqref{5.2} belonging to $\lI$ (clearly, $\cN_\l\in \Dma$). It turns out that the number $N_+=\dim \cN_{\l}, \; \l\in\bC_+$  ($N_-=\dim \cN_\l,\; \l\in\bC_-)$ does not depend on $\l\in\bC_+$ (resp. $\l\in\bC_-$). The numbers $N_\pm$ are called the formal deficiency indices of the equation \eqref{5.2}. It turns out that $r\leq N_+=N_-\leq 2r$.  In the following we put $d:=N_\pm$.

Below within this subsection we recall some results from our paper \cite{Mog20} concerning equation \eqref{5.2} with the nontrivial Weight $\D$.
\begin{theorem}\label{th5.2}
For the differential equation \eqref{5.2} the equalities
\begin{gather*}
\wt y=\pi_\D y, \quad \Sma \wt y=\pi_\D f_y, \;\; y\in\Dma;\qquad
\wt y=\pi_\D y, \quad \Smi \wt y=\pi_\D f_y, \;\; y\in\Dmi
\end{gather*}
correctly define the linear operators $\Sma:\dom\Sma\to \LI$ (the maximal operator) and $\Smi:\dom\Smi\to \LI$ (the minimal operator) with the domains $\dom\Sma=\pi_\D\Dma\subset \LI$ and  $\dom\Smi=\pi_\D\Dmi\subset \LI$ respectively. Moreover, $\Smi$ is a closed densely defined symmetric  operator with equal deficiency indices $n_\pm(\Smi)=d<\infty$ and $\Sma=\Smi^*$.
\end{theorem}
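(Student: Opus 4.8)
The plan is to reduce everything to two ingredients: the behaviour of the quasi-derivatives and the Lagrange (Green's) identity for $l[y]$, together with the classical description of the minimal/maximal operators for a differential expression with a degenerating weight. First I would check that the assignments $\wt y=\pi_\D y$, $\Sma\wt y=\pi_\D f_y$ are well defined. This has two parts: (a) the value $\pi_\D f_y$ does not depend on the choice of $f_y$ in \eqref{5.5} --- this is exactly the remark after \eqref{5.5}, since two admissible $f_y$'s differ by an element of $\ker\pi_\D$; (b) the value does not depend on the representative $y$ of the class $\wt y=\pi_\D y$, i.e. if $\pi_\D y_1=\pi_\D y_2$ with $y_1,y_2\in\Dma$ then $\pi_\D f_{y_1}=\pi_\D f_{y_2}$. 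Part (b) is the only genuinely nontrivial point in the well-definedness: from $\D(x)(y_1(x)-y_2(x))=0$ a.e.\ one cannot immediately conclude $\D(x)(f_{y_1}(x)-f_{y_2}(x))=0$ a.e., because $l[\cdot]$ is a differential operator and $y_1-y_2$ need not vanish where $\D=0$. I expect this to be handled exactly as in \cite{Mog20}: on any maximal open subinterval $(\a,\b)\subset\cI$ on which $\D=0$ a.e., the function $z:=y_1-y_2$ satisfies $l[z]=0$ a.e.\ on $(\a,\b)$ (since $l[z]=\D f_z$ there), so $f_z=\D^{-1}l[z]$ can be taken to be $0$ on $(\a,\b)$; off the union of such intervals $\D>0$ a.e., and there $z=0$ forces (by regularity of the coefficients and the structure of quasi-derivatives) $l[z]=0$ a.e.\ as well. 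Hence $\pi_\D f_z=0$, giving (b). The same argument handles $\Smi$, using in addition that the boundary data $y^{(1)}(a),y^{(2)}(a)$ and the form $[y,z]_b$ depend only on $y$ as a genuine function and are unaffected by $\D$-equivalence once one fixes a representative in $\Dmi$.

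Next I would establish the structural properties of $\Smi$. \emph{Symmetry}: for $y,z\in\Dmi$, Green's identity for $l[\cdot]$ on $[a,b']$ gives
\begin{gather*}
\int_a^{b'}(l[y]\ov z - y\,\ov{l[z]})\,dx=\bigl[(y^{(1)},z^{(2)})_{\bC^r}-(y^{(2)},z^{(1)})_{\bC^r}\bigr]_a^{b'},
\end{gather*}
and letting $b'\uparrow b$ the right-hand side vanishes by the definition of $\Dmi$ (the term at $a$ vanishes since $y^{(1)}(a)=y^{(2)}(a)=0$, the term at $b$ by $[y,z]_b=0$); the left-hand side equals $(\Smi\wt y,\wt z)_\D-(\wt y,\Smi\wt z)_\D$ after substituting $l[y]=\D f_y$, $l[z]=\D f_z$ and passing to equivalence classes. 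Hence $\Smi\subset\Smi^*$. \emph{Dense domain}: $\Dmi$ contains all $y\in\dom l$ with $y$ compactly supported inside the open interval $(a,b)$ and supported where $\D>0$; the image under $\pi_\D$ of such functions is dense in $\LI$ because, by nontriviality of $\D$ (Definition \ref{def1.0}), the set $\{\D>0\}$ has positive measure and one can approximate any $f\in\LI$ by smooth functions supported there. \emph{$\Sma=\Smi^*$}: this is where I would lean on \cite{Mog20}. One inclusion $\Sma\subseteq\Smi^*$ follows from Green's identity as above. For the reverse, given $\wt g\in\dom\Smi^*$ with $\Smi^*\wt g=\wt h$, pick representatives $g,h$; the defining equality $(\Smi\wt y,\wt g)_\D=(\wt y,\wt h)_\D$ for all $y\in\Dmi$ translates, via the $L^2_\D$ pairing and a variational (du Bois-Reymond type) argument, into $l[g]=\D h$ a.e.\ on $\cI$, i.e.\ $g\in\Dma$ and $\Sma\wt g=\wt h$. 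The main obstacle here is precisely that one must recover a \emph{pointwise} differential identity from an identity of equivalence classes in $\LI$; this is the technical heart and is carried out in \cite{Mog20}, so I would cite it.

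Finally, the deficiency indices. By construction $\ker(\Sma-\l)=\pi_\D\cN_\l$ (solutions of \eqref{5.2} lying in $\lI$), and $\pi_\D$ restricted to $\cN_\l$ is injective: if $y\in\cN_\l$ and $\D y=0$ a.e., then $y$ vanishes on the positive-measure set $\{\D>0\}$ and hence, being a solution of a linear ODE with the stated regular coefficients, on a neighbourhood of a point of that set, whence $y\equiv0$ by uniqueness for the initial value problem (the endpoint $a$ being regular). Therefore $\dim\ker(\Sma-\l)=\dim\cN_\l=N_\pm=d$ for $\l\in\bC_\pm$, and since $\Sma=\Smi^*$ this says $n_\pm(\Smi)=d$. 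The bound $d<\infty$ (indeed $r\le d\le 2r$) was recorded just before the statement. Closedness of $\Smi$: $\Smi^*=\Sma$ is automatically closed as an adjoint, and $\Smi=\Smi^{**}$ provided $\Smi$ is closable, which follows from $\overline{\Smi}\subset\Smi^{**}=\Sma^*=(\Smi^*)^*$ being an operator (its domain-graph argument uses that a limit of solutions-with-data in $\Dmi$ again has zero boundary data, by continuity of $y\mapsto(y^{(1)}(a),y^{(2)}(a))$ and of $[\cdot,z]_b$). I expect the bulk of the write-up to consist of quoting \cite{Mog20} for the identity $\Sma=\Smi^*$ and for the closedness of $\Smi$, with the well-definedness (part (b) above) and the injectivity of $\pi_\D$ on $\cN_\l$ being the points deserving an explicit short argument.
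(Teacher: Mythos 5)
The paper itself contains no proof of Theorem \ref{th5.2}: it is recalled verbatim from \cite{Mog20}, so your plan has to be judged on its own merits. Its architecture is reasonable (well-definedness of $\Sma$ as a single-valued operator, Green's identity for symmetry of $\Smi$, citing \cite{Mog20} for $\Sma=\Smi^*$, and counting solutions for the deficiency indices), and your step (b) is essentially right once it is argued on the set $\{\D>0\}$ itself rather than on ``maximal intervals where $\D=0$'' (that set need not be a union of intervals; the correct statement is that $z\in\dom l$ and $z=0$ a.e.\ on a measurable set $E$ force, successively, all quasi-derivatives and hence $l[z]$ to vanish a.e.\ on $E$, so $\D f_z=l[z]=0$ a.e.\ on $\{\D>0\}$ and trivially on $\{\D=0\}$).

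The genuine gap is your density argument. A compactly supported $y\in\dom l$ need not belong to $\Dma$, let alone $\Dmi$: membership requires $l[y]=\D f_y$ with $f_y\in\lI$, which forces $l[y]=0$ a.e.\ on $\{\D=0\}$ and, where $\D>0$, the finiteness of $\int\D^{-1}|l[y]|^2dx$ --- neither holds for an arbitrary smooth compactly supported $y$ when $\D$ degenerates; moreover $\{\D>0\}$ is merely a measurable set, so ``smooth functions supported there'' need not exist at all. So the set you propose is not contained in $\Dmi$ and cannot carry the density proof. The repair consistent with the rest of your plan is to drop this argument entirely: once $\Smi^*=\Sma$ is granted from \cite{Mog20} and $\Sma$ is single-valued (your step (b)), density of $\dom\Smi$ is automatic from $(\dom\Smi)^\perp=\mul\Smi^*=\mul\Sma=\{0\}$. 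Two smaller points need the same kind of repair. Injectivity of $\pi_\D$ on $\cN_\l$ does not follow because $y$ ``vanishes on a neighbourhood of a point'' of $\{\D>0\}$ (it need not); it follows because $y=0$ a.e.\ on a set of positive measure makes $y$ and all its quasi-derivatives vanish at some point, whence $y\equiv0$ by uniqueness for the associated first-order system --- this is exactly the definiteness property quoted in the paper from \cite[Proposition 5.11]{Mog20}. Finally, closedness of $\Smi$ is not about closability (automatic for a densely defined symmetric operator) nor about continuity of the boundary maps in the graph norm (not known a priori); it amounts to the companion identity $\Sma^*=\Smi$, which again has to be taken from \cite{Mog20}.
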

\begin{proposition}\label{pr5.4}
Assume that $B=B^*\in\B (\bC^r)$ and let $\cD$ and $\cD_*$ be linear manifolds in $\lI$ given by
\begin{gather}
\cD=\{y\in\Dma: \cos B\cd y^{(1)}(a)+\sin B\cd y^{(2)}(a)=0 \;\; {\rm and}\;\; [y,z]_b=0, \, z\in\Dma   \}\label{5.9}\\
\cD_*=\{y\in\Dma: \cos B\cd y^{(1)}(a)+\sin B\cd y^{(2)}(a)= 0\}\label{5.10}
\end{gather}
Then the equalities
\begin{gather}\label{5.11}
\wt y=\pi_\D y, \quad S \wt y=\pi_\D f_y, \;\; y\in\cD;\qquad
\wt y=\pi_\D y, \quad S^* \wt y=\pi_\D f_y, \;\; y\in\cD_*
\end{gather}
correctly define the linear operators $S:\dom S\to \LI$ and $S^*:\dom S^*\to \LI$ with the domains $\dom S=\pi_\D\cD\subset \LI$ and  $\dom S^*=\pi_\D\cD_*\subset \LI$ respectively. Moreover, $S$ is a closed symmetric  extension of $\Smi$ with the deficiency indices $n_\pm(S)=d-r$ and $S^*$ is the adjoint of $S$.
\end{proposition}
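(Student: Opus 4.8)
The plan is to identify $S$ with the extension $A_\t$ and $S^*$ with $A_{\t^*}$ produced, through a suitable boundary triplet for $\Sma=\Smi^*$, by one explicit linear relation $\t$ in $\cH=\bC^r\oplus\bC^{d-r}$, and then to read off all the assertions from Theorem~\ref{th2.10} together with the adjoint calculus for boundary triplets.

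First I would use the decomposition boundary triplet $\Pi=\bt$ for $\Sma=\Smi^*$ supplied by \cite{Mog20}, with $\cH=\bC^r\oplus\bC^{d-r}$ and $\G_j=(\G_j',\G_j'')^\top$ splitting accordingly, so that $\G_j':\Sma\to\bC^r$ carries the data at the regular endpoint $a$ and $\G_j'':\Sma\to\bC^{d-r}$ the singular boundary values at $b$; the normalisation is $\G_0'(\pi_\D y)=y^{(1)}(a)$, $\G_1'(\pi_\D y)=y^{(2)}(a)$ for $y\in\Dma$, and in Green's identity the $\bC^r$-summand produces $-[y,z]_a$ while the $\bC^{d-r}$-summand produces $[y,z]_b$. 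Since $\G$ is onto $\cH\oplus\cH$, the block $(\G_0'',\G_1'')$ is onto $\bC^{d-r}\oplus\bC^{d-r}$, and hence for $\wh f=\pi_\D y$ the condition $\G_0''\wh f=\G_1''\wh f=0$ is equivalent to $[y,z]_b=0$ for every $z\in\Dma$ (vary $\wh g$ in the form $(\G_1''\wh f,\G_0''\wh g)-(\G_0''\wh f,\G_1''\wh g)=[y,z]_b$).

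Next I would put $\t_B:=\{\{h,h'\}\in\bC^r\oplus\bC^r:(\cos B)h+(\sin B)h'=0\}$; since $B=B^*$ this is a self-adjoint relation in $\bC^r$, $\t_B^*=\t_B$, of dimension $r$ (the classical description of separated self-adjoint conditions at a regular endpoint). Let $\t:=\{\{h\oplus 0,h'\oplus 0\}:\{h,h'\}\in\t_B\}\subset\cH\oplus\cH$, i.e. the direct sum of $\t_B$ with the trivial relation $\{\{0,0\}\}$ in $\bC^{d-r}$. Then $\t$ is a closed symmetric relation in $\cH$ with $\dim\t=r$, whence $n_\pm(\t)=d-r$, and its adjoint relation is $\t^*=\t_B\oplus(\bC^{d-r}\oplus\bC^{d-r})$ (the adjoint of $\{\{0,0\}\}$ being all of $\bC^{d-r}\oplus\bC^{d-r}$). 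Now set $S:=A_\t$ and $S^*:=A_{\t^*}$ in the triplet $\Pi$. By Theorem~\ref{th2.10}(i), $A_\t$ is a closed symmetric extension of $\Smi$ with $n_\pm(A_\t)=n_\pm(\t)=d-r$; since $\Smi$ is densely defined (Theorem~\ref{th5.2}), $\Sma=\Smi^*$ is an operator, and hence so are $A_\t$ and $A_{\t^*}$. Moreover $(A_\t)^*=A_{\t^*}$: this is immediate from Green's identity, because $\wh g\in\Sma$ lies in $(A_\t)^*$ iff $(\G_1\wh f,\G_0\wh g)_\cH-(\G_0\wh f,\G_1\wh g)_\cH=0$ for all $\wh f\in A_\t$, i.e. for $\{\G_0\wh f,\G_1\wh f\}$ ranging over all of $\t$ (using surjectivity of $\G$), which says precisely $\{\G_0\wh g,\G_1\wh g\}\in\t^*$.

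It then remains to unwind the definitions. For $\wh f=\{\pi_\D y,\pi_\D f_y\}\in\Sma$ the inclusion $\{\G_0\wh f,\G_1\wh f\}\in\t$ means $(\cos B)y^{(1)}(a)+(\sin B)y^{(2)}(a)=0$ together with $\G_0''\wh f=\G_1''\wh f=0$, i.e. $[y,z]_b=0$ for every $z\in\Dma$; hence $\dom A_\t=\pi_\D\cD$, and since $A_\t\subset\Sma$ acts by $\pi_\D y\mapsto\pi_\D f_y$, the first group of equalities in \eqref{5.11} correctly defines $S=A_\t$ (well-definedness being inherited from $\Sma$ in Theorem~\ref{th5.2}). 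Likewise $\{\G_0\wh f,\G_1\wh f\}\in\t^*$ means only $(\cos B)y^{(1)}(a)+(\sin B)y^{(2)}(a)=0$, so $\dom A_{\t^*}=\pi_\D\cD_*$ and the second group of equalities in \eqref{5.11} correctly defines $S^*=A_{\t^*}$. Collecting these, $S$ is a closed symmetric extension of $\Smi$ with $n_\pm(S)=d-r$, and $S^*=A_{\t^*}=(A_\t)^*=(S)^*$ is the adjoint of $S$. The main obstacle is the first step — having in hand, or quoting in exactly the form needed from \cite{Mog20}, the decomposition boundary triplet for \eqref{5.1} on $[a,b\rangle$: the existence of $\bC^{d-r}$-valued singular boundary values at $b$ whose joint kernel is exactly $\{y\in\Dma:[y,z]_b=0\ \forall z\in\Dma\}$, arranged so that the two endpoints sit in the two orthogonal summands of $\cH$. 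Once that is granted, the rest is bookkeeping with Theorem~\ref{th2.10} and the adjoint-relation calculus, the only other nontrivial (and classical) ingredient being the self-adjointness of $\t_B$ in $\bC^r$.
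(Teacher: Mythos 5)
The paper does not actually prove Proposition \ref{pr5.4}: it is recalled verbatim from \cite{Mog20} (the subsection opens with ``we recall some results from our paper \cite{Mog20}''), so there is no in-paper argument to compare yours against. Taken on its own terms, your reconstruction is sound and is the natural boundary-triplet argument: build a triplet for $\Sma=\Smi^*$ with $\cH=\bC^r\oplus\bC^{d-r}$ (regular-endpoint data in the first summand, the surjective singular trace $(\G_{0b},\G_{1b})^\top$ satisfying \eqref{5.12.1} in the second, with the sign normalisation $\G_1''=-\G_{1b}$ so that Green's identity comes out right), take $\t=\t_B\oplus\{\{0,0\}\}$, and read off everything from Theorem \ref{th2.10} plus $(A_\t)^*=A_{\t^*}$, which you correctly prove directly since the paper's Theorem \ref{th2.10} does not state it. The algebra is all right: $\t_B$ is self-adjoint of dimension $r$, $\t$ is closed symmetric with $n_\pm(\t)=d-r$, the $b$-conditions $\G_0''\wh f=\G_1''\wh f=0$ are equivalent to $[y,z]_b=0$ for all $z\in\Dma$ by surjectivity of $(\G_{0b},\G_{1b})^\top$, and unwinding $\t$ and $\t^*$ gives exactly $\pi_\D\cD$ and $\pi_\D\cD_*$.

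The one point you flag only obliquely, and which is the real content in the degenerating-weight setting, is the well-definedness of the triplet for $\Sma$ itself. Your normalisation ``$\G_0'(\pi_\D y)=y^{(1)}(a)$, $\G_1'(\pi_\D y)=y^{(2)}(a)$'' presupposes that every pair $\{\wt y,\Sma\wt y\}\in\Sma$ has a \emph{unique} representative $y\in\Dma$ with $\pi_\D y=\wt y$, $\pi_\D f_y=\Sma\wt y$; otherwise the boundary maps are not functions on $\Sma\subset\LI^2$ and no triplet exists. This is precisely the analogue of Proposition \ref{pr5.4.1}(i) and rests on the definiteness of equation \eqref{5.2} ($l[y]=0$ and $\D y=0$ a.e.\ imply $y=0$), which is nontrivial when $\D$ vanishes on intervals and is supplied by \cite{Mog20}. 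Your ``main obstacle'' paragraph asks for the existence of the singular traces at $b$ with the right joint kernel, but the decisive input is this uniqueness-of-representative statement; once you quote it (together with the surjectivity of the joint boundary map, which then follows from $\ker\G=\Smi$, $\dim(\Sma/\Smi)=2d$ and a dimension count), your argument is complete.
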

\begin{remark}\label{rem5.4.0}
In the case of the equation \eqref{5.2} with the positive weight $\D$ statements of Theorem \ref{th5.2} and Proposition \ref{pr5.4} are the well-known classical results (see e.g. \cite{Nai,Wei}).
\end{remark}
\begin{proposition}\label{pr5.4.1}
Let $B=B^*\in\B (\bC^r)$, let $S$ and $S^*$ be operators  defined in Proposition \ref{pr5.4} and let $\G_b=(\G_{0b}, \G_{1b})^\top:\Dma \to (\bC^{d-r})^2$ be a surjective linear operator satisfying
\begin{gather}\label{5.12.1}
[y,z]_b=(\G_{0b}y,\G_{1b}z)- (\G_{1b}y,\G_{0b}z),\quad y,z \in \Dma
\end{gather}
(according to \cite{Mog20} such an operator exists). Then:

{\rm(i)} for each $\wt y\in\dom S^*$ there exists a unique $y\in D_*$ such that $\pi_\D y=\wt y$ and $\pi_\D f_y=S^* \wt y$ (here $f_y$ is taken from \eqref{5.5});

{\rm(ii)} the  collection $\Pi=\{\bC^{d-r},\G_0,\G_1\}$ with operators $\G_j: \dom S^*\to \bC^{d-r}$ given by
\begin{gather}\label{5.13}
\G_0\wt y=\G_{0b} y, \qquad \G_1\wt y=-\G_{1b} y, \quad \wt y\in \dom S^*
\end{gather}
is a boundary triplet for $S^*$ (in \eqref{5.13} $y\in \wt y$ is a function from statement {\rm (i)}).
\end{proposition}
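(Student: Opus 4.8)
The plan is to read Proposition~\ref{pr5.4.1} off from the description of $S$ and $S^*$ in Proposition~\ref{pr5.4}, the Lagrange identity for $l[y]$, and the defining property \eqref{5.12.1} of $\G_b$. The only ingredient that genuinely uses the admissible degeneration of $\D$ is the following fact, call it $(\star)$, which I would establish first: \emph{a nonzero solution $u$ of $l[u]=\l\D u$ does not vanish on a set of positive Lebesgue measure}; in particular, since $\{x:\D(x)>0\}$ has positive measure, $\D u=0$ a.e.\ on $\cI$ forces $u=0$. Granting $(\star)$, part~(i) is short. Existence is immediate: by Proposition~\ref{pr5.4}, $\dom S^*=\pi_\D\cD_*$, so each $\wt y\in\dom S^*$ has a representative $y\in\cD_*$, and then $\pi_\D f_y=S^*\wt y$ by the definition \eqref{5.11} of $S^*$. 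For uniqueness, if $y_1,y_2\in\cD_*$ both satisfy $\pi_\D y_i=\wt y$ and $\pi_\D f_{y_i}=S^*\wt y$, then $u:=y_1-y_2\in\cD_*$ obeys $\pi_\D u=0$ and $\pi_\D f_u=0$ with $f_u:=f_{y_1}-f_{y_2}$, so $l[u]=\D f_u=0$ a.e.\ on $\cI$, i.e.\ $u$ is a solution of $l[u]=0$ with $\D u=0$ a.e.; by $(\star)$, $u=0$ and $y_1=y_2$.

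To prove $(\star)$ I would argue as follows. Suppose $u\not\equiv0$ solves $l[u]=\l\D u$ and vanishes on a set $E$ with $|E|>0$. All quasi-derivatives $u^{[0]},\dots,u^{[2r-1]}$ lie in $\AC$, and each $u^{[j+1]}$ equals $(u^{[j]})'$ plus an $L^1_{\rm loc}$ coefficient times a lower-order quasi-derivative. Using that at almost every density point of the zero set of an absolutely continuous function both the function and its (a.e.\ existing) derivative vanish, one iterates over $j=0,1,\dots,2r-1$: at each stage the set on which $u^{[0]},\dots,u^{[j]}$ all vanish still has positive measure, and passing repeatedly to its density points yields a point $x_0\in\cI$ at which $u^{[0]}(x_0)=\dots=u^{[2r-1]}(x_0)=0$. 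Unique solvability of the Cauchy problem for $l[u]=\l\D u$ (valid with $L^1_{\rm loc}$ coefficients) then gives $u\equiv0$, a contradiction. The same reasoning shows $\pi_\D$ is injective on all of $\Dma$: if $y\in\Dma$ and $\pi_\D y=0$, then $\pi_\D f_y=0$ by the well-definedness of $\Sma$ in Theorem~\ref{th5.2}, so $l[y]=0$ a.e.\ and $(\star)$ forces $y=0$.

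For part~(ii) I would verify the two requirements of Definition~\ref{def2.7} for $\Pi=\{\bC^{d-r},\G_0,\G_1\}$, the maps $\G_0,\G_1$ being well defined on $\dom S^*$ by part~(i). For Green's identity, take $\wt y,\wt z\in\dom S^*$ with representatives $y,z\in\cD_*$; using $\D f_y=l[y]$, $\D f_z=l[z]$ and the Cauchy--Schwarz inequality in $\lI$ to see $l[y]\,\ov z,\ y\,\ov{l[z]}\in L^1(\cI)$,
\[
(S^*\wt y,\wt z)_\D-(\wt y,S^*\wt z)_\D=\int_\cI\bigl(l[y]\,\ov z-y\,\ov{l[z]}\bigr)\,dx=[y,z]_b-[y,z]_a ,
\]
the last equality being the Lagrange identity passed to the limit $x\uparrow b$ (the limit $[y,z]_b$ exists on $\Dma$). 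Here $[y,z]_a=(y^{(1)}(a),z^{(2)}(a))-(y^{(2)}(a),z^{(1)}(a))=0$, because the condition defining $\cD_*$ in \eqref{5.10} puts $(y^{(1)}(a),y^{(2)}(a))$ and $(z^{(1)}(a),z^{(2)}(a))$ into $\ker(\cos B,\sin B)$, which is a Lagrangian subspace for the symplectic form $(v_1,w_2)-(v_2,w_1)$: since $B=B^*$, the operators $\cos B,\sin B$ are commuting self-adjoint with $(\cos B)^2+(\sin B)^2=I$ and $(\cos B,\sin B)\,J_{\bC^r}\,(\cos B,\sin B)^*=0$, with $J_{\bC^r}$ as in \eqref{2.18.1}. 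Combining this with \eqref{5.12.1} and \eqref{5.13},
\[
(S^*\wt y,\wt z)_\D-(\wt y,S^*\wt z)_\D=[y,z]_b=(\G_{0b}y,\G_{1b}z)-(\G_{1b}y,\G_{0b}z)=(\G_1\wt y,\G_0\wt z)-(\G_0\wt y,\G_1\wt z),
\]
which is precisely Green's identity of Definition~\ref{def2.7}.

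For surjectivity of $\G=(\G_0,\G_1)^\top\colon\dom S^*\to(\bC^{d-r})^2$, I would note that since $\pi_\D$ is injective on $\Dma$ it induces an isomorphism $\Dma/\Dmi\cong\Sma/\Smi$, whence $\dim\Dma/\Dmi=\dim\Sma/\Smi=n_+(\Smi)+n_-(\Smi)=2d$. By \eqref{5.12.1}, the surjectivity of $\G_b$ and the definition of $\Dmi$, the map $y\mapsto\bigl((y^{(1)}(a),y^{(2)}(a)),(\G_{0b}y,\G_{1b}y)\bigr)$ from $\Dma$ into $\bC^{2r}\oplus(\bC^{d-r})^2$ has kernel exactly $\Dmi$, hence (dimension count, $2r+2(d-r)=2d$) is bijective. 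Prescribing $(y^{(1)}(a),y^{(2)}(a))\in\ker(\cos B,\sin B)$, so that $y\in\cD_*$, and $(\G_{0b}y,\G_{1b}y)\in(\bC^{d-r})^2$ arbitrarily, then realizes every value $(\G_0\wt y,\G_1\wt y)=(\G_{0b}y,-\G_{1b}y)$, so $\G$ is onto; moreover $\ker\G=S$ follows from \eqref{5.9} and \eqref{5.12.1}. This proves (ii). I expect $(\star)$ to be the main obstacle: it is exactly where the degeneration of $\D$ must be reconciled with the rigidity of solutions of the equation, whereas everything else is the standard bookkeeping for a separated self-adjoint boundary condition at the regular endpoint $a$ together with abstract singular boundary values at $b$.
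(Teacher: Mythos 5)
The paper does not actually prove Proposition \ref{pr5.4.1}: like Theorem \ref{th5.2} and Proposition \ref{pr5.4}, it is recalled from \cite{Mog20} without argument, so there is no in-paper proof to compare against; your proposal has to stand on its own, and in substance it does. The load-bearing ingredient is your claim $(\star)$, which is a (slightly stronger) form of the definiteness of equation \eqref{5.2}; note that the paper itself later invokes exactly this property via \cite[Proposition 5.11]{Mog20} (in the proof of Theorem \ref{th5.12}: $l[y]=t\D(x)y$ and $\D(x)y(x)=0$ a.e.\ imply $y=0$), so you could replace your density-point argument by that citation and shorten the proof considerably. If you do keep the self-contained argument, one sentence in it needs repair: the quasi-derivative recursion is not uniformly of the form ``$(u^{[j]})'$ plus an $L^1_{\rm loc}$ coefficient times a lower-order quasi-derivative''; at the step $j+1=r$ one has $u^{[r]}=p_0\,(u^{[r-1]})'$, equivalently $(u^{[r-1]})'=p_0^{-1}u^{[r]}$, and to conclude $u^{[r]}(x)=0$ from $(u^{[r-1]})'(x)=0$ you must use that $p_0$ is a finite real-valued function with $p_0^{-1}\in L^1_{\rm loc}$, hence $p_0^{-1}\neq 0$ a.e.; this is precisely the point where a genuinely degenerate leading coefficient would break the argument, so it should be said explicitly rather than absorbed into the generic description. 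The remaining steps are correct and standard: existence of a representative from $\dom S^*=\pi_\D\cD_*$ with uniqueness from definiteness (which also gives injectivity of $\pi_\D$ on $\Dma$, using the well-definedness statement of Theorem \ref{th5.2}); Green's identity from the Lagrange identity on $[a,b']$, the existence of $[y,z]_b$, the vanishing of the form at $a$ on the Lagrangian kernel of $(\cos B,\sin B)$, and \eqref{5.12.1}; and surjectivity of $(\G_0,\G_1)^\top$ from the count $\dim\Dma/\Dmi=n_+(\Smi)+n_-(\Smi)=2d$ together with the surjectivity of $\G_b$ and the identification of the kernel of $y\mapsto\bigl(y^{(1)}(a),y^{(2)}(a),\G_{0b}y,\G_{1b}y\bigr)$ with $\Dmi$. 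So: correct, with the caveat about the order-$r$ step, and with an available shortcut through the definiteness result of \cite{Mog20} that the paper itself relies on.
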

In the following with an operator $B=B^*\in\B (\bC^r)$ we associate the $r$-component operator solutions $\f_B(x,\l)=(\f_1(x,\l),\f_2(x,\l), \dots, \f_r(x,\l))(\in \B (\bC^r,\bC))$ and $\psi_B(x,\l)=(\psi_1(x,\l),\psi_2(x,\l), \dots, \psi_r(x,\l))(\in \B (\bC^r,\bC))$ of \eqref{5.2} defined by the initial values
\begin{gather}\label{5.14}
\f_B^{(1)}(a,\l)=\sin B,\;\; \f_B^{(2)}(a,\l)=-\cos B;\;\;\; \psi_B^{(1)}(a,\l)=\cos B,\;\; \psi_B^{(2)}(a,\l)=\sin B.
\end{gather}
It easy to see that for each functions $f\in\lI$ with compact support
the equality
\begin{gather}\label{5.15}
\wh f(t)=\int_\cI \f_B^*(x,t) \D(x)  f(x)\,dx
\end{gather}
defines a continuous functions $\wh f:\bR\to\bC^r$.
\begin{definition}\label{def5.5}
A distribution  $\xi:\bR\to \B(\bC^r)$ is called a spectral function of the equation  \eqref{5.2} if for each function $f\in\lI$ with compact support the  Parseval equality $||\wh f||_{\lS}=||f||_{\D}$ holds.
\end{definition}
If $\xi$ is a spectral function, then for each $f\in\lI$ the integral in \eqref{5.15} converges in $\lS$ to a function $\wh f\in\lS$, which is called the generalized Fourier transform of $f$. Moreover, the equality
\begin{gather}\label{5.18}
V_\xi\wt f =\pi_\xi \wh f, \quad \wt f\in\LI,
\end{gather}
where $\wh f$ is the Fourier transform of a function $f\in\wt f$, defines an isometry $V_\xi\in\B (\LI,$ $\LS)$.

The $m$-component  operator solution $Y(x,\l)$ of \eqref{5.2}  given by  \eqref{5.3} will be referred to the class $\cL_\D^2(\cI;\bC^m)$ if $Y_j(\cd,\l)\in\lI$ for any $j\in\{1,\dots,m\}$. With each such a  solution one associates the operator $Y(\l):\bC^m\to \cN_\l$, given by $(Y(\l)h)(x)=Y(x,\l)h, \; h\in\bC^m$.

A description of all spectral functions of the equation \eqref{5.2} is given by the following theorem.
\begin{theorem}\label{th5.9}$\,$\cite{Mog20}
Let $(\G_{0b}, \G_{1b})^\top$ be the operator from  Proposition \ref{pr5.4.1}. Then:

{\rm (i)} For any $\l\in\CR$ there exists a unique pair of operator solutions $v(\cd,\l)\in\cL_\D^2(\cI;\bC^r)$ and $u(\cd,\l) \in\cL_\D^2(\cI;\bC^{d-r})$ of \eqref{5.2} satisfying the boundary conditions:
\begin{gather*}
\cos B\cd v^{(1)}(a,\l)+ \sin B\cd v^{(2)}(a,\l)=-I_r,\qquad \G_{0b}v(\l)=0,\quad \l\in\CR\\
\cos B\cd u^{(1)}(a,\l)+ \sin B\cd u^{(2)}(a,\l)=0,\qquad \G_{0b}u(\l)=I_{d-r},\quad \l\in\CR
\end{gather*}
Moreover, the equalities
\begin{gather}
M(\l)=\begin{pmatrix} m_0(\l) & M_2(\l)\cr M_3(\l) & M_4(\l)  \end{pmatrix}: \bC^r\oplus\bC^{d-r}\to \bC^r\oplus\bC^{d-r},\quad \l\in\CR\label{5.22}\\
m_0(\l)=\sin B\cd v^{(1)}(a,\l)- \cos B\cd v^{(2)}(a,\l)\nonumber\\
 M_2(\l)=\sin B\cd u^{(1)}(a,\l)- \cos B\cd u^{(2)}(a,\l),\;\;\;
M_3(\l)=-\G_{1b} v(\l), \;\;\; M_4(\l) =-\G_{1b} u(\l)\nonumber
\end{gather}
define a function $M\in R[\bC^r\oplus\bC^{d-r}]$ with $M_4\in R_u[\bC^{d-r}]$.

{\rm (ii)} For any $\tau\in\wt R(\bC^{d-r})$ the equality
\begin{gather}\label{5.23}
m(\l)=m_0(\l)-M_2(\l)(\tau(\l)+M_4(\l))^{-1}M_3(\l),
\quad\l\in\CR
\end{gather}
defines a function $m\in R [\bC^r]$ such that the spectral function  $\xi=\xi_\tau$ of $m$ is a spectral function of the equation \eqref{5.2} and, conversely, for any spectral function $\xi $ of \eqref{5.2} there is a unique $\tau\in\wt R(\bC^{d-r})$ such that $\xi$ is a spectral function of $m$ given by \eqref{5.23}.
\end{theorem}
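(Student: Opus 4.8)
The plan is to reduce the whole statement to the abstract boundary-triplet calculus applied to the triplet $\Pi=\{\bC^{d-r},\G_0,\G_1\}$ for $S^*$ of Proposition \ref{pr5.4.1} --- its Weyl function and $\g$-field, the Krein formula \eqref{2.37} and Theorem \ref{th2.10} --- together with the Naimark--Shtraus correspondence between spectral functions of \eqref{5.2} and exit-space self-adjoint extensions of the symmetric operator $S$ (worked out for nontrivial weights in \cite{Mog20}).

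For part (i), I would first prove existence and uniqueness of $v(\cd,\l)$ and $u(\cd,\l)$ by showing that for $\l\in\CR$ the linear map $\cN_\l\to\bC^r\oplus\bC^{d-r}$, $y\mapsto\bigl(\cos B\cd y^{(1)}(a)+\sin B\cd y^{(2)}(a),\,\G_{0b}y\bigr)$, is a bijection. As $\dim\cN_\l=d=r+(d-r)$, only injectivity has to be checked: if both components vanish then $\wt y:=\pi_\D y\in\dom S^*$ with $S^*\wt y=\l\wt y$ and $\G_0\wt y=\G_{0b}y=0$, so $\wt y\in\ker(A_0-\l)=\{0\}$ for $A_0:=\ker\G_0=A_0^*$ and $\l\notin\bR$; hence $\pi_\D y=0$, and since $\D$ is nontrivial a solution of \eqref{5.2} vanishing on $\{\D>0\}$ vanishes identically, so $y=0$. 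The columns of $v(\cd,\l)$ and $u(\cd,\l)$ are then the preimages of the standard bases of $\bC^r$ and $\bC^{d-r}$; they belong to $\cL_\D^2$ because $\cN_\l\subset\Dma$, and uniqueness is immediate. A short computation with $\cos^2 B+\sin^2 B=I$ gives $v(\cd,\l)=-\psi_B(\cd,\l)+\f_B(\cd,\l)m_0(\l)$, the familiar Weyl solution, so that $m_0$ is the Titchmarsh--Weyl function attached to the boundary condition $\G_{0b}y=0$; and since $u(\l)$ satisfies the condition at $a$ with $\G_{0b}u(\l)=I_{d-r}$, the function $\pi_\D u(\l)$ is the $\g$-field of $\Pi$ at $\l$, whence $M_4(\l)=-\G_{1b}u(\l)=\G_1\pi_\D u(\l)$ is the Weyl function of $\Pi$ and lies in $R_u[\bC^{d-r}]$ by Proposition \ref{pr2.8}. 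Finally, $M^*(\l)=M(\ov\l)$ and $\im\l\cd\im(M(\l)h,h)\ge 0$, $h\in\bC^d$, follow from Lagrange's identity $\int_a^{b'}(l[y]\ov z-y\,\ov{l[z]})\,dx=[y,z]_{b'}-[y,z]_a$ combined with \eqref{5.12.1} and the initial data \eqref{5.14}, evaluated on the columns of $v$ and $u$; holomorphy of $M$ on $\CR$ is standard (analytic dependence of solutions of \eqref{5.2} on $\l$ and boundedness of $(A_0-\l)^{-1}$). Hence $M\in R[\bC^r\oplus\bC^{d-r}]$.

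For part (ii), formula \eqref{5.23} is the Krein formula read on the boundary data at $a$. Since $M_4\in R_u[\bC^{d-r}]$ and $\tau\in\wt R(\bC^{d-r})$, the relation $\tau(\l)+M_4(\l)$ has a bounded inverse, so \eqref{5.23} is meaningful. For $h\in\bC^r$ put $g=-(\tau(\l)+M_4(\l))^{-1}M_3(\l)h$; then $M(\l)(h\oplus g)=(m(\l)h)\oplus(-\eta)$ with $\{g,\eta\}\in\tau(\l)$, so taking imaginary parts $\im\l\cd\im(m(\l)h,h)=\im\l\cd\im(M(\l)(h\oplus g),h\oplus g)+\im\l\cd\im(\eta,g)\ge 0$, and together with $m^*(\l)=m(\ov\l)$ (from $M^*(\l)=M(\ov\l)$ and $M_3(\ov\l)=M_2^*(\l)$) and holomorphy this yields $m\in R[\bC^r]$. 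Next, the generalized Weyl solution $w(\cd,\l):=v(\cd,\l)-u(\cd,\l)(\tau(\l)+M_4(\l))^{-1}M_3(\l)$ satisfies $w(\cd,\l)=-\psi_B(\cd,\l)+\f_B(\cd,\l)m(\l)$ and realises the boundary condition at $b$ defining $\wt A_\tau$ (i.e. $\{\G_{0b}w,\G_{1b}w\}$ lies columnwise in $\tau(\l)$); hence the integral operator whose kernel is assembled from $\f_B(\cd,\l)$ and $w(\cd,\l)$ coincides with the compressed resolvent $R_\l^\tau:=P_\gH(\wt A_\tau-\l)^{-1}\up\gH$ of $S$ furnished by \eqref{2.37} with parameter $\tau$. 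Applying the transform \eqref{5.15} to this identity and using the Stieltjes inversion formula for $\l\mapsto(R_\l^\tau f,f)_\D$ gives the Parseval equality $||\wh f||_{\lS}=||f||_\D$ of Definition \ref{def5.5} for compactly supported $f\in\lI$ with $\xi=\xi_\tau$; thus $\xi_\tau$ is a spectral function of \eqref{5.2}. Conversely, a spectral function $\xi$ of \eqref{5.2} is, by the correspondence cited above, realised by an exit-space self-adjoint extension $\wt A\supset S$, hence by Theorem \ref{th2.10}, (ii) by a unique $\tau\in\wt R(\bC^{d-r})$ with $\wt A=\wt A_\tau$; and by the computation just made $\xi=\xi_\tau$ with $m$ as in \eqref{5.23}, the uniqueness of $\tau$ being that of Theorem \ref{th2.10}, (ii).

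The step I expect to be the main obstacle is the systematic handling of the degeneracy of $\D$. Neither the boundary data $y^{(1)}(a),y^{(2)}(a)$ at the regular endpoint nor the singular boundary values $\G_{jb}y$ are functions of the $\LI$-class $\pi_\D y$, so the classical boundary-triplet description of $\Sma$ built from those data is not available; one must work throughout with the distinguished representative of $\wt y\in\dom S^*$ supplied by Proposition \ref{pr5.4.1}, (i) and keep $\ker\pi_\D$ under control. This is exactly why $m_0$ need only belong to $R[\bC^r]$ rather than to $R_u[\bC^r]$, and why the equivalence ``spectral function of \eqref{5.2} $\leftrightarrow$ generalized resolvent of $S$'' is not automatic. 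Once that dictionary is secured, the remaining ingredients --- Lagrange's identity, the Schur-complement identity behind \eqref{5.23}, and the matching of the concrete Green's function with \eqref{2.37} --- are routine.
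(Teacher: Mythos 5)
The paper never proves this statement: Theorem \ref{th5.9} is imported verbatim from \cite{Mog20}, so there is no in-paper argument to measure your proposal against, only the route that \cite{Mog20} presumably takes (and that this paper relies on elsewhere, e.g.\ via Theorem \ref{th5.10}). With that caveat, your outline of part (i) and of the direct half of part (ii) is the expected one and is sound in its skeleton: injectivity of $y\mapsto(\cos B\,y^{(1)}(a)+\sin B\,y^{(2)}(a),\G_{0b}y)$ on $\cN_\l$ via $A_0=\ker\G_0$, the identification $u(\cd,\l)=\f_B(\cd,\l)M_2(\l)$, $v(\cd,\l)=-\psi_B(\cd,\l)+\f_B(\cd,\l)m_0(\l)$, the recognition of $M_4$ as the Weyl function of the triplet of Proposition \ref{pr5.4.1} (hence $M_4\in R_u$ by Proposition \ref{pr2.8}), the Schur-complement/Krein-formula reading of \eqref{5.23}, and Stieltjes inversion applied to $\l\mapsto(P_\gH(\wt A_\tau-\l)^{-1}f,f)_\D$ to obtain the Parseval identity. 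These checks (including your algebra showing $\im\l\cdot\im(m(\l)h,h)\geq 0$) are correct.

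Two points, however, are genuine gaps rather than routine omissions. First, you repeatedly use definiteness of equation \eqref{5.2} under the sole hypothesis that $\D$ is nontrivial -- for injectivity in (i), and for the bijection $\cN_\l\ni y\mapsto\pi_\D y\in\gN_\l(S^*)$ behind $M_4=$ Weyl function -- and you dismiss it with ``a solution vanishing on $\{\D>0\}$ vanishes identically.'' That is not immediate: such a $y$ solves $l[y]=0$ and vanishes only on a set of positive measure, and one needs the quasi-derivative uniqueness argument (all $y^{[j]}$ vanish at a density point, etc.). This is exactly \cite[Proposition 5.11]{Mog20}, which the present paper quotes separately in the proof of Theorem \ref{th5.12}; it must be proved or cited, not asserted. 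Second, and more seriously, your converse-and-uniqueness step in (ii) is circular: you invoke ``the Naimark--Shtraus correspondence between spectral functions of \eqref{5.2} and exit-space self-adjoint extensions of $S$, worked out for nontrivial weights in \cite{Mog20}'' -- but that correspondence \emph{is} the content of part (ii). A self-contained argument must start from an arbitrary spectral function $\xi$ in the sense of Definition \ref{def5.5}, build the isometry $V_\xi$ of \eqref{5.18}, prove the intertwining relation between $S$ (through representatives modulo $\ker\pi_\D$, which is where the degenerating weight actually bites) and the multiplication operator $\L_\xi$, conclude that $V_\xi^*(\L_\xi-\l)^{-1}V_\xi$ is the compressed resolvent of an exit-space self-adjoint extension of $S$, and only then apply Theorem \ref{th2.10}, (ii) together with \eqref{2.37} to extract a unique $\tau\in\wt R(\bC^{d-r})$ and match \eqref{5.23}. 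Likewise, the identification of the explicit kernel built from $\f_B(\cd,\l)$ and $w(\cd,\l)=-\psi_B(\cd,\l)+\f_B(\cd,\l)m(\l)$ with $P_\gH(\wt A_\tau-\l)^{-1}\up\LI$ is stated as ``routine'' but is precisely the computation that must be carried out at the level of $\D$-equivalence classes; without these steps the proposal is an accurate roadmap of \cite{Mog20} rather than a proof.
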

\begin{theorem}\label{th5.10}$\,$\cite{Mog20}
Let under the assumptions of Proposition \ref{pr5.4.1} $\Pi=\{\bC^{d-r},\G_0,\G_1\}$ be the boundary triplet \eqref{5.13} for $S^*$. Assume also that $\tau\in\wt R(\bC^{d-r})$, $\wt S_\tau=\wt S_\tau^*\in \cC (\wt\gH ) \; (\wt\gH\supset \LI)$ is the corresponding exit space extension of $S$, $\xi=\xi_\tau$ is the spectral function of the equation \eqref{5.2} (see Theorem \ref{th5.9}, {\rm (ii)}) and $\L_\xi$ is the multiplication operator in $\LS$. Then there exists a unitary operator $\wt V\in \B (\wt\gH, \LS)$ such that $\wt V\up \LI=V_\xi$ and the operators $\wt S_\tau $  and $\L_\xi$ are unitarily equivalent by means of $\wt V$.
\end{theorem}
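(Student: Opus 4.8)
The plan is to realize $\wt V$ as an extension of the generalized Fourier transform $V_\xi$ built by spreading $V_\xi$ along the spectral measure of $\wt S_\tau$, reducing the whole statement to one resolvent identity. Let $E(\cd)$ be the orthogonal spectral measure of $\wt S_\tau$ in $\wt\gH$, and recall that $E_\xi(\cd)$ is the spectral measure of $\L_\xi$, given on Borel sets by \eqref{5.0}. Since $\wt S_\tau$ is an exit space extension of $S$, it has no nontrivial reducing subspace inside $\wt\gH\ominus\LI$; the closed linear span $\cL$ of $\{E(\d)\wt f:\d\subset\bR\ \text{Borel},\ \wt f\in\LI\}$ reduces $\wt S_\tau$ and contains $\LI$, so $\wt\gH\ominus\cL\subseteq\wt\gH\ominus\LI$ reduces $\wt S_\tau$ and is therefore trivial; that is, $\{E(\d)\wt f\}$ is total in $\wt\gH$. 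I would then set, for each Borel $\d\subset\bR$, each $\wt f\in\LI$, and each representative $f\in\wt f$,
\begin{gather}\label{pl1}
\wt V\,E(\d)\wt f:=E_\xi(\d)\,V_\xi\wt f=\pi_\xi(\chi_\d\,\wh f),
\end{gather}
and extend by linearity and continuity. Taking $\d=\bR$ already gives $\wt V\up\LI=V_\xi$, while \eqref{pl1} reads $\wt V E(\d')=E_\xi(\d')\wt V$ on a total set, which -- once $\wt V$ is known to be a well-defined unitary -- yields $\wt V(\wt S_\tau-\l)^{-1}=(\L_\xi-\l)^{-1}\wt V$, i.e.\ $\wt V\wt S_\tau\wt V^{-1}=\L_\xi$.

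Everything hinges on the operator identity
\begin{gather}\label{pl2}
P_{\LI}\,E(\d)\up\LI=V_\xi^*\,E_\xi(\d)\,V_\xi,\qquad \d\subset\bR\ \text{Borel}.
\end{gather}
Indeed, for finite sums $h=\sum_jc_jE(\d_j)\wt f_j$ one expands $||\wt V h||_{\LS}^2$ and $||h||_{\wt\gH}^2$ by multiplicativity and orthogonality of the two spectral measures; both reduce to $\sum_{j,k}c_j\ov{c_k}\,(\wt f_j,P_{\LI}E(\d_j\cap\d_k)\wt f_k)_{\LI}$ precisely because of \eqref{pl2} and $\wt f_j\in\LI$, so \eqref{pl1} is consistent and isometric on the total set. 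Passing to Stieltjes transforms, \eqref{pl2} is equivalent to
\begin{gather}\label{pl3}
P_{\LI}(\wt S_\tau-\l)^{-1}\up\LI=V_\xi^*(\L_\xi-\l)^{-1}V_\xi,\qquad \l\in\CR.
\end{gather}
The left side is the generalized resolvent of $S$ generated by $\wt S_\tau$: by the Krein formula \eqref{2.37} in the boundary triplet $\Pi=\{\bC^{d-r},\G_0,\G_1\}$ of Proposition \ref{pr5.4.1}, whose Weyl function is $M_4$ (the lower right block of the matrix $M$ in \eqref{5.22}), it equals $(S_0-\l)^{-1}-\g(\l)(\tau(\l)+M_4(\l))^{-1}\g^*(\ov\l)$ with $S_0:=\ker\G_0$; hence it is an integral operator in $\LI$ whose Green's function is assembled from the operator solutions $v(\cd,\l)$, $u(\cd,\l)$ of Theorem \ref{th5.9} and from $(\tau(\l)+M_4(\l))^{-1}$, its $\bC^r$-component being controlled by the Nevanlinna function $m(\l)$ of \eqref{5.23}. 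On the right side $V_\xi^*(\L_\xi-\l)^{-1}V_\xi\wt f=V_\xi^*\pi_\xi\!\bigl(\tfrac1{t-\l}\,\wh f(t)\bigr)$; inserting \eqref{5.15} for $\wh f$, the representation \eqref{2.9} of $m$ with spectral function $\xi$, and the formula for the inverse transform $V_\xi^*$, then interchanging the order of integration (legitimate first for compactly supported $f$ and then by density), one obtains the same integral operator. This matching -- the passage between the parameter $\tau$, the $m$-function \eqref{5.23}, and the matrix spectral function $\xi=\xi_\tau$ -- is the Titchmarsh--Kodaira computation in the present framework, and is exactly what the results of \cite{Mog20} recalled in Theorem \ref{th5.9} provide.

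Granting \eqref{pl2}--\eqref{pl3}, the map $\wt V$ in \eqref{pl1} is a well-defined isometry of $\wt\gH$ into $\LS$ with $\wt V\up\LI=V_\xi$ and $\wt V E(\d')=E_\xi(\d')\wt V$. Its range is the closed span of $\{E_\xi(\d)V_\xi\wt f\}$, a subspace of $\LS$ that reduces $\L_\xi$ and contains $\ran V_\xi$; that it is all of $\LS$ -- equivalently, that $(\LS,\L_\xi)$ is the \emph{minimal} dilation of the generalized spectral family $\d\mapsto P_{\LI}E(\d)\up\LI$ -- is built into the construction of $\xi_\tau$ in \cite{Mog20}, and also follows from \eqref{pl2} together with uniqueness of the minimal dilation and the exit space (hence minimal) nature of $\wt S_\tau$. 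Thus $\wt V$ is unitary, it intertwines $E(\cd)$ with $E_\xi(\cd)$ and therefore $\wt S_\tau$ with $\L_\xi$, and it restricts to $V_\xi$ on $\LI$; this is the assertion.

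The single genuine obstacle is the resolvent identity \eqref{pl3}: one must verify that the Green's function of the generalized resolvent of $\wt S_\tau$ is reproduced by the inverse generalized Fourier transform applied to multiplication by $(t-\l)^{-1}$. Here the degeneracy of the weight must be handled with care -- on intervals where $\D$ vanishes $\LI$ retains no information about $y$, so $V_\xi$ is isometric only after passing to $\D$-classes, and the interchanges of integrals and the Stieltjes inversion should be carried out in $\lI$ and $\lS$ before taking quotients -- and the precise form of \eqref{5.23} linking $\tau$, $M_4$, and $m$ enters in an essential way. Once \eqref{pl3} is established, the remaining steps are the routine dilation- and spectral-measure bookkeeping indicated above.
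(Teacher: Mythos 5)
This theorem is not proved in the paper at all: it is imported verbatim from \cite{Mog20}, so there is no internal proof to compare with and your text has to stand on its own as a reconstruction. The dilation-theoretic frame you set up is sound: defining $\wt V$ on elements $E(\d)\wt f$, $\wt f\in\LI$, the totality of such elements in $\wt\gH$ from the exit-space (minimality) property, the equivalence of well-definedness and isometry with your identity (pl2), and the passage between (pl2) and the resolvent identity (pl3) by Stieltjes inversion are all correct routine steps.

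The genuine gap is that (pl3) -- which you yourself call the single obstacle -- is precisely the substance of the theorem, and you do not establish it. You claim it is ``exactly what the results recalled in Theorem \ref{th5.9} provide,'' but Theorem \ref{th5.9} asserts only that $\xi_\tau$, the spectral function of the $m$ of \eqref{5.23}, is a spectral function of the equation in the sense of Definition \ref{def5.5}, i.e.\ that $V_{\xi_\tau}$ satisfies the Parseval identity and hence is an isometry. Every spectral function of \eqref{5.2} gives an isometric $V_\xi$, so the Parseval property alone cannot single out which exit space extension a given $\xi_\tau$ belongs to; the identification $P_{\LI}(\wt S_\tau-\l)^{-1}\up\LI=V_{\xi_\tau}^*(\L_{\xi_\tau}-\l)^{-1}V_{\xi_\tau}$ requires matching the Krein formula \eqref{2.37} (with Weyl function $M_4$ and parameter $\tau$) against the Stieltjes inversion of the compressed function $m$ in \eqref{5.23} through the kernels $\f_B$, $v(\cd,\l)$, $u(\cd,\l)$ -- the Titchmarsh--Kodaira computation you sketch in words but never carry out, and which is exactly what the proof in \cite{Mog20} consists of. Without it the argument is circular. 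A second, smaller gap is surjectivity of $\wt V$: that the truncated transforms $\pi_\xi(\chi_\d\wh f)$ are total in $\LS$ (equivalently, that $\ran V_\xi$ is generating for $\L_\xi$) does not follow from (pl2) together with minimality on the $\wt\gH$ side, since an isometric, intertwining $\wt V$ could map onto a proper reducing subspace of $\LS$; this generating property of $\xi_\tau$ is an independent fact that you assert (``built into the construction'') rather than prove.
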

\subsection{Eigenfunction expansions: calculation of eigenfunctions and uniform convergence}\label{sub5.3}
Below we  suppose that for equation \eqref{5.2} the following assumptions hold:

(A1) $\Dma\subset\lI$ is the linear manifold \eqref{5.5} and $(\G_{0b},\G_{1b})^\top:\Dma\to (\bC^{d-r})^2$ is a surjective operator satisfying \eqref{5.12.1}.

(A2) $B=B^*\in\B (\bC^r)$ and $C=(C_0,C_1)$ is an entire Nevanlinna pair  in $\bC^{d-r}$.

Consider the eigenvalue problem
\begin{gather}
l[y]=t\D(x)y\label{5.30}\\
(\cos B) y^{(1)}(a)+(\sin B) y^{(2)}(a)=0,\qquad  C_0(t)\G_{0b}y+C_1(t)\G_{1b}y=0\label{5.31}
\end{gather}
with separated boundary conditions \eqref{5.31} depending on the parameter $t\in\bR$.  The set of all solutions of the problem \eqref{5.30}, \eqref{5.31} for a given $t\in\bR$ (i.e., the set of all $y\in\cN_t$ satisfying \eqref{5.31}) will be denoted by $\gN_t$. Clearly, $\gN_t$ is a finite dimensional subspace in  $\lI$.
\begin{definition}\label{def5.11}
A point $t\in\bR$ such that $\gN_t\neq \{0\}$ is called an eigenvalue of the problem \eqref{5.30}, \eqref{5.31}.  The subspace $\gN_t (\subset \lI)$ for an eigenvalue  $t$ is called an eigenspace and a function $y\in\gN_t$ is called an eigenfunction of the problem \eqref{5.30}, \eqref{5.31}.
\end{definition}
\begin{theorem}\label{th5.12}
Assume that for equation \eqref{5.2} with the nontrivial weight $\D$ the operator $\Smi$ has the discrete spectrum. Moreover let the assumptions {\rm (A1)} and {\rm (A2)} at the beginning of the subsection hold, let $\gN_t\; (t\in\bR)$ be the set of all  solutions of the eigenvalue problem \eqref{5.30}, \eqref{5.31} and let $EV$ be the set of all eigenvalues of the same problem. Then:

{\rm (i)} $  EV$ is an infinite countable subset of $\bR$ without finite limit  points, so that it can be written as an increasing infinite sequence $  EV=\{t_k\}_{\nu_-}^{\nu_+}, \; t_k\in\bR$;

{\rm (ii)}  for any $y\in\lI$ there exists a sequence $\{y_k\}_{\nu_-}^{\nu_+}$ of eigenfunctions $y_k\in\gN_{t_k} \; (t_k\in EV)$ of the problem \eqref{5.30}, \eqref{5.31} such that the following eigenfunction expansion holds:
\begin{gather}\label{5.33}
y(x)=\sum_{k=\nu_-}^{\nu_+}y_k(x).
\end{gather}
The series in \eqref{5.33} converges in $\lI$, that is
\begin{gather}\label{5.34}
\lim_{{\nu_1\to \nu_-}\atop{\nu_2\to \nu_+}} \left|\left|y- \sum_{k=\nu_1}^{\nu_2}y_k \right|\right|_\D=0.
\end{gather}

{\rm (iii)} Let $\tau=\tau_C\in\RM (\bC^{d-r})$ be given by \eqref{2.20}. Then  the function $y_k$ in \eqref{5.33} can be defined as a unique function in $\gN_{t_k}$ such that
\begin{gather}\label{5.35}
\pi_\D y_k=P E(\{t_k\})\pi_\D y.
\end{gather}
Here $E(\cd)$ is the orthogonal spectral measure of the exit space extension $\wt S_\tau=\wt S_\tau^*\in \cC (\wt\gH)$ of $S$ $\;(\wt\gH\supset\LI)$ corresponding to $\tau$ in the boundary triplet $\Pi=\{\bC^{d-r}, \G_0,\G_1\}$ for $S^*$  (see Propositions \ref{pr5.4} and \ref{pr5.4.1}) and $P$  is the orthoprojection in $\wt\gH$ onto $\LI$. Moreover, $\wt S_\tau\in {\rm Self}_d (\wt\gH)$ and $  EV=\s(\wt S_\tau)(=\s_p(\wt S_\tau))$.
\end{theorem}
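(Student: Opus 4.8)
The plan is to reduce everything to the abstract machinery already built in Section 4, applied to the closed symmetric operator $S$ of Proposition \ref{pr5.4} and the boundary triplet $\Pi=\{\bC^{d-r},\G_0,\G_1\}$ of Proposition \ref{pr5.4.1}. First I would observe that $\Smi$ having discrete spectrum forces $S$ to have discrete spectrum: since $\Smi\subset S$ and $\dim(S/\Smi)\leq 2d<\infty$, Lemma \ref{lem4.5}(ii) applied to $\Smi-t$ and $S-t$ gives $S\in {\rm Sym}_d(\LI)$. Also $S$ is densely defined (as $\Smi$ is and $\dom\Smi\subset\dom S$), $n_\pm(S)=d-r<\infty$, and $\dim\LI=\infty$ because $\Smi$ has infinitely many eigenvalues. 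Thus the hypotheses of Theorem \ref{th4.10} are met with $A=S$, $\cH=\bC^{d-r}$, and $\tau=\tau_C\in\RM(\bC^{d-r})$ (the last membership by Proposition \ref{pr2.6.8}).

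Next I would identify the differential eigenvalue problem \eqref{5.30}, \eqref{5.31} with the abstract eigenvalue problem \eqref{4.3}, \eqref{4.4.1}. By Proposition \ref{pr5.4.1}(i), each $\wt y\in\dom S^*$ comes from a unique $y\in\cD_*$, and under the triplet \eqref{5.13} the abstract boundary condition \eqref{4.4.1}, namely $C_0(t)\G_0\wt y-C_1(t)\G_1\wt y=0$, becomes $C_0(t)\G_{0b}y+C_1(t)\G_{1b}y=0$; the condition $S^*\wt y=t\wt y$ is exactly $l[y]=t\D(x)y$ together with the endpoint condition at $a$ built into $\cD_*$ via \eqref{5.10}. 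Hence $\pi_\D$ maps $\gN_t$ bijectively onto $\nt$ (the abstract solution set), so $EV=\wt{EV}$ and $\dim\gN_t=\dim\nt<\infty$. Then statement (i) is precisely Theorem \ref{th4.10}(i).

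For (ii) and (iii), I would apply Theorem \ref{th4.10}(ii): there is a Hilbert space $\wt\gH\supset\LI$ with $\wt S_\tau\in{\rm Self}_d(\wt\gH)$, $\wt{EV}=\s(\wt S_\tau)=\s_p(\wt S_\tau)$, and for $\wt y=\pi_\D y\in\LI$ the expansion $\wt y=\sum_k \wt f_k$ holds with $\wt f_k=P E(\{t_k\})\wt y\in\nt$, convergence in the norm of $\LI$. Pulling back through $\pi_\D$: define $y_k$ to be the unique element of $\gN_{t_k}$ with $\pi_\D y_k=\wt f_k$ — uniqueness holds because $\pi_\D\up\gN_{t_k}$ is injective (a nonzero element of $\ker\pi_\D$ is $\D$-a.e. zero, but an eigenfunction, being a nontrivial solution of \eqref{5.2}, cannot vanish on a set of positive $\D$-measure without being the zero class). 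Since $\|y-\sum_{k=\nu_1}^{\nu_2}y_k\|_\D=\|\wt y-\sum_{k=\nu_1}^{\nu_2}\wt f_k\|_{\LI}\to 0$, we obtain \eqref{5.33}, \eqref{5.34}, and \eqref{5.35}; the remaining assertions ($\wt S_\tau\in{\rm Self}_d(\wt\gH)$, $EV=\s(\wt S_\tau)$) are carried over verbatim.

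The only genuine subtlety — and what I expect to be the main point to handle carefully — is the passage between the function level (the semi-Hilbert space $\lI$, where eigenfunctions live) and the operator level (the Hilbert space $\LI=\lI/\ker\pi_\D$, where the abstract theory operates): one must check that $\pi_\D$ restricts to a bijection $\gN_t\to\nt$, that $P E(\{t_k\})\pi_\D y$ indeed lands in $\pi_\D\gN_{t_k}$ (which is the first equality in \eqref{4.5} combined with the identification of boundary conditions), and that the chosen representatives $y_k$ are genuinely solutions of \eqref{5.30}, \eqref{5.31} and not merely $\D$-equivalent to such. All of this is routine once the dictionary between \eqref{5.30}, \eqref{5.31} and \eqref{4.3}, \eqref{4.4.1} is set up via Propositions \ref{pr5.4}, \ref{pr5.4.1} and Remark \ref{rem4.3}; the nontriviality of $\D$ (Definition \ref{def1.0}) is what guarantees injectivity of $\pi_\D$ on spaces of solutions and hence the well-definedness of the $y_k$.
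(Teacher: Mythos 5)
Your route is the same as the paper's: apply the abstract Theorem \ref{th4.10} to the operator $S$ of Proposition \ref{pr5.4} with the boundary triplet of Proposition \ref{pr5.4.1} and $\tau=\tau_C$, identify the problem \eqref{5.30}, \eqref{5.31} with the abstract problem \eqref{4.3}, \eqref{4.4.1} by means of $\pi_\D$, and pull the spectral expansion back through $\pi_\D$; your reduction $S\in{\rm Sym}_d(\LI)$ via Lemma \ref{lem4.5} is exactly Proposition \ref{pr4.7}(ii). The skeleton is right, but two of your justifications need repair.

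First, your reason for $\dim\LI=\infty$ is false: $\Smi$ has no eigenvalues at all, since an eigenvector of $\Smi$ would come from a representative $y\in\Dmi$ solving $l[y]=t\D y$ with $y^{(1)}(a)=y^{(2)}(a)=0$, whence $y=0$ by uniqueness for the Cauchy problem. The fact you need is nonetheless true and elementary: the nontrivial weight makes $\D(x)\,dx$ a nonzero absolutely continuous (hence nonatomic) measure, so the set $\{\D>0\}$ splits into infinitely many Borel sets of positive measure and $\LI$ is infinite dimensional; that is how the hypothesis $\dim\gH=\infty$ of Theorem \ref{th4.10} should be verified.

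Second, the injectivity of $\pi_\D\up\gN_t$ is the one genuinely equation-specific point, and your parenthetical does not establish it: as written it is circular, because ``vanishing on a set of positive $\D$-measure'' in the relevant sense (i.e. $\D y=0$ a.e.) is the same thing as being the zero class. What is actually required is the definiteness of equation \eqref{5.2}: no nontrivial solution of $l[y]=t\D y$ satisfies $\D(x)y(x)=0$ a.e. For a degenerating weight this is not immediate --- one must exclude a nonzero solution of $l[y]=0$ vanishing a.e. on $\{\D>0\}$, a set of positive measure which may have empty interior, and for quasi-derivatives of order $2r$ this takes an argument. The paper does not prove it here either; it invokes \cite[Proposition 5.11]{Mog20}. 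With that citation (or a proof of definiteness) supplied and the first point corrected, your argument coincides with the paper's proof.
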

\begin{proof}
Together with \eqref{5.30}, \eqref{5.31} consider the abstract eigenvalue problem
\begin{gather}\label{5.37}
S^* \wt y=t\wt y, \qquad C_0(t)\G_0 \wt y-C_1(t)\G_{1}\wt y=0.
\end{gather}
Denote by $\wt\gN_t \; (t\in\bR)$ the set of all solutions of the problem \eqref{5.37} and by $\wt {EV}$ the set of all eigenvalues of the same problem. In view of Theorem \ref{th4.10} the following assertions  are valid:

(a1) $\wt S_\tau\in {\rm Self}_d (\wt\gH)$ and $\wt {EV}=\s(\wt S_\tau)(=\s_p(\wt S_\tau))$;

(a2) $\wt {EV}=\{t_k\}_{\nu_-}^{\nu_+}$ is an  infinite set without finite limit  points and for any $\wt y\in\LI$ there exists a sequence $\{\wt y_k\}_{\nu_-}^{\nu_+}, \; \wt y_k\in \wt\gN_{t_k}\; (t_k\in \wt {EV}) $ such that
\begin{gather}\label{5.38}
\wt y=\sum_{k=\nu_-}^{\nu_+} \wt y_k
\end{gather}
(the series converges in $\LI$). Moreover, $\wt y_k$ in \eqref{5.38} can be defined via
\begin{gather}\label{5.39}
\wt y_k=P E(\{t_k\})\wt y.
\end{gather}
By using \eqref{5.11} and \eqref{5.13} one can easily verify that $\pi_\D \gN_t=\wt \gN_t,\; t\in\bR$. Moreover, according to \cite[Proposition 5.11]{Mog20} the equation \eqref{5.2} is definite, that is the equalities $l[y]=t \D(x) y(x)$ and $\D(x)y(x)=0$ (a.e. on $\cI$) yields $y=0$. Hence $\ker (\pi_\D\up \gN_t)=\{0\}$, so that $\pi_\D\up \gN_t \; (t\in\bR)$ is an isomorphism of $\gN_t$ onto $\nt$.  Therefore $  EV=\wt {EV}$, which in view of (a1) and (a2) yields statement (i) and the equality $  EV=\s(\wt S_\tau)$.

Next assume that  $y\in\lI$. Then by assertion (a2) $\wt y:=\pi_\D y $ admits the representation \eqref{5.38} with $\wt y_k\in\wt\gN_{t_k}$ given by \eqref{5.39}. As was shown, there exists a unique  $y_k\in\gN_{t_k}$ such that $\pi_\D y_k=\wt y_k$, that is \eqref{5.35} holds. Moreover, \eqref{5.38} yields \eqref{5.34}. This proves statements (ii) and (iii).
\end{proof}
Our next goal is to provide an explicit method for calculation of eigenfunctions $y_k$ in the expansion  \eqref{5.33}. To this end we need the following definition.
\begin{definition}\label{def5.14}
A discrete distribution $\xi:\bR\to \B(\bC^r)$ which is a spectral function of the equation \eqref{5.2} in the sense of Definition \ref{def5.5} is called a discrete spectral function of this equation.
\end{definition}
If $\xi=\{F,\Xi\}$ is a discrete spectral function of \eqref{5.2} with $F=\{t_k\}_{\nu_-}^{\nu_+} \in\cF$ and $\Xi=\{\xi_k\}_{\nu_-}^{\nu_+},\; 0\leq\xi_k\in\B (\bC^r),\; \xi_k\neq 0$,  then  a function $g\in\lS$ can be identified with a sequence $g=\{g_k\}_{\nu_-}^{\nu_+} \; (g_k=g(t_k)\in\bC^r)$ such that  $\sum\limits_{k=\nu_-}^{\nu_+}(\xi_k g_k,g_k)\leq\infty$.

Assume  that $\xi=\{F,\Xi\}$ is a discrete spectral function of \eqref{5.2} with $F=\{t_k\}_{\nu_-}^{\nu_+}$ and $\Xi=\{\xi_k\}_{\nu_-}^{\nu_+}$. Then  the generalized Fourier transform $\wh f$ of a function $f\in\lI$ with compact support (see \eqref{5.15})  can be identified with a sequence $\wh f= \{\wh f_k\}_{\nu_-}^{\nu_+}$, $\wh f_k=\wh f(t_k)\in\bC^r$, given by
\begin{gather}\label{5.41}
\wh f_k=\int_\cI\f_B^*(x,t_k)\D(x)f(x)\, dx, \quad t_k\in F.
\end{gather}
(here the integral exists as the Lebesgue integral). The sequence $\wh f=\{\wh f_k\}_{\nu_-}^{\nu_+} $ satisfies the Parseval equality
\begin{gather*}
(||f||^2_\D=)\int_\cI\D(x)|f(x)|^2\, dx=\sum_{k=\nu_-}^{\nu_+}(\xi_k\wh f_k,\wh f_k)\,\left(=||\wh f||_{\lS}^2\right)
\end{gather*}
For an arbitrary function $f\in\lI$ its Fourier transform is a sequence $\wh f= \{\wh f_k\}_{\nu_-}^{\nu_+}\in\lS$ with $\wh f_k=\wh f(t_k)\in\bC^r$, such that
\begin{gather*}
\lim_{b'\to b} \sum_{k=\nu_-}^{\nu_+}(\xi_k(\wh f_k - \smallint_{[a,b']} \f_B^*(x,t_k)\D(x)f(x)\,dx),\wh f_k- \smallint_{[a,b']} \f_B^*(x,t_k)\D(x)\,f(x))=0
\end{gather*}
The last equality will be written in the form  of the integral \eqref{5.41} as well (one says that this integral converges in $\lS$). The vectors $\wh f_k\in\bC^r$ (see \eqref{5.41}) will be called the Fourier coefficients of a function $f\in\lI$ (with respect to the spectral function $\xi$).

For a discrete spectral function $\xi=\{F,\Xi\}$ with $F=\{t_k\}_{\nu_-}^{\nu_+}$ and $\Xi=\{\xi_k\}_{\nu_-}^{\nu_+}$ the isometry $V_\xi\in \B (\LI,\LS)$ is defined by the same formula \eqref{5.18}, but now $\wh f=\{\wh f_k\}_{\nu_-}^{\nu_+}$ is a sequence \eqref{5.41}. Moreover, \cite[Proposition 3.8]{Mog20} implies that
\begin{gather}\label{5.42}
V_\xi^*\wt g=\pi_\D \left(\sum_{k=\nu_-}^{\nu_+} \f_B(\cd,t_k)\xi_k g_k\right), \quad \wt g\in\LS,\;\; g=\{g_k\}_{\nu_-}^{\nu_+}\in\wt g,
\end{gather}
where the series converges in $\lI$.

In the following theorem we specify explicit formulas for calculations of  eigenfunctions  $y_k$ in the  expansion \eqref{5.33} of $y$.
\begin{theorem}\label{th5.17}
Let the assumptions be the same as in Theorem \ref{th5.12}. Moreover, let $M\in R[\bC^r\oplus\bC^{d-r}]$ be the operator function \eqref{5.22}. Then:

{\rm (i)} The equality
\begin{gather}\label{5.43}
m(\l)=m_0(\l)+M_2(\l)(C_0(\l)- C_1(\l)M_4(\l))^{-1}C_1 (\l)M_3(\l),
\quad\l\in\CR
\end{gather}
defines a function $m\in \Rm [\bC^r]$ and according to Assertion \ref{ass2.6.2} the (discrete) spectral function $\xi$ of $m$ is $\xi=\{F_{m},\, \Xi\}$, where $F_{m}=\{t_k\}_{\nu_-}^{\nu_+}$ is the set of all poles of $m$ and $\Xi=\{\xi_k\}_{\nu_-}^{\nu_+}$ with $\xi_k\in \B(\bC^r)$ given by $\xi_k=-\underset{t_k}{\rm res} \,m$. Moreover, $\xi $ is a discrete spectral function of the equation \eqref{5.2}.

{\rm (ii)} $  EV=F_{m}=\{t_k\}_{\nu_-}^{\nu_+}$ and an eigenfunction $y_k$ in the expansion \eqref{5.33} of $y\in\lI$ admits the representation
\begin{gather}\label{5.44}
y_k(x)=\f_B(x,t_k) \xi_k \wh y_k,
\end{gather}
where $\wh y_k$ is the Fourier coefficient \eqref{5.41} of $y$ (with respect to the spectral function $\xi=\xi$).
\end{theorem}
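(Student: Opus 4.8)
The plan is to recognize the function $m$ in \eqref{5.43} as the particular instance $\tau=\tau_C$ of the parametrization \eqref{5.23} from Theorem \ref{th5.9}, and then to transport the abstract description \eqref{5.35} of the eigenfunctions through the unitary equivalence of Theorem \ref{th5.10}.

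The first and main step is the algebraic identity relating \eqref{5.43} to \eqref{5.23}. With $\tau=\tau_C$ given by \eqref{2.20}, linear-relation arithmetic gives, for $\l\in\CR$,
\[
\tau_C(\l)+M_4(\l)=\{\{h,g\}\in\bC^{d-r}\oplus\bC^{d-r}:(C_0(\l)-C_1(\l)M_4(\l))h+C_1(\l)g=0\}.
\]
Since $M_4\in R_u[\bC^{d-r}]$ and $C=(C_0,C_1)$ is a Nevanlinna pair, the operator $C_0(\l)-C_1(\l)M_4(\l)$ is injective, hence (finite dimension) boundedly invertible, for $\l\in\CR$; therefore $(\tau_C(\l)+M_4(\l))^{-1}$ is the bounded operator $-(C_0(\l)-C_1(\l)M_4(\l))^{-1}C_1(\l)$, and substituting this into \eqref{5.23} yields exactly \eqref{5.43}. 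By Theorem \ref{th5.9}, (ii) applied to $\tau=\tau_C$, the spectral function $\xi=\xi_{\tau_C}$ of the function $m$ in \eqref{5.43} is a spectral function of the equation \eqref{5.2}, and $m\in R[\bC^r]$.

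Next I would prove that $\xi$ is discrete. By Proposition \ref{pr2.6.8} one has $\tau_C\in\RM(\bC^{d-r})$, and since $S\supset\Smi$ is a closed symmetric extension with $n_\pm(S)<\infty$, Proposition \ref{pr4.7}, (ii) gives $S\in {\rm Sym}_d(\LI)$. Theorem \ref{th4.9} then yields $\wt S_\tau=\wt S_{\tau_C}\in {\rm Self}_d(\wt\gH)$, and by Theorem \ref{th5.10} $\wt S_\tau$ is unitarily equivalent to the multiplication operator $\L_\xi$ in $\LS$; hence $\s(\L_\xi)\in\cF$. As $\s(\L_\xi)$ is the topological support of $\xi$, the distribution $\xi$ is constant off a set in $\cF$, so it is discrete, $\xi=\{F,\Xi\}$. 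Assertion \ref{ass2.6.2} now gives $m\in\Rm[\bC^r]$, $F=F_{m}=\{t_k\}_{\nu_-}^{\nu_+}$ and $\xi_k=-\underset{t_k}{\rm res}\,m$; together with the previous paragraph this proves (i). Moreover $EV=\s(\wt S_\tau)=\s(\L_\xi)=F_{m}$ by Theorem \ref{th5.12}, which is the first assertion of (ii).

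It remains to establish \eqref{5.44}. The unitary operator $\wt V$ of Theorem \ref{th5.10} intertwines the orthogonal spectral measures, so $E(\{t_k\})=\wt V^*E_\xi(\{t_k\})\wt V$; since $\wt V\up\LI=V_\xi$, one gets $\wt V\pi_\D y=V_\xi\pi_\D y$ for $y\in\lI$ and, passing to adjoints, $P\wt V^*=V_\xi^*$ on $\LS$. Hence \eqref{5.35} becomes $\pi_\D y_k=V_\xi^*E_\xi(\{t_k\})V_\xi\pi_\D y$. By \eqref{5.18}, \eqref{5.0} and the discreteness of $\xi$, $E_\xi(\{t_k\})V_\xi\pi_\D y=\pi_\xi(\chi_{\{t_k\}}\wh y)$, where $\chi_{\{t_k\}}\wh y$ is the sequence whose only possibly nonzero term is the Fourier coefficient $\wh y_k$ at the index $k$; applying the formula \eqref{5.42} for $V_\xi^*$ gives $\pi_\D y_k=\pi_\D(\f_B(\cd,t_k)\xi_k\wh y_k)$. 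Finally, $y_k$ and $\f_B(\cd,t_k)\xi_k\wh y_k$ are both solutions of $l[y]=t_k\D(x)y$ having the same image under $\pi_\D$, so by the definiteness of \eqref{5.2} (\cite[Proposition 5.11]{Mog20}) they coincide as functions, which is \eqref{5.44}. The main obstacle is the relation computation of the first step; the remaining steps are a combination of Theorems \ref{th5.9}, \ref{th5.10}, \ref{th4.9}, \ref{th5.12} and Assertion \ref{ass2.6.2}.
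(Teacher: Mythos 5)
Your proposal is correct and follows essentially the same route as the paper: it identifies \eqref{5.43} with the Krein-type formula \eqref{5.23} for $\tau=\tau_C$ via the relation identity $-(\tau_C(\l)+M_4(\l))^{-1}=(C_0(\l)-C_1(\l)M_4(\l))^{-1}C_1(\l)$, invokes Theorems \ref{th5.9}, \ref{th5.12}, \ref{th5.10} and Assertion \ref{ass2.6.2} to get discreteness of $\xi$, $m\in\Rm[\bC^r]$ and $EV=F_m$, and then derives \eqref{5.44} from \eqref{5.35} through the intertwining $PE(\{t_k\})\up\LI=V_\xi^*E_\xi(\{t_k\})V_\xi$ together with \eqref{5.18}, \eqref{5.0}, \eqref{5.42} and the definiteness of the equation. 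The extra details you supply (injectivity of $C_0(\l)-C_1(\l)M_4(\l)$, the adjoint relation $P\wt V^*=V_\xi^*$, the final definiteness step) are exactly the points the paper leaves implicit, so there is no gap.
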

\begin{proof}
Let $\tau=\tau_C\in \RM (\bC^{d-r})$ be given by \eqref{2.20}. One can easily verify that
\begin{gather*}
-(\tau(\l)+M_4(\l))^{-1}=(C_0(\l)- C_1(\l)M_4(\l))^{-1}C_1(\l), \quad \l\in \CR
\end{gather*}
and therefore \eqref{5.23} can be written in the form \eqref{5.43}. According to Theorem \ref{th5.9} $m\in R[\bC^r]$ and the spectral function $\xi$ of $m$ is a spectral function of the equation \eqref{5.2}.

Assume that $\Pi=\{\bC^{d-r}, \G_0,\G_1\}$ is the boundary triplet  \eqref{5.13} for $S^*$ (see Propositions \ref{pr5.4} and \ref{pr5.4.1}) and let $\wt S_\tau=\wt S_\tau^*\in\cC (\wt\gH) \; (\wt\gH\supset \LI)$ be an exit space extension of  $S$ corresponding to $\tau$. Then according to Theorem \ref{th5.12} $\wt S_\tau\in {\rm Self}_d(\wt\gH)$ and $ EV=\s(\wt S_\tau)$. Moreover, by Theorem  \ref{th5.10} $\wt S_\tau$ is unitarily equivalent to the multiplication operator $\L_\xi$ in $\LS$ and hence $\L_\xi$ has the discrete spectrum. Therefore $\xi=\{F,\Xi\}$ is a discrete spectral function with $F=\s(\L_\xi)\in\cF$ and Assertion  \ref{ass2.6.2} implies that $m\in\Rm [\bC^r]$ and  $F=F_{m}$. On the other hand, $\s(\L_\xi)=\s(\wt S_\tau)=  EV$ and, consequently, $ EV=F_{m}$.

Now it remains to prove \eqref{5.44}. Let $E(\cd)$ be the orthogonal spectral measure of $\wt S_\tau$ and let $P$ be the orthoprojection in $\wt\gH$ onto $\LI$. Then by Theorem \ref{th5.10} for any $t_k\in   EV$ one has
\begin{gather}\label{5.45}
P E(\{t_k\})\up \LI=V_\xi^* E_\xi(\{t_k\})V_\xi,
\end{gather}
where $ V_\xi\in\B (\LI,\LS)$ is the isometry \eqref{5.18} and $E_\xi(\cd)$ is the orthogonal spectral measure of $\L_\xi$ given by \eqref{5.0}. According to Theorem \ref{th5.12} $y_k$ is defined by \eqref{5.35}. Combining \eqref{5.45} with \eqref{5.35}  one gets
\begin{gather}\label{5.46}
\pi_\D y_k = V_\xi^* E_\xi(\{t_k\}) V_\xi \pi_\D y, \quad y\in\lI.
\end{gather}
It follows from \eqref{5.18} that $ V_\xi \pi_\D y =\pi_\xi \wh y$, where $\wh y=\{\wh y_k\}_{\nu_-}^{\nu_+} \; (\wh y_k=\wh y(t_k))$ is the sequence of the Fourier coefficients of $y$. Substituting this equality into \eqref{5.46} and taking \eqref{5.0} and \eqref{5.42} into account one obtains $\pi_\D y_k=\pi_\D (\f_B(\cd,t_k)\xi_k \wh y_k)$. This yields \eqref{5.44}.
\end{proof}
Let $\dim\cH<\infty$ and let $C=(C_0,C_1)$ be a Nevanlinna pair in $\cH$. Then according to Lemma \ref{lem2.6.7} the subspace $\cK:=\ker C_1(\l)\subset\cH$ does not depend on $\l\in\CR$. Let $\cH_0:=\cH\ominus\cK$, let $C_{0j}(\l), \; j\in\{0,1\},$ and $C_{10}(\l)$ be entries of the block representations  \eqref{2.21} of $C_0(\l)$ and $C_1(\l)$ and let $\wh C_1(\l)\in \B (\cH)$ be given by \eqref{2.22}. Then according to Lemma \ref{lem2.6.7} $\ker \wh C_1(\l)=\{0\}$ and the relations \eqref{2.23} define a Nevanlinna function $\tau_0\in R[\cH_0]$. Let $\cB_{\tau_0 \infty}\in\B (\cH_0)$ and $D_{\tau_0 \infty}:\dom D_{\tau_0 \infty}\to \cH_0\; (\dom D_{\tau_0 \infty}\subset\cH_0)$ be operators corresponding to $\tau_0$ in accordance with Proposition \ref{pr2.4}, {\rm (i)}. In the following with a pair $C$ we associate a linear relation $\eta_C$ in $\cH$ given by
\begin{gather}\label{5.50}
\eta_C=\{\{h, - D_{\tau_0 \infty}h + \cB_{\tau_0 \infty}h_0+k\}:h\in\dom D_{\tau_0 \infty}, h_0\in\cH_0, k\in\cK \}.
\end{gather}
If $\ker C_1(\l)=\{0\}$ for some (and hence all) $\l\in\CR$, then definition of $\eta_C$ can be rather simplified. Namely, in this case $\wh C_1(\l)=C_1(\l), \; C_{00}(\l)=C_0(\l)$ and hence $\tau_0(\l)=\tau_C(\l)=-C_1^{-1}(\l)C_0(\l), \; \l\in\CR,$ and
\begin{gather}\label{5.51}
\eta_C=\{\{h, - D_{\tau_0 \infty}h + \cB_{\tau_0 \infty}h_0\}:h\in\dom D_{\tau_0 \infty}, h_0\in\cH_0\}.
\end{gather}
In the following theorem we provide sufficient conditions for the uniform convergence of the eigenfunction expansion \eqref{5.33}.
\begin{theorem}\label{th5.19}
Let under the assumptions of Theorem \ref{th5.12} $\eta_C$ be the linear relation in $\bC^{d-r}$ defined by \eqref{5.50} and let  $y\in\dom l\cap\lI$ be  a function  such  that: {\rm (i)} the  equality $l[y]=\D(x) f_y(x)$ (a.e. on  $\cI$) holds with some $f_y\in\lI$; {\rm (i)} the boundary conditions
\begin{gather}
(\cos B) y^{(1)}(a)+(\sin B) y^{(2)}(a)=0, \qquad \{\G_{0b}y, - \G_{1b} y\}\in \eta_C\label{5.54}
\end{gather}
are satisfied. Then
\begin{gather}\label{5.55}
y^{[j]}(x)=\sum_{k=\nu_-}^{\nu_+} y_k^{[j]}(x), \quad j\in \{0,1, \dots, 2r-1\},
\end{gather}
where $y_k$ are eigenfunctions from the expansion \eqref{5.33} of $y$.  The series in \eqref{5.55} converges absolutely for any $x\in\cI$ and uniformly on each compact interval $[a,b']\subset \cI$.
\end{theorem}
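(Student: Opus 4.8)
The plan is to reduce the uniform-convergence statement for the differential equation to the abstract setting of Theorems~\ref{th3.4} and~\ref{th3.5}, applied to the symmetric operator $S$ of Proposition~\ref{pr5.4} and the boundary triplet $\Pi=\{\bC^{d-r},\G_0,\G_1\}$ of Proposition~\ref{pr5.4.1}, and then to transfer back through the generalized Fourier transform $V_\xi$. First I would observe that the hypotheses (i)--(ii) on $y$ say exactly that $y\in\Dma$ (so $\wt y:=\pi_\D y\in\dom S^*$) and that $\{\G_0\wt y,\G_1\wt y\}=\{\G_{0b}y,-\G_{1b}y\}\in\eta_C$. By Theorem~\ref{th3.6} (or Theorem~\ref{th3.2}) the relation $\eta_C$ coincides, up to the identification in \eqref{3.10}, with the value at $t=0$ of the function $\eta_\tau$ attached to $\tau=\tau_C$; more to the point, \eqref{5.50} is precisely the description of the Shtraus family $S_{\wt S_\tau}(t)$ evaluated with the operators $\cB_{\tau_0\infty}$, $D_{\tau_0\infty}$ coming from Proposition~\ref{pr2.4}(i). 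The key structural fact I would extract is: the condition $\{\G_{0b}y,-\G_{1b}y\}\in\eta_C$ means $\wt y\in S_{\wt S_\tau}(t_0)$ for a suitable reference point, equivalently $\wt y=P\,\wt z$ for some $\wt z\in\dom\wt S_\tau$ lying in the ``regular'' part of $\wt S_\tau$, so that $\wt y$ is in the domain of a power of the resolvent of $\wt S_\tau$.

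The analytic heart of the argument is then a standard eigenfunction-expansion estimate. Writing $\wt y=\sum_k \wt y_k$ with $\wt y_k=PE(\{t_k\})\wt y$ (Theorem~\ref{th5.12}(iii)), and using $\pi_\D y_k=\pi_\D(\f_B(\cdot,t_k)\xi_k\wh y_k)$ from Theorem~\ref{th5.17}, the partial sums of \eqref{5.55} evaluated pointwise are
\begin{gather*}
\sum_{k=\nu_1}^{\nu_2} y_k^{[j]}(x)=\sum_{k=\nu_1}^{\nu_2}\f_B^{[j]}(x,t_k)\,\xi_k\,\wh y_k .
\end{gather*}
I would estimate $|\sum_k \f_B^{[j]}(x,t_k)\xi_k\wh y_k|$ by Cauchy--Schwarz in the inner product $\sum_k(\xi_k\,\cdot,\cdot)$ of $\lS$, splitting $\wh y_k$ as $\wh y_k=(t_k-t_0)^{-1}\wh{g}_k$, where $g=(S-t_0)y\in\lI$ (legitimate because $y$ lies in a domain where $l[y]-t_0\D y$ is again in $\lI$ and the boundary condition is of the Shtraus type, so $\wh y_k$ decays like $t_k^{-1}\wh g_k$). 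Then
\begin{gather*}
\Bigl|\sum_{k=\nu_1}^{\nu_2}\f_B^{[j]}(x,t_k)\xi_k\wh y_k\Bigr|^2
\le\Bigl(\sum_{k=\nu_1}^{\nu_2}\tfrac{1}{|t_k-t_0|^2}\,(\xi_k\f_B^{[j]}(x,t_k)^*,\f_B^{[j]}(x,t_k)^*)\Bigr)
\Bigl(\sum_{k=\nu_1}^{\nu_2}(\xi_k\wh g_k,\wh g_k)\Bigr).
\end{gather*}
The second factor is a tail of the Parseval sum for $g\in\lI$, hence tends to $0$. For the first factor one needs the bound $\sum_k |t_k-t_0|^{-2}(\xi_k\f_B^{[j]}(x,t_k)^*,\f_B^{[j]}(x,t_k)^*)<\infty$ locally uniformly in $x$; this follows because the function $x\mapsto(S-t_0)^{-1}\delta_x^{[j]}$-type object, i.e.\ the column of the resolvent kernel, lies in $\lI$ and its Fourier coefficients are exactly $(t_k-t_0)^{-1}\xi_k\f_B^{[j]}(x,t_k)^*$, together with the fact that the resolvent kernel and its quasi-derivatives up to order $2r-1$ depend continuously on $x$ on compacta. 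Here I would lean on Theorem~\ref{th5.9}/\ref{th5.10} to identify these Fourier coefficients and on the quasiregularity/discrete-spectrum assumption to guarantee $\f_B(\cdot,t_k)\in\lI$ with controlled norms.

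Putting the two factors together gives a uniform Cauchy estimate for the partial sums on every $[a,b']\subset\cI$, and absolute convergence for each fixed $x$ follows from the same Cauchy--Schwarz bound with $|\xi_k\wh y_k|$ in place of $\xi_k\wh y_k$. Since the $L^2_\D$-limit of these partial sums is $y$ (Theorem~\ref{th5.12}(ii)) and the uniform limit of continuous functions $y_k^{[j]}$ is continuous, the uniform limit must agree with $y^{[j]}$; for $j=0$ this is \eqref{5.33}, and for general $j\le 2r-1$ it gives \eqref{5.55}. The main obstacle I anticipate is the rigorous justification of the ``$\wh y_k=O(t_k^{-1})$'' decay, i.e.\ showing that the boundary condition $\{\G_{0b}y,-\G_{1b}y\}\in\eta_C$ is exactly what places $\wt y$ in the range of $(\wt S_\tau-t_0)^{-1}$ restricted to $\LI$ (the Shtraus-family characterization of Theorems~\ref{th3.4}--\ref{th3.5}), and in correctly handling the multivalued part $\cK$ and the operator $\cB_{\tau_0\infty}$ in \eqref{5.50} so that no ``non-decaying'' component survives; the three Cases of Theorems~\ref{th1.3}/\ref{th5.19} presumably correspond to the trichotomy $\cB_{\tau_0\infty}\ne0$, $\cB_{\tau_0\infty}=0$ with $\dom D_{\tau_0\infty}=\cH_0$, and $\cB_{\tau_0\infty}=0$ with $\dom D_{\tau_0\infty}\subsetneq\cH_0$, and each must be checked to yield the needed decay. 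The remaining steps (Cauchy--Schwarz, continuity of the resolvent kernel, interchange of limits) are routine once that reduction is in place.
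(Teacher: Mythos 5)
There is a genuine gap, and it sits exactly where you yourself flag ``the main obstacle''. First, your identification of the boundary condition is off: the relation $\eta_C$ in \eqref{5.50} is built from the operators $\cB_{\tau_0\infty}$, $D_{\tau_0\infty}$ of Proposition \ref{pr2.4}, (i) (behaviour of $\tau_0(iy)$ as $y\to\infty$), whereas the Shtraus family at a finite real point $t_0$ is $\eta_\tau(t_0)$ from \eqref{3.10}, built from $\cB_{\tau_0}(t_0)$, $D_{\tau_0}(t_0)$ of Proposition \ref{pr2.4}, (ii). So \eqref{5.54} does not say $\wt y\in\dom S_{\wt S_\tau}(t_0)$ ``for a suitable reference point''; by \cite[Theorem 3.8]{Mog19} (the ingredient used in the proof of Theorem \ref{th3.2}) it says that $\{\wt y,\pi_\D f_y\}$ belongs to the compression $C_{\wt S_\tau}=P\,\wt S_\tau\!\up\!\LI$, i.e.\ that $\wt y$, viewed as an element of $\wt\gH$, lies in $\dom\wt S_\tau$. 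Second, the decay mechanism you propose is not valid as stated: the exact identity $\wh y_k=(t_k-t_0)^{-1}\wh g_k$ with $g=(S-t_0)y\in\lI$ would require an integration by parts with vanishing boundary form at $b$, which fails because $y$ is not in $\dom S$ (it only satisfies \eqref{5.54}); the correct shifted element is $(\wt S_\tau-t_0)\wt y$, which lives in the exit space $\wt\gH$ and not in $\LI$, so your ``second factor is a tail of the Parseval sum for $g\in\lI$'' does not go through without reworking the argument through the unitary equivalence of Theorem \ref{th5.10}. Third, the locally uniform summability of $\sum_k|t_k-t_0|^{-2}\bigl(\xi_k\f_B^{[j]}(x,t_k)^*,\f_B^{[j]}(x,t_k)^*\bigr)$, which is the analytic heart of the uniform-convergence claim (and must be proved for all quasi-derivatives $j\le 2r-1$ and for a possibly singular endpoint $b$), is only asserted via a heuristic about resolvent kernels.

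For comparison, the paper does not attempt any of these estimates here: its proof of Theorem \ref{th5.19} is a reduction. It identifies $\eta_C$ with the relation $\eta_\tau$ of \cite[Theorem 2.4]{Mog20}, notes that \eqref{5.43} is equivalent to \eqref{5.23} so that the discrete spectral function $\xi$ of Theorem \ref{th5.17} is the spectral function corresponding to $\tau=\tau_C$, rewrites the expansion \eqref{5.33} via \eqref{5.44} as the inverse Fourier transform $y(x)=\int_\bR\f_B(x,t)\,d\xi(t)\wh y(t)$, and then invokes \cite[Theorem 5.14]{Mog20}, which is precisely the uniform-convergence statement you are trying to re-derive. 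Your outline is the natural direct route, and a corrected version (replacing the finite-point Shtraus identification by the compression/domain-of-$\wt S_\tau$ characterization, working with $(\wt S_\tau-t_0)\wt y$ through $\wt V$, and actually establishing the kernel bound) could in principle recover the cited result; but as written the decisive steps are either incorrect or missing.
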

\begin{proof}
Let $\tau=\tau_C\in\RM(\bC^{d-r})$ be given by \eqref{2.20} and let $\tau_0$ and $\cK$ be the operator and multivalued parts of $\tau$ respectively. Then by Lemma \ref{lem2.6.7} $\cK=\ker C_1(\l)$ and $\tau_0(\l)=-\wh C_1^{-1}(\l)C_{00}(\l), \; \l\in\CR$. This implies that $\eta_C=\eta_\tau$, where $\eta_\tau\in\C(\bC^{d-r})$   is the linear relation defined in \cite[Theorem 2.4]{Mog20}.

Assume that $\xi$ is a discrete spectral function of \eqref{5.2} defined in Theorem \ref{th5.17}. As was shown in the proof of this theorem equality \eqref{5.43} is equivalent to \eqref{5.23} and according to Theorem \ref{th5.9} $\xi$ is a spectral function corresponding to $\tau$ in the sense of \cite{Mog20} (see Definition \ref{def5.5}). Moreover, in view of  \eqref{5.44}  the expansion \eqref{5.33} can be written as the inverse Fourier transform
\begin{gather*}
y(x)=\int_\bR \f_B (x,t)\, d \xi(t) \wh y(t),
\end{gather*}
where $\wh y$ is the Fourier transform \eqref{5.15}  of  $y$.  Now the required statement follows from  \cite[Theorem 5.14]{Mog20}.
\end{proof}
In the case of a selfadjoint eigenvalue problem \eqref{1.1}, \eqref{1.3} the above results take a rather simpler form. Namely, the following corollary holds.
\begin{corollary}\label{cor5.19.1}
Assume that for equation \eqref{5.2} with the nontrivial weight $\D$ the operator $\Smi$ has the discrete spectrum and let the assumption {\rm (A1)} at the beginning of Section \ref{sub5.3} be satisfied. Moreover, let $\gN_t\; (t\in\bR)$ be the set of all solutions of the selfadjoint eigenvalue  problem \eqref{1.1}, \eqref{1.3} with $\l=t\in\bR,\; B=B^*\in\B(\bC^r), \; B_1=B_1^*\in\B(\bC^{d-r})$ and let $ EV$ be the set of all eigenvalues of this problem. Then:

{\rm(i)} Statements of Theorem \ref{th5.12} are valid and, moreover, eigenspaces $\gN_{t_k}$ are mutually orthogonal in $\lI$.

{\rm(ii)} Statements of Theorem \ref{th5.17} hold with $C_0(\l)=\cos B_1$ and $C_1(\l)=\sin B_1$ in  \eqref{5.43}.

{\rm(iii)} Assume that $y\in\dom l\cap\lI$ is a function such that the equality $l[y]=\D(x) f_y(x)$ holds a.e. on $\cI$ with some $f_y\in\lI$ and the boundary conditions \eqref{1.3} are satisfied. Then statements of Theorem \ref{th5.19} are valid.
\end{corollary}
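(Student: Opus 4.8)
The plan is to recognise the selfadjoint problem \eqref{1.1}, \eqref{1.3} as the particular case of \eqref{5.30}, \eqref{5.31} corresponding to the \emph{constant} pair $C_0(\l)\equiv\cos B_1$, $C_1(\l)\equiv\sin B_1$, and then to read off statements (i)--(iii) from Theorems \ref{th5.12}, \ref{th5.17} and \ref{th5.19}. First I would check that this constant pair is an entire Nevanlinna pair in $\bC^{d-r}$: since $B_1=B_1^*$, the matrices $\cos B_1$ and $\sin B_1$ are self-adjoint, commute, and satisfy $\cos^2 B_1+\sin^2 B_1=I_{d-r}$, whence $C(t)C^*(t)=I_{d-r}$ and $\ran C(t)=\bC^{d-r}$ for every $t\in\bC$; moreover $C_1(\l)C_0^*(\ov\l)-C_0(\l)C_1^*(\ov\l)=\sin B_1\cos B_1-\cos B_1\sin B_1=0$ and $\im\l\cd\im (C_1(\l)C_0^*(\l))=\im\l\cd\im (\sin B_1\cos B_1)=0$. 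Hence assumption {\rm (A2)} is fulfilled, and since with $C_0=\cos B_1$, $C_1=\sin B_1$ the second boundary condition in \eqref{5.31} is precisely the second condition in \eqref{1.3}, the problem \eqref{1.1}, \eqref{1.3} (with $\l=t$) is exactly the problem \eqref{5.30}, \eqref{5.31} treated in Theorem \ref{th5.12}. Consequently Theorem \ref{th5.12} gives statement (i) apart from the orthogonality, and Theorem \ref{th5.17}, specialised with $C_0=\cos B_1$, $C_1=\sin B_1$ in \eqref{5.43}, gives statement (ii) verbatim.

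For the orthogonality in (i) I would exploit that the parameter is now constant. Since $C$ is constant, $\tau=\tau_C$ of \eqref{2.20} is the constant relation $\theta:=\ker C(t)=\{\{h,h'\}\in(\bC^{d-r})^2:(\cos B_1)h+(\sin B_1)h'=0\}$, which is self-adjoint (because $\im C_1(t)C_0^*(t)=0$ and $\ran C(t)=\bC^{d-r}$, exactly as in the proof of Proposition \ref{pr3.8}). By Theorem \ref{th2.10}(ii) the exit space extension attached to a constant parameter is canonical, so here $\wt\gH=\LI$ and $\wt S_\tau=S_{-\theta}=S_{-\theta}^*$ (this is the operator $T$ of \eqref{1.1}, \eqref{1.3} from the Introduction). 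Hence $\wt\gN_{t_k}=\gN_{t_k}(\wt S_\tau)=\ran E(\{t_k\})$, where $E(\cd)$ is the orthogonal spectral measure of the self-adjoint $\wt S_\tau$ in $\LI$, and these subspaces are mutually orthogonal in $\LI$ for distinct $t_k$. As recorded in the proof of Theorem \ref{th5.12}, the equation \eqref{5.2} is definite, so $\pi_\D\up\gN_t$ is an isomorphism of $\gN_t$ onto $\wt\gN_t$ which, by the definition of the inner product on the quotient $\LI$, preserves the form $(\cd,\cd)_\D$. Therefore the mutual orthogonality of the $\wt\gN_{t_k}$ in $\LI$ pulls back to the mutual orthogonality of the eigenspaces $\gN_{t_k}$ in $\lI$, which is the extra assertion in (i).

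For statement (iii) it only remains to identify the abstract boundary condition of Theorem \ref{th5.19} with \eqref{1.3}. With $\tau_C\equiv\theta$ constant and self-adjoint, its multivalued part is $\cK=\ker\sin B_1$ and its operator part $\tau_0$ is the constant self-adjoint operator $\theta_{\rm op}$ on $\cH_0=\bC^{d-r}\ominus\cK$; hence in \eqref{5.50} one has $\cB_{\tau_0\infty}=0$, $\dom D_{\tau_0\infty}=\cH_0$, $D_{\tau_0\infty}=\theta_{\rm op}$, and therefore $\eta_C=\{\{h,-h'\}:\{h,h'\}\in\theta\}=\{\{h,h'\}\in(\bC^{d-r})^2:(\cos B_1)h-(\sin B_1)h'=0\}$. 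Thus $\{\G_{0b}y,-\G_{1b}y\}\in\eta_C$ is exactly the second condition in \eqref{1.3}, so any $y$ as in (iii) meets the hypotheses of Theorem \ref{th5.19}, whose conclusion is the required assertion. I do not anticipate a genuine obstacle here: the whole content is the three verifications above — that $(\cos B_1,\sin B_1)$ is an entire Nevanlinna pair, that the attached extension is canonical (so that $P=I$ and $\wt S_\tau$ is the operator $T$), and that $\eta_C$ is the relation cut out by the second condition in \eqref{1.3} — together with the spectral theorem for the self-adjoint $\wt S_\tau$; the only mildly delicate point is keeping the sign conventions $\G_1=-\G_{1b}$ in \eqref{5.13} and the minus sign in \eqref{5.50} straight when verifying the form of $\eta_C$.
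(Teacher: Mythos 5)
Your proposal is correct and follows essentially the same route as the paper: view $(\cos B_1,\sin B_1)$ as a constant entire Nevanlinna pair, invoke Theorems \ref{th5.12} and \ref{th5.17}, and compute $\eta_C=-\t$ so that the condition $\{\G_{0b}y,-\G_{1b}y\}\in\eta_C$ becomes the second equality in \eqref{1.3}, whence Theorem \ref{th5.19} gives (iii). Your explicit argument for the mutual orthogonality of the $\gN_{t_k}$ (constant self-adjoint $\tau$ gives a canonical extension $\wt S_\tau=S_{-\t}$ in $\LI$, and definiteness lets the orthogonality of $\ran E(\{t_k\})$ pull back through $\pi_\D$) is a welcome filling-in of a point the paper leaves implicit, but it is the same underlying reasoning.
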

\begin{proof}
Clearly, the equalities $C_0(\l)=\cos B_1, \; C_1(\l)=\sin B_1,\; \l\in\bC,$ defines a pair $C=(C_0,C_1)\in {\rm ENP}\, (\bC^{d-r}) $, for which the boundary conditions \eqref{5.31} take the form \eqref{1.3}. Therefore  statements (i) and (ii) are  implied by Theorems \ref{th5.12} and \ref{th5.17} respectively. Next, $\tau_C(\l)=\t,\; \l\in\bC,$ with $\t=\t^*=\{\{h,h'\}\in \cH^2: (\cos B_1) h+ (\sin B_1) h'=0\}$ and hence $\cK=\ker (\sin B_1)=\mul \t, \; \tau_0(\l)=\t_{\rm op}, \; \l\in\CR$. Clearly, $\cB_{\tau_0\infty}=0$ and the equality $\im \tau_0(\l)=0$ yields $\dom D_{\tau_0\infty}=\cH_0$ and $D_{\tau_0\infty}=\t_{\rm op}$. Therefore by \eqref{5.50} $\eta_C=-\t $ and the second boundary condition in \eqref{5.54} takes the form $\{\G_{0b}y,\G_{1b}y\}\in\t$, which is equivalent to the second equality in \eqref{1.3}. Now statement (iii) follows from Theorem \ref{th5.19}.
\end{proof}
\subsection{The case of a quasiregular  equation}
Recall that  differential equation \eqref{5.2} is called  quasiregular if it has the maximal formal deficiency indices $d=2r$ and regular if it is given on a compact interval $\cI=[a,b]$ (the latter implies that $p_0^{-1},\; p_j, \; j\in\{1,2, \dots, r\}$ and $\D$ are integrable on $\cI$). Clearly, each regular equation \eqref{5.2} is quasiregular.
\begin{proposition}\label{pr5.20}
If equation \eqref{5.2} with the nontrivial weight $\D$ is quasiregular, then the operator $\Smi$ has the discrete spectrum.
\end{proposition}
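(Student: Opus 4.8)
The plan is to reduce the statement, via Proposition~\ref{pr4.7}(i), to producing one canonical self-adjoint extension of $\Smi$ in $\LI$ which has discrete spectrum, and then to obtain such an extension by showing that its resolvent is compact. By definition of quasiregularity, $d=N_\pm=2r$, i.e. the space $\cN_\l$ of $\lI$-solutions of \eqref{5.2} is of maximal dimension $2r$ for $\l\in\CR$; since $a$ is regular (a solution is determined by its quasi-derivatives at $a$, analytically in $\l$), variation of parameters shows that \emph{every} solution of \eqref{5.2} lies in $\lI$, for \emph{every} $\l\in\bC$. By Theorem~\ref{th5.2}, $\Smi$ is a closed densely defined symmetric operator with $n_+(\Smi)=n_-(\Smi)=2r<\infty$, so it admits canonical self-adjoint extensions $\wt A=\wt A^*\in\cex$ (for instance the operator $T$ attached to the self-adjoint problem \eqref{1.1}, \eqref{1.3}, which extends $S$ and hence $\Smi$). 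By Proposition~\ref{pr4.7}(i), in the canonical case $\wt\gH=\gH=\LI$, it suffices to show that one such $\wt A$ belongs to ${\rm Self}_d(\LI)$, equivalently that $\wt A$ has compact resolvent.

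To get compactness, fix $\l_0\in\CR$, so $(\wt A-\l_0)^{-1}\in\B(\LI)$. For a quasiregular equation this resolvent is an integral operator
\[
(\wt A-\l_0)^{-1}\wt f=\pi_\D\Bigl(\int_\cI G(\cd,s,\l_0)\,\D(s)f(s)\,ds\Bigr),\qquad f\in\wt f ,
\]
whose Green's function $G(x,s,\l_0)$ is a finite linear combination — with coefficients determined by $\l_0$ and by the boundary conditions defining $\wt A$ — of terms $\theta_i(x,\l_0)\theta_j(s,\l_0)$ multiplied by the step functions $\chi_{[a,s]}$ or $\chi_{[s,b\rangle}$, where the $\theta_i$ are solutions of \eqref{5.2} at $\l_0$ or $\ov{\l_0}$, hence elements of $\lI$. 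Therefore
\[
\int_\cI\!\int_\cI|G(x,s,\l_0)|^2\D(x)\D(s)\,dx\,ds\le\sum_{i,j}|c_{ij}|^2\,\|\theta_i\|_\D^2\,\|\theta_j\|_\D^2<\infty ,
\]
so the above operator (well defined on $\LI$, since its value depends on $f$ only through $\D f$) is Hilbert--Schmidt on $\LI$, in particular compact. A self-adjoint operator with compact resolvent has $\s(\wt A)=\s_p(\wt A)\in\cF$ with $\dim\gN_t(\wt A)<\infty$, i.e. $\wt A\in{\rm Self}_d(\LI)$; Proposition~\ref{pr4.7}(i) then yields $\Smi\in{\rm Sym}_d(\LI)$, which is exactly the assertion that $\Smi$ has discrete spectrum.

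The main obstacle is the justification of this Green's-function (integral-kernel) form of the resolvent when $\D$ is allowed to vanish on subintervals — in particular, checking that the variation-of-parameters construction together with the singular boundary values $\G_{0b},\G_{1b}$ at $b$ really produces a kernel $G$ of the stated type, and that passing to the quotient space $\LI$ does not destroy the Hilbert--Schmidt bound. Here I would lean on the resolvent analysis of the companion paper \cite{Mog20}; since $a$ is regular the behaviour near $a$ is routine, and the whole point is that quasiregularity ($\cN_\l$ maximal and contained in $\lI$) makes every constituent of $G$ square-integrable against $\D$.

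As a partial check independent of this machinery, one can observe directly that $\gN_t(\Smi)=\{0\}$ for every $t\in\bR$: an element of $\gN_t(\Smi)$ has a representative $y\in\Dmi$ which solves \eqref{5.2} with $\l=t$ and satisfies $y^{(1)}(a)=y^{(2)}(a)=0$, so $y\equiv 0$ by uniqueness for the initial value problem at the regular endpoint $a$; and then closedness of $\ran(\Smi-t)$ follows from Lemma~\ref{lem4.5}(i) applied to $\Smi-t\subset\wt A-t$ once $\wt A$ is known to have compact resolvent. Thus the entire weight of the argument sits in the compact-resolvent (Hilbert--Schmidt) claim of the second paragraph.
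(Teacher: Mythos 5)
Your overall strategy is legitimate and is the classical one (limit-circle type equation $\Rightarrow$ some self-adjoint realization has a Hilbert--Schmidt resolvent $\Rightarrow$ discrete spectrum, then pass down to $\Smi$), and the reduction steps you use are sound: Proposition \ref{pr4.7}(i) (or Lemma \ref{lem4.5}(i)) does reduce the claim to exhibiting one extension $\wt A=\wt A^*\in\cex$ of $\Smi$ in $\LI$ with discrete spectrum, and your direct verification that $\gN_t(\Smi)=\{0\}$ via uniqueness at the regular endpoint $a$ is correct (it uses that the equation is definite, so that the representative $y\in\Dmi$ is determined by $\wt y$). This is, however, not the paper's route at all: the paper proves the proposition in two lines by invoking \cite[Proposition 5.2]{Mog20}, which gives a quasiregular \emph{definite} Hamiltonian system whose minimal operator is unitarily equivalent to $\Smi$, and then quotes the known discreteness of such Hamiltonian minimal operators from \cite{BHS,LesMal03}.

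The genuine gap in your argument is exactly the step you flag as ``the main obstacle'' and then do not close: the claim that a canonical self-adjoint extension $\wt A$ of $\Smi$ has a resolvent of the integral form $(\wt A-\l_0)^{-1}\wt f=\pi_\D\bigl(\int_\cI G(\cd,s,\l_0)\D(s)f(s)\,ds\bigr)$ with $G$ a finite combination of products of $\lI$-solutions. In the degenerating-weight setting this is precisely the nontrivial content of the proposition: one has to construct solutions at $\l_0$ satisfying the abstract boundary condition at $b$ (expressed through the singular boundary values $\G_{0b},\G_{1b}$ of Proposition \ref{pr5.4.1}, not through pointwise data), verify that the relevant boundary matrix is invertible for $\l_0\in\CR$ for the chosen extension, and check that the variation-of-parameters formula really inverts $\wt A-\l_0$ on the quotient space $\LI$, where the operators are only well defined because the equation is definite (\cite[Proposition 5.11]{Mog20}). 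You defer all of this to ``the resolvent analysis of the companion paper,'' but \cite{Mog20} does not contain a Green's-function/resolvent-kernel analysis of this kind; what it supplies is the unitary equivalence with a Hamiltonian system (the route the paper actually takes) and spectral-function machinery. So as written, the proof's central analytic claim is asserted rather than proved. The cleanest repairs are either to cite \cite[Proposition 5.2]{Mog20} and transfer the known discreteness result for quasiregular definite Hamiltonian systems, as the paper does, or to stay inside the paper's framework and show (e.g.\ via Theorem \ref{th5.10}, Proposition \ref{pr5.22} and Assertion \ref{ass2.6.2}) that the relevant spectral function is discrete, which again ultimately rests on the Hamiltonian-system results rather than on a direct Green's-function computation.
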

\begin{proof}
Assume that \eqref{5.2} is quasiregular. Then according to \cite[Proposition 5.2]{Mog20} there exists a quasiregular  definite Hamiltonian system on  $\cI$ such that the minimal operator $\Tmi$ generated by this system is unitarily equivalent to $\Smi$. Since $\Tmi$ has the discrete spectrum (see e.g. \cite{BHS,LesMal03}), so is $\Smi$. \end{proof}
\begin{remark}\label{rem5.21}
For the quasiregular equation \eqref{5.2} $\f_C(\cd,t_k)\in\cL_\D^2(\cI;\bC^r)$ and hence for any $f\in\lI$ the integral in \eqref{5.41} exists as the Lebesgue integral. Therefore in this case the Fourier coefficients $\wh f_k$ of $f$ are defined by \eqref{5.41} independently on the spectral function $\xi$.
\end{remark}
According to \cite{Hol85} for quasiregular equation \eqref{5.2} the  equalities
\begin{gather}
\G_{0b}y:=\lim_{x\to b} ( (\psi_C^{(2)}(x,0))^*y^{(1)}(x) - (\psi_C^{(1)}(x,0))^*y^{(2)}(x))  \label{5.59}\\
\G_{1b}y:=\lim_{x\to b} [ -(\f_C^{(2)}(x,0))^*y^{(1)}(x) + (\f_C^{(1)}(x,0))^*y^{(2)}(x)] , \quad y\in\Dma. \label{5.60}
\end{gather}
define a surjective operator $(\G_{0b}, \G_{1b})^\top:\Dma\to (\bC^r)^2$ satisfying \eqref{5.12.1}.

In the following two propositions we provide some peculiarities of the previous results for quasiregular equations.
\begin{proposition}\label{pr5.22}
Assume  that  equation \eqref{5.2} with the nontrivial weight $\D$ is quasiregular and let assumptions {\rm (A1)} and {\rm (A2)} at the beginning of Section \ref{sub5.3} be satisfied with the operator $(\G_{0b}, \G_{1b})^\top$ given by \eqref{5.59} and \eqref{5.60}. Moreover, let $\gN_t (t\in\bR)$ be the set of all solutions of the eigenvalue problem \eqref{5.30}, \eqref{5.31} and let $EV$ be the set of all eigenvalues of the same problem.  Then statements of  Theorems \ref{th5.12}, \ref{th5.17} and \ref{th5.19} are valid. Moreover, in this case  the operator-function $m\in\Rm [\bC^r]$  in Theorem  \ref{th5.17} can be calculated via the linear-fractional transform
\begin{gather} \label{5.61}
m(\l)=(C_0(\l)w_1(\l)+C_1(\l)w_3(\l))^{-1}
(C_0(\l)w_2(\l)+C_1(\l)w_4(\l)), \quad \l\in\CR
\end{gather}
with the operator coefficients $w_j(\l)(\in \B (\bC^r))$ given by
\begin{gather*}
w_1(\l)=I_r+\l\smallint_\cI \psi_C^*(x,0)\D(x) \f_C(x,\l)\,dx, \qquad w_2(\l)=\l\smallint_\cI \psi_C^*(x,0)\D(x) \psi_C(x,\l)\,dx\\
w_3(\l)=-\l\smallint_\cI \f_C^*(x,0)\D(x) \f_C(x,\l)\,dx, \qquad w_4(\l)=I_r-\l\smallint_\cI \f_C^*(x,0)\D(x) \psi_C(x,\l)\,dx.
\end{gather*}
\end{proposition}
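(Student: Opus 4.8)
The plan is to reduce everything to the general results already proved for quasiregular equations and then to identify the two inputs that remain to be computed: the operator function $M\in R[\bC^r\oplus\bC^{d-r}]$ from Theorem~\ref{th5.9}, and, for the quasiregular case, the explicit form of the transform \eqref{5.61}. First I would observe that when \eqref{5.2} is quasiregular we have $d=2r$, so $\bC^{d-r}=\bC^r$, and by Proposition~\ref{pr5.20} the minimal operator $\Smi$ has discrete spectrum; hence the standing hypotheses of Theorems~\ref{th5.12}, \ref{th5.17} and~\ref{th5.19} are all met with the surjective boundary operator $(\G_{0b},\G_{1b})^\top$ given by \eqref{5.59}, \eqref{5.60}. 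Therefore the assertions of those three theorems apply verbatim, and the only new content is formula \eqref{5.61} for the meromorphic Nevanlinna function $m$ appearing in Theorem~\ref{th5.17}.

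For the explicit formula I would start from \eqref{5.43}, namely $m(\l)=m_0(\l)+M_2(\l)(C_0(\l)-C_1(\l)M_4(\l))^{-1}C_1(\l)M_3(\l)$, and compute the blocks $m_0,M_2,M_3,M_4$ of the function $M$ in \eqref{5.22} for the quasiregular equation. The key step is to evaluate the operator solutions $v(\cdot,\l)$ and $u(\cdot,\l)$ characterized in Theorem~\ref{th5.9}, (i) in terms of the fundamental solutions $\f_C(\cdot,\l)$, $\psi_C(\cdot,\l)$ with initial data \eqref{5.14}. Writing a solution with prescribed data at $a$ as a combination of $\f_C$ and $\psi_C$ and then imposing the singular boundary condition at $b$ via \eqref{5.59}, \eqref{5.60}, one obtains $\G_{0b}$ and $\G_{1b}$ of $\f_C(\cdot,\l)$ and $\psi_C(\cdot,\l)$ as integrals against $\D$; these are precisely the $w_j(\l)$ in the statement. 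Concretely, using the Lagrange-identity computation that expresses $\G_{jb}$ of a solution of \eqref{5.2} in terms of the corresponding quantities at $\l=0$ plus a $\l$-weighted integral $\int_\cI(\cdot)^*(x,0)\D(x)(\cdot)(x,\l)\,dx$ (this is the standard variation-of-parameters identity, already implicit in \cite{Mog20}), one identifies $\G_{0b}\f_C(\l)$ and $\G_{1b}\f_C(\l)$ with $-w_3(\l)$ and a block built from $w_1(\l)$, and similarly for $\psi_C$ with $w_1,w_2,w_4$. Feeding these into the definitions $M_3=-\G_{1b}v(\l)$, $M_4=-\G_{1b}u(\l)$ and the analogous expressions for $m_0,M_2$, and then substituting into \eqref{5.43}, a short linear-algebra manipulation collects the result into the single linear-fractional transform \eqref{5.61}; the algebraic identity $-(\tau(\l)+M_4(\l))^{-1}=(C_0(\l)-C_1(\l)M_4(\l))^{-1}C_1(\l)$ used in the proof of Theorem~\ref{th5.17} is what lets one pass between the $C_j$-form and the $\tau$-form.

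The main obstacle I expect is bookkeeping: correctly tracking which fundamental solution contributes to which block of $M$, and verifying that the integrals $\int_\cI\psi_C^*(x,0)\D(x)\f_C(x,\l)\,dx$ etc.\ converge (this is exactly where quasiregularity is used, since $\f_C(\cdot,t),\psi_C(\cdot,t)\in\cL_\D^2(\cI;\bC^r)$ by Remark~\ref{rem5.21}, guaranteeing the integrands are in $L^1$ on $\cI$). One also has to check that $C_0(\l)w_1(\l)+C_1(\l)w_3(\l)$ is invertible on $\CR$, which follows because \eqref{5.61} is just a rewriting of the well-defined Nevanlinna function from Theorem~\ref{th5.9}. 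No genuinely new analytic difficulty arises; the content is the identification of the coefficients, after which Theorems~\ref{th5.12}, \ref{th5.17}, \ref{th5.19} supply the eigenvalue, $\lI$-convergence, and uniform-convergence conclusions respectively.
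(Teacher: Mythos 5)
Your reduction of the proposition to Proposition \ref{pr5.20} together with Theorems \ref{th5.12}, \ref{th5.17} and \ref{th5.19} is exactly what the paper does, but your derivation of formula \eqref{5.61} is a genuinely different route: the paper never computes the blocks of $M$ at all, it passes (as in the proof of Proposition \ref{pr5.20}, via \cite[Proposition 5.2]{Mog20}) to a quasiregular definite Hamiltonian system whose minimal operator is unitarily equivalent to $\Smi$ and then simply quotes Theorem 6.16 and Proposition 6.11 of \cite{Mog15}, which yield \eqref{5.61} for such systems. Your plan stays inside the present framework and does work: the Lagrange identity applied on $[a,x]$ with $x\to b$ gives, with the conventions \eqref{5.59}--\eqref{5.60}, $\G_{0b}\f_C(\l)=w_1(\l)$, $\G_{0b}\psi_C(\l)=w_2(\l)$, $\G_{1b}\f_C(\l)=w_3(\l)$, $\G_{1b}\psi_C(\l)=w_4(\l)$; decomposing the solutions $v(\cdot,\l),u(\cdot,\l)$ of Theorem \ref{th5.9}, (i) along $\f_C,\psi_C$ then yields $m_0=w_1^{-1}w_2$, $M_2=w_1^{-1}$, $M_3=w_4-w_3w_1^{-1}w_2$, $M_4=-w_3w_1^{-1}$, and substitution into \eqref{5.43} together with the identity $(C_0+C_1w_3w_1^{-1})^{-1}=w_1(C_0w_1+C_1w_3)^{-1}$ collapses the expression to \eqref{5.61}. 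Two details you gloss over deserve care: the identification in your sketch of $\G_{0b}\f_C$, $\G_{1b}\f_C$ with ``$-w_3$ and a block built from $w_1$'' is garbled (the correct matching is the one above, and signs must be tracked against the convention $\G_1\wt y=-\G_{1b}y$ of \eqref{5.13}); and the invertibility of $w_1(\l)$ for $\l\in\CR$, needed for the block computation, is not automatic from \eqref{5.61} being ``a rewriting'' but follows because $w_1(\l)h=0$ with $h\neq0$ would place the element $\pi_\D(\f_C(\cdot,\l)h)$, nonzero by definiteness of the equation, into $\gN_\l(A_0)$ with $A_0=\ker\G_0$ self-adjoint. With these points supplied your argument is a correct, self-contained alternative; the paper's citation buys brevity, while your computation buys independence from the Hamiltonian-system formalism of \cite{Mog15}.
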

\begin{proof}
Application of \cite[Theorem 6.16
and Proposition 6.11]{Mog15} to the same Hamiltonian system as in the proof of Proposition \ref{pr5.20} gives  \eqref{5.61}. This  and  Proposition \ref{pr5.20} yield the result.
\end{proof}
For regular equation \eqref{5.2} one can put in the assumption (A1)
$\G_{0b}y=y^{(1)}(b)$ and $\G_{1b}y= y^{(2)}(b),\;y\in\Dma$. In this case the boundary conditions \eqref{5.31} take the form
\begin{gather}
(\cos B) y^{(1)}(a)+(\sin B) y^{(2)}(a)=0,\qquad  C_0(t)y^{(1)}(b)+C_1(t)y^{(2)}(b)=0.\label{5.63}
\end{gather}
\begin{proposition}\label{pr5.23}
Let  equation \eqref{5.2} with the nontrivial weight $\D$ be regular and let the assumption {\rm (A2)} be satisfied.  Then statements of Theorems \ref{th5.12}, \ref{th5.17} and \ref{th5.19} are valid for the eigenvalue problem \eqref{5.30}, \eqref{5.63}. Moreover, in this case the boundary conditions \eqref{5.54} take the form
\begin{gather*}
(\cos B) y^{(1)}(a)+(\sin B) y^{(2)}(a)=0, \qquad \{y^{(1)}(b), -y^{(2)}(b)\}\in\eta_C
\end{gather*}
and the operator function $m\in\Rm [\bC^r]$ in Theorem  \ref{th5.17} can be calculated via
\begin{gather*}
m(\l)=(C_0(\l)\f_B^{(1)}(b,\l)+C_1(\l)\f_B^{(2)}(b,\l))^{-1}(C_0(\l)
\psi_B^{(1)}(b,\l)+C_1(\l)\psi_B^{(2)}(b,\l)),  \;\; \l\in\CR.
\end{gather*}
\end{proposition}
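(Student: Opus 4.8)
The plan is to derive Proposition \ref{pr5.23} by applying Theorems \ref{th5.12}, \ref{th5.17} and \ref{th5.19} to the elementary boundary data that a regular equation carries, and then by an explicit computation of the operator function \eqref{5.22} for that case. First I would fix the standing data. A regular equation is quasiregular, so by Proposition \ref{pr5.20} the minimal operator $\Smi$ has the discrete spectrum; moreover quasiregularity forces $d=2r$, hence $d-r=r$ and the entire Nevanlinna pair $C=(C_0,C_1)$ of {\rm (A2)} takes values in $\B(\bC^r)$. Since $\cI=[a,b]$ is compact and $p_0^{-1},p_1,\dots,p_r,\D$ are integrable on $\cI$, each $y\in\Dma$ and all its quasi-derivatives extend continuously to $b$, so the limit defining $[y,z]_b$ is attained at $x=b$, giving $[y,z]_b=(y^{(1)}(b),z^{(2)}(b))_{\bC^r}-(y^{(2)}(b),z^{(1)}(b))_{\bC^r}$ for all $y,z\in\Dma$. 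Hence the operator $(\G_{0b},\G_{1b})^\top:\Dma\to(\bC^r)^2$ given by $\G_{0b}y=y^{(1)}(b)$, $\G_{1b}y=y^{(2)}(b)$ is surjective (arbitrary Cauchy data at $b$ are attained by solutions of $l[y]=0$, which lie in $\Dma$ for a regular equation) and satisfies \eqref{5.12.1}; thus assumption {\rm (A1)} holds with this operator, while {\rm (A2)} holds by hypothesis.

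With this choice, Propositions \ref{pr5.4} and \ref{pr5.4.1} provide the symmetric operator $S$, its adjoint $S^*$, and the boundary triplet $\Pi=\{\bC^{r},\G_0,\G_1\}$ for $S^*$, and the separated conditions \eqref{5.31} turn into \eqref{5.63}. Since all the hypotheses of Theorems \ref{th5.12}, \ref{th5.17} and \ref{th5.19} are now met, those theorems apply verbatim to the problem \eqref{5.30}, \eqref{5.63}; in Theorem \ref{th5.19} the conditions \eqref{5.54} become $\cos B\cd y^{(1)}(a)+\sin B\cd y^{(2)}(a)=0$ and $\{y^{(1)}(b),-y^{(2)}(b)\}\in\eta_C$.

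It remains to evaluate the function $m\in\Rm[\bC^r]$, which by \eqref{5.43} equals $m_0(\l)+M_2(\l)(C_0(\l)-C_1(\l)M_4(\l))^{-1}C_1(\l)M_3(\l)$ with the entries \eqref{5.22} of $M$. For a regular equation these entries are found directly from Theorem \ref{th5.9}{\rm (i)}: writing the operator solutions there as $v(\cd,\l)=\f_B(\cd,\l)A+\psi_B(\cd,\l)D$ and $u(\cd,\l)=\f_B(\cd,\l)A'+\psi_B(\cd,\l)D'$, the initial values \eqref{5.14} and the conditions at $a$ force $D=-I_r$, $D'=0$, while $\G_{0b}v(\l)=v^{(1)}(b,\l)=0$ and $\G_{0b}u(\l)=u^{(1)}(b,\l)=I_r$ give $A=(\f_B^{(1)}(b,\l))^{-1}\psi_B^{(1)}(b,\l)$ and $A'=(\f_B^{(1)}(b,\l))^{-1}$ (invertibility of $\f_B^{(1)}(b,\l)$ for $\l\in\CR$ being automatic from the uniqueness of $v$ in Theorem \ref{th5.9}{\rm (i)}). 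A short computation then gives $m_0(\l)=A$, $M_2(\l)=A'$, $M_3(\l)=\psi_B^{(2)}(b,\l)-\f_B^{(2)}(b,\l)A$, $M_4(\l)=-\f_B^{(2)}(b,\l)(\f_B^{(1)}(b,\l))^{-1}$; substituting into \eqref{5.43} and factoring $(C_0(\l)\f_B^{(1)}(b,\l)+C_1(\l)\f_B^{(2)}(b,\l))^{-1}$ out on the left yields the asserted linear-fractional formula for $m$.

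All the analytic content --- discreteness of the spectrum, the $\LI$-convergent expansion, the uniformly convergent expansion, and the abstract representation \eqref{5.43} --- is already supplied by the cited results, so the argument is essentially bookkeeping; the one step needing real work is the explicit evaluation of the entries of $M$ for the regular case and the algebraic simplification of \eqref{5.43}, where the only subtlety is keeping track of which fundamental solution carries which normalisation and verifying that the finite matrices being inverted are nonsingular for $\l\in\CR$.
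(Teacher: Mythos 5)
Your argument is correct, and it reaches the conclusion by a genuinely different route than the paper: the paper's proof is a one-line reduction, combining Proposition \ref{pr5.20} (regular $\Rightarrow$ quasiregular $\Rightarrow$ $\Smi$ has discrete spectrum) with the external result \cite[Theorem 5.15]{Mog20}, which already supplies the regular-case form of the boundary maps, of the conditions \eqref{5.54}, and of the function $m$. You instead keep everything inside the paper: you verify that for a regular equation the choice $\G_{0b}y=y^{(1)}(b)$, $\G_{1b}y=y^{(2)}(b)$ is surjective and satisfies \eqref{5.12.1} (so (A1) holds — the paper merely asserts ``one can put''), invoke Theorems \ref{th5.12}, \ref{th5.17}, \ref{th5.19}, and then compute the entries of \eqref{5.22} explicitly: writing $v=\f_B A+\psi_B D$, $u=\f_B A'+\psi_B D'$ and using that $\cos B$, $\sin B$ commute, the conditions of Theorem \ref{th5.9}(i) force $D=-I_r$, $D'=0$, $A=(\f_B^{(1)}(b,\l))^{-1}\psi_B^{(1)}(b,\l)$, $A'=(\f_B^{(1)}(b,\l))^{-1}$, whence $m_0=A$, $M_2=A'$, $M_3=\psi_B^{(2)}(b,\l)-\f_B^{(2)}(b,\l)A$, $M_4=-\f_B^{(2)}(b,\l)(\f_B^{(1)}(b,\l))^{-1}$; the factorization $C_0(\l)-C_1(\l)M_4(\l)=(C_0(\l)\f_B^{(1)}(b,\l)+C_1(\l)\f_B^{(2)}(b,\l))(\f_B^{(1)}(b,\l))^{-1}$ then collapses \eqref{5.43} to exactly the stated linear-fractional formula — I checked this algebra and it is right. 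Your approach buys self-containedness and an explicit identification of the Weyl-function entries in the regular case; the paper's buys brevity and uniformity with Proposition \ref{pr5.22}, both being delegated to \cite{Mog15}, \cite{Mog20}. One terse spot: the invertibility of $\f_B^{(1)}(b,\l)$ for $\l\in\CR$, which you attribute to uniqueness in Theorem \ref{th5.9}(i), deserves a sentence — e.g.\ a nonzero $h\in\ker\f_B^{(1)}(b,\l)$ would let you add $\f_B(\cd,\l)hg^*$ to $v$ without violating either boundary condition, contradicting uniqueness; equivalently, such an $h$ would produce a nonreal eigenvalue of a selfadjoint boundary problem. With that remark added, the proof is complete.
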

\begin{proof}
The required statements are implied by  Proposition \ref{pr5.20} and \cite[Theorem 5.15]{Mog20}.
\end{proof}
In the case $r=1$  \eqref{5.2} takes the form of the Sturm-Liouville equation \eqref{1.6}. Below we prove  Theorems \ref{th1.2}, \ref{th1.3} and Remark \ref{rem1.4} concerning quasiregular equation \eqref{1.6}.
\begin{proof}
In view of \eqref{5.51} the  linear relation $\eta_C$ in $\bC$ is defined as follows: (i) in Case 1 $\eta_C=\{0\}\oplus\bC$; (ii) in Case 2 $\eta_C=\{\{h,-D_\infty h\}:h\in\bC\}$; (iii) in Case 3 $\eta_C=\{0\}$. Now application of Proposition \ref{pr5.22} to the eigenvalue problem \eqref{1.6}, \eqref{1.11} gives Theorems \ref{th1.2} and \ref{th1.3}. Finally, Remark \ref{rem1.4} is implied by Proposition \ref{pr5.23}.
\end{proof}
\subsection{Example}
Consider the eigenvalue problem
\begin{gather}
-y''=\l y, \quad x\in\cI=[0,1], \;\;\l\in\bC\label{5.67}\\
y'(0)=0, \qquad \l y(1)-y'(1)=0\label{5.68},
\end{gather}
i.e., the eigenvalue problem \eqref{1.6}, \eqref{1.11} with $p(x)\equiv 1, \; q(x)\equiv 0, \; \D(x)\equiv 1, \; x\in\cI=[0,1],$ in \eqref{1.6} and $B=\frac \pi 2, \; C_0(\l)=\l, \; C_1(\l)=-1$ in \eqref{1.11}. Since the equation \eqref{5.67} is regular and $C=(C_0,C_1)\in {\rm ENP}(\bC)$, we may  apply Theorems \ref{th1.2}, \ref{th1.3} and Remark \ref{rem1.4} to the problem \eqref{5.67},\eqref{5.68}.

The immediate checking shows that the solutions $\f_B(\cd,\l)$ and $\psi_B(\cd,\l)$ of \eqref{5.67} are $\f_B(x,\l)=\cos (\sqrt\l\, x)$ and $\psi_B(x,\l)=\tfrac 1 {\sqrt \l} \sin (\sqrt\l\, x)$. Hence
\vskip 0.5mm
\centerline{$\f_B(1,\l)=\cos \sqrt\l, \qquad  \f_B'(1,\l)=-\sqrt \l \sin \sqrt\l$}
\vskip 0.5mm
\centerline{$\psi_B(1,\l)=\tfrac 1 {\sqrt \l} \sin \sqrt\l, \qquad \psi_B'(1,\l)=\cos \sqrt\l $}
\vskip 0.5mm
\noindent and according to Theorem \ref{th1.2} and Remark \ref{rem1.4} the equality $m(\l)=\displaystyle {{\Phi(\l)}\over{\Psi (\l)}},\; \l\in\CR,$ with entire functions
\vskip 0.5mm
\centerline{$\Phi(\l)=\psi_B(1,\l)\, C_0(\l)+\psi_B'(1,\l)\, C_1(\l)
=\sqrt\l \sin \sqrt\l- \cos \sqrt\l$}
\vskip 1mm
\centerline{$\Psi(\l)=\f_B(1,\l)\,C_0(\l)+\f_B'(1,\l\,) C_1(\l)
=\l \cos \sqrt\l+\sqrt\l \sin \sqrt\l,\;\;\;\l\in\bC$}
\vskip 0.5mm
\noindent defines a meromorphic  Nevanlinna function $m$ such that the set $EV$ of all eigenvalues of the problem \eqref{5.67}, \eqref{5.68} coincides with the set  of all poles of $m$. Let $Z_\Phi$ and $Z_\Psi$ be the sets of all real zeros of $\Phi$ and $\Psi$ respectively. Assume also that
\begin{gather*}
S=\{s_k\}_0^\infty, \quad 0=s_0<s_1<\dots <s_k<\dots
\end{gather*}
is the set of all nonnegative solutions of the equation $s=-{\rm tg}\,s$. It is easy to see that $Z_\Psi=\{s_k^2\}_0^\infty$. Moreover, $Z_\Phi\cap Z_\Psi=\emptyset$ and hence $EV=Z_\Psi=\{s_k^2\}_0^\infty$. Next, the derivative $\Psi'$ is
\begin{gather*}
\Psi'(t)=\tfrac 1 2 (3\cos \sqrt t +\tfrac 1 {\sqrt t}\sin \sqrt t -\sqrt t \sin \sqrt t), \quad t\in(0,\infty).
\end{gather*}
Hence $\Psi'(0)=\lim\limits_{t\to +0}\Psi'(t)=2$ and the equality $\displaystyle  \frac {\sin s_k} {s_k}=-\cos s_k $ yields
\begin{gather*}
\Psi'(s_k^2)=\tfrac 1 2 (2\cos s_k-s_k\sin s_k), \quad k\in\bN.
\end{gather*}
Let $\wh\xi_k=\underset{s_k^2}{\rm res}\,m $. Then $\wh\xi_k=\displaystyle {{\Phi(s_k^2)}\over{\Psi' (s_k^2)}}$ and, consequently, $\wh\xi_0=- \tfrac 1 2$,
\begin{gather*}
\wh\xi_k=\frac{2(s_k\sin s_k-\cos s_k)}{2\cos s_k-s_k\sin s_k}=
\frac{2(s_k {\rm tg}\,s_k -1)}{2-s_k {\rm tg}\,s_k}=-2 \frac {s_k^2+1}{s_k^2+2}, \quad k\in\bN.
\end{gather*}
Hence  by \eqref{1.12.1} the eigenfunctions $y_k$ in the expansion \eqref{1.4} are
\begin{gather*}
y_0(x)=\frac 1 2 \wh y_0, \qquad y_k(x)=2\frac{s_k^2+1}{s_k^2+2}\wh y_k \cos (s_k x), \quad k\in\bN,
\end{gather*}
where $\wh y_k$ are the Fourier coefficients $y$:
\begin{gather*}
\wh y_0=\int_{[0,1]} y(x)\,dx, \qquad \wh y_k=\int_{[0,1]} y(x) \cos (s_k x)\,dx.
\end{gather*}
Note also that the  Nevanlinna function $\tau$ defined before Theorem \ref{th1.3} is $\tau(\l)=\l$. Hence $\cB_\infty=1$ and consequently Case 1 in Theorem \ref{th1.3} holds.

Now applying Theorems \ref{th1.2}, \ref{th1.3} and Remark \ref{rem1.4} we arrive at the following assertion.
\begin{assertion}\label{ass5.25}
Each function $y\in L^2([0,1])$ admits the representation
\begin{gather}\label{5.70}
y(x)=b_0+\sum_{k=1}^\infty b_k cos(s_k x),
\end{gather}
where $\{s_k\}_1^\infty \; (s_k<s_{k+1})$ is the  set of all positive solutions of the equation $s=-{\rm tg}\, s$ and
\begin{gather*}
b_0=\frac 1 2 \int_{[0,1]} y(x)\, dx, \qquad b_k=2\frac {s_k^2+1}{s_k^2+2} \int_{[0,1]} y(x) \cos (s_k x)\, dx
\end{gather*}
(the series in \eqref{5.70} converges in $L^2([0,1])$). If in addition $y,y'\in AC ([0,1]), \; y''\in L_2 ([0,1])$ and $y'(0)=0, \; y(b)=0$, then the series in  \eqref{5.70} converges uniformly on $[0,1]$.
\end{assertion}


\begin{thebibliography}{DHS}

\bibitem{Atk}
F.V. Atkinson, \textit{Discrete and continuous boundary problems}, Academic
Press, New York, 1963.


\bibitem{BHS}
J. Behrndt, S. Hassi, and H. de Snoo,  \textit{Boundary value problems, Weyl functions, and differential operators}, Birkhäuser, Cham, 2020.

\bibitem{Bin02}
P. Binding and B. \'Curgus, \textit{Form domains and eigenfunction
expansions for differential equations with eigenparameter dependent boundary Conditions}, Canad. J. Math. \textbf{54} (2002), 1142--1164.

\bibitem{Col}
L. Collatz, \textit{Eigenwertaufgaben mit technischen Anwendungen}, Akademische Verlagsgesellschaft Geest \& Portig, Leipzig, 1963.

\bibitem{DM00}
V.A. Derkach, S. Hassi, M.M. Malamud, and H.S.V. de Snoo,\textit {
Generalized resolvents of symmetric operators and admissibility},
Methods of Functional Analysis and Topology \textbf{6} (2000),
no.~3 , 24--55.


\bibitem{DM17}
V. A. Derkach and M. M. Malamud, \textit {Extension theory of symmetric operators and boundary value problems}, in Proceedings of Institute of Mathematics  NAS of Ukraine, V.104, Institute of
Mathematics  NAS of Ukraine, Kyiv, 2017.

\bibitem{Dij80}
A. Dijksma, \textit{Eigenfunction expansions for a class of J-selfadjoint ordinary differential operators with boundary conditions containing the eigenvalue parameter}, Proc. Roy. Soc. Edinburgh Sect.A  \textbf{86}(1980), no.~1-2, 1--27.


\bibitem{DijLan20}
A. Dijksma and H. Langer,\textit{Self-adjoint extensions of a symmetric linear relation with finite defect:
compressions and Straus subspaces}, Operator Theory: Advances and Applications \textbf{280} (2020)

\bibitem{DunSch}
N. Dunford and J.T. Schwartz, \textit{Linear operators. Part2. Spectral
theory}, Interscience Publishers, New York-London, 1963.


\bibitem{Ful77}
C. T. Fulton, \textit {Two-point boundary value problems with eigenvalue parameter contained in the
boundary conditions},  Proc. Roy. Soc. Edinburgh Sect. A \textbf{77}(1977), 293--308.

\bibitem{GorGor}
V.I.~Gorbachuk and M.L.~Gorbachuk, \textit{Boundary problems for
differential-operator equations}, Kluver Acad. Publ.,
Dordrecht-Boston-London, 1991. (Russian edition: Naukova Dumka,
Kiev, 1984).

\bibitem{Hin79}
D. B. Hinton, \textit {An expansion theorem for an eigenvalue problem with eigenvalue parameter in the
boundary condition}, Quart. J. Math. Oxford Ser. (2) \textbf{30} (1979), 33--42.

\bibitem{Hol85}
A.M.~Khol'kin,\, \textit{Description of selfadjoint extensions of differential operators of an arbitrary order on the infinite interval in the absolutely indefinite case},  Teor. Funkcii Funkcional. Anal. Prilozhen. \textbf{44}
(1985), 112--122.

\bibitem {KreLan71}
M.G.~Krein and H.~Langer,\textit{On defect subspaces and
generalized resolvents of a Hermitian operator in the space $\Pi_\kappa$}, Funct. Anal. Appl. \textbf{5} (1971/1972), 136--146,
217--228

\bibitem{LanTex77}
H.~Langer and B.~Textorious, \textit{On generalized resolvents and
$Q$-functions of symmetric linear relations (subspaces) in Hilbert
space}, Pacif. J. Math.  \textbf{72}(1977), no.~1 , 135--165.


\bibitem{LesMal03}
M. Lesch and  M.M. Malamud, \textit{On the deficiency indices and self-adjointness
of symmetric Hamiltonian systems}, J. Differential Equations \textbf{189}
(2003), 556--615.

\bibitem {Mal92}
 M. M. ~Malamud, \textit{On the formula of generalized resolvents of a nondensely defined Hermitian operator}, Ukr. Math. Zh.
\textbf{44}(1992), no.~ 12, 1658--1688.

\bibitem{Mog15}
V.I.Mogilevskii, \textit{Spectral and pseudospectral  functions of  Hamiltonian systems: de\-ve\-lop\-ment of the results by Arov-Dym and Sakhnovich},  Methods  Funct. Anal.  Topology
\textbf{21} (2015), no. 4, 370--402.


\bibitem{Mog19}
V.I.Mogilevskii,\, \textit{ On compressions of self-adjoint extensions of a symmetric linear relation}, Integr. Equ. Oper. Theory (2019) 91:9.

\bibitem{Mog20}
V.I.Mogilevskii,\, \textit{On uniform convergence of the inverse Fourier transform for differential equations and Hamiltonian systems with degenerating  weight}, arXiv:2002.02502v1 [math.FA] 6 Feb 2020.


\bibitem{Nai}
M.A.~Naimark, \textit{Linear differential operators, vol. 1 and 2}, Harrap, London, 1968.



\bibitem{Sht70}
A. V. Shtraus, \textit{Extensions and generalized resolvents of a symmetric operator which is not densely defined}, Izv. Akad. Nauk SSSR. Ser. Mat. \textbf{34} (1970),no.1, 175-202. (Russian); English translation: Mathematics of the USSR-Izvestiya, \textbf{4} (1970), no. 1, 179--208.

\bibitem{Wal73}
J.Walter, \textit{Regular eigenvalue problems with eigenvalue parameter in the boundary condition},
Math. Z. \textbf{133} (1973), 301--312.


\bibitem{Wei}
J. Weidmann, \textit{Spectral theory of ordinary differential
operators}, Lecture notes in mathematics, \textbf{1258},
Springer-Verlag,  Berlin, 1987.

\end{thebibliography}
\end{document}